\newcommand{\norm}[1]{\ensuremath{\left\| #1 \right\|}}
\newcommand{\refeqn}[1]{(\ref{eqn:#1})}
\newcommand{\trs}[1]{\mathrm{tr}\ensuremath{[#1]}}
\newcommand{\SO}{\ensuremath{\mathsf{SO(3)}}}
\newcommand{\SE}{\ensuremath{\mathsf{SE(3)}}}
\renewcommand{\Re}{\ensuremath{\mathbb{R}}}
\newcommand{\Sph}{\ensuremath{\mathsf{S}}}
\newcommand{\g}{\ensuremath{\mathfrak{g}}}
\newcommand{\xb}{\mathbf{x}}
\newcommand{\diag}{\mathop{\mathrm{diag}}}
\newcommand{\sat}{\mathop{\mathrm{sat}}}
\newcommand{\satr}{\mathop{\mathrm{sat}}}
\newtheorem{prop}{Proposition}
\newcommand{\Xb}{\mathbf{X}}
\newcommand{\Pb}{\mathbf{P}}
\newcommand{\Mb}{\mathbf{M}}
\newcommand{\Nb}{\mathbf{N}}
\newcommand{\Bb}{\mathbf{B}}
\newcommand{\Gb}{\mathbf{G}}
\newcommand{\zb}{\mathbf{z}}
\title{Stabilization of a Rigid Body Payload with Multiple Cooperative Quadrotors}
\author{Farhad A. Goodarzi
    \affiliation{
	Post Doctoral Fellow\\
	George Washington University\\
	Washington, DC 20052\\
    Email: fgoodarzi@gwu.edu
    }	
}
\author{Taeyoung Lee
    \affiliation{Associate Professor\\
	George Washington University\\
	Washington, DC 20052\\
        Email: tylee@gwu.edu
    }
}
\begin{document}

\maketitle   

%%%%%%%%%%%%%%%%%%%%%%%%%%%%%%%%%%%%%%%%%%%%%%%%%%%%%%%%%%%%%%%%%%%%%%
\begin{abstract}
{\it This paper presents the full dynamics and control of arbitrary number of quadrotor unmanned aerial vehicles (UAV) transporting a rigid body. The rigid body is connected to the quadrotors via flexible cables where each flexible cable is modeled as a system of arbitrary number of serially-connected links. It is shown that a coordinate-free form of equations of motion can be derived for the complete model without any simplicity assumptions that commonly appear in other literature, according to Lagrangian mechanics on a manifold. A geometric nonlinear controller is presented to transport the rigid body to a fixed desired position while aligning all of the links along the vertical direction. A rigorous mathematical stability proof is given and the desirable features of the proposed controller are illustrated by numerical examples and experimental results.
}
\end{abstract}

%%%%%%%%%%%%%%%%%%%%%%%%%%%%%%%%%%%%%%%%%%%%%%%%%%%%%%%%%%%%%%%%%%%%%%
\begin{nomenclature}
\entry{$i=1,\cdots,m$} {m number of quadrotors}
\entry{$n_{i}$} {Number of links in the $i$-th cable}
\entry{$\vec{e}_{1},\vec{e}_{2},\vec{e}_{3}\in\Re^3$} {Inertial frame}
\entry{$\vec{b}_{1},\vec{b}_{2},\vec{b}_{3}\in\Re^3$} {Body-fixed frame of the payload}
\entry{$\vec{b}_{1_i},\vec{b}_{2_i},\vec{b}_{3_i}\in\Re^3$} {Body-fixed frame of the $i$-th quadrotor}
\entry{$m_{i}\in\Re$} {Mass of the $i$-th quadrotor}
\entry{$m_{0}\in\Re$} {Mass of the payload}
\entry{$J_i\in\Re^{3\times 3}$} {Inertia matrix of the $i$-th quadrotor}
\entry{$R_i\in\SO$} {Attitude of the $i$-th quadrotor}
\entry{$\Omega_i\in\Re^3$} {Angular velocity of the $i$-th quadrotor}
\entry{$x_i\in\Re^3$} {Position of the $i$-th quadrotor}
\entry{$v_i\in\Re^3$} {Velocity of the $i$-th quadrotor}
\entry{$g\in\Re$} {Gravitational acceleration}
\entry{$\SO$} {Special Orthogonal  group}
\entry{$\SE$} {Special Euclidean group}
\end{nomenclature}

%%%%%%%%%%%%%%%%%%%%%%%%%%%%%%%%%%%%%%%%%%%%%%%%%%%%%%%%%%%%%%%%%%%%%%
\section{Introduction}

Quadrotor UAVs are being considered for various missions such as Mars surface exploration, search and rescue, and particularly payload transportation. There are various applications for aerial load transportation such as usage in construction, military operations, emergency response, or delivering packages. Load transportation with UAVs can be performed using a cable or by grasping the payload~\cite{kumargrasp2013,kim2013grasping}. There are several limitations for grasping a payload with UAVs such as in situations where the landing area is inaccessible, or, it transporting a heavy/bulky object by multiple quadrotors.

\begin{figure}[h]
\centerline{
\includegraphics[width=0.9\columnwidth]{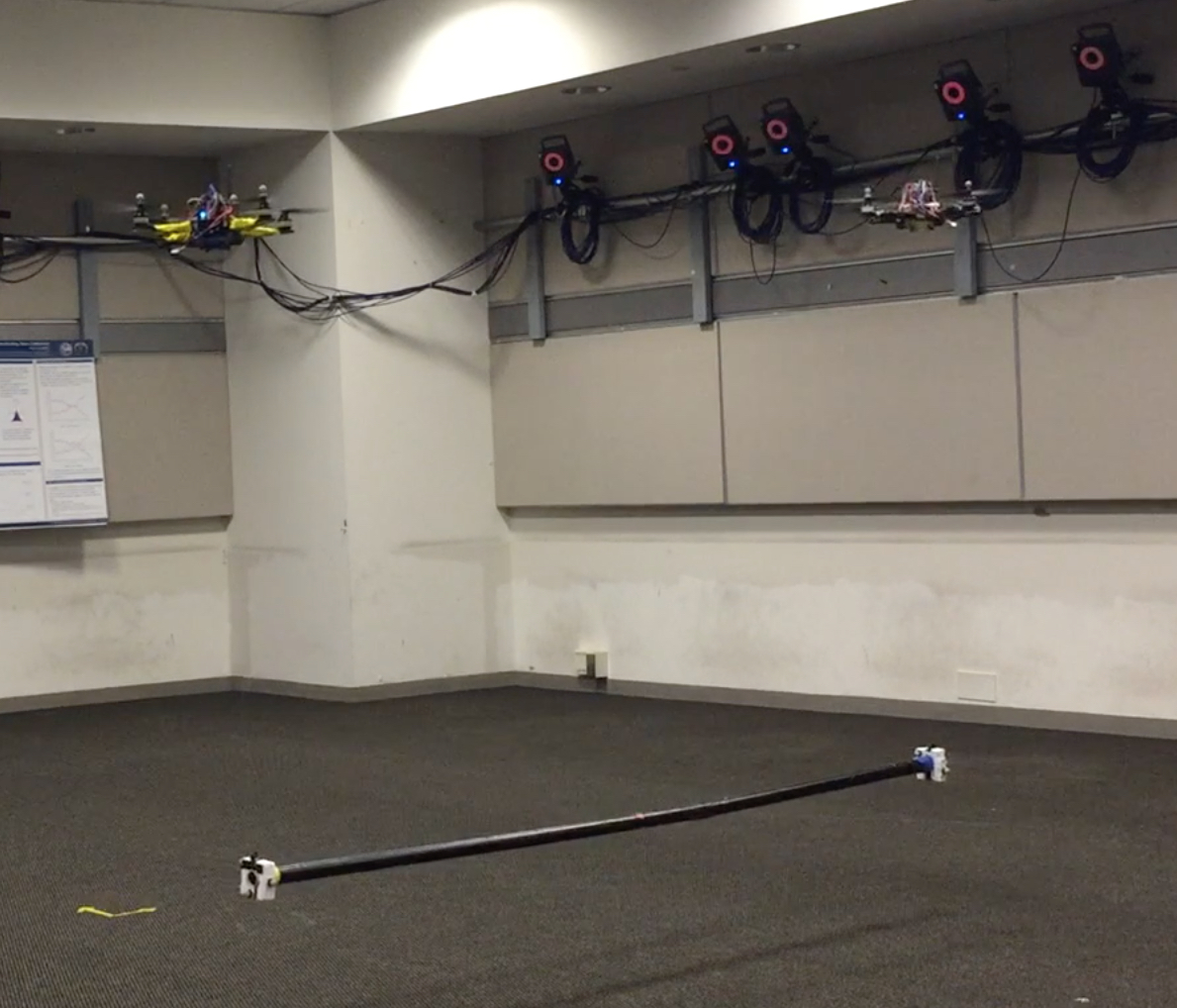}
}
\caption{Two quadrotors stabilizing a payload cooperatively.}\label{fig:fig00}
\end{figure}

Load transportation with the cable-suspended load has been studied traditionally for a helicopter~\cite{CicKanJAHS95,BerPICRA09} or for small unmanned aerial vehicles such as quadrotor UAVs~\cite{PalCruIRAM12,MicFinAR11,MazKonJIRS10}. 

In most of the prior works, the dynamics of aerial transportation has been simplified due to the inherent dynamic complexities. For example, it is assumed that the dynamics of the payload is considered completely decoupled from quadrotors, and the effects of the payload and the cable are regarded as arbitrary external forces and moments exerted to the quadrotors~\cite{ ZamStaJDSMC08, SchMurIICRA12, PalFieIICRA12}, thereby making it challenging to suppress the swinging motion of the payload actively, particularly for agile aerial transportations.

Recently, the coupled dynamics of the payload or cable has been explicitly incorporated into control system design~\cite{LeeSrePICDC13}. In particular, a complete model of a quadrotor transporting a payload modeled as a point mass, connected via a flexible cable is presented, where the cable is modeled as serially connected links to represent the deformation of the cable~\cite{gooddaewontaeyoungacc14,IJCAS2015}. In these studies the payload simplified and considered as a point mass without the attitude and the moment of inertia. In another study, multiple quadrotors transporting  a rigid body payload has been studied~\cite{LeeMultipleRigid14}, but it is assumed that the cables connecting the rigid body payload and quadrotors are always taut. These assumptions and simplifications in the dynamics of the system reduce the stability of the controlled system, particularly in rapid and aggressive load transportation where the motion of the cable and payload is excited nontrivially.

The other critical issue in designing controllers for quadrotors is that they are mostly based on local coordinates. Some aggressive maneuvers are demonstrated at~\cite{MelMicIJRR12} based on Euler angles. However they involve complicated expressions for trigonometric functions, and they exhibit singularities in representing quadrotor attitudes, thereby restricting their ability to achieve complex rotational maneuvers significantly. A quaternion-based feedback controller for attitude stabilization was shown in~\cite{TayMcGITCSTI06}. By considering the Coriolis and gyroscopic torques explicitly, this controller guarantees exponential stability. Quaternions do not have singularities but, as the three-sphere double-covers the special orthogonal group, one attitude may be represented by two antipodal points on the three-sphere. This ambiguity should be carefully resolved in quaternion-based attitude control systems, otherwise they may exhibit unwinding, where a rigid body unnecessarily rotates through a large angle even if the initial attitude error is small~\cite{BhaBerSCL00}. To avoid these, an additional mechanism to lift attitude onto the unit-quaternion space is introduced~\cite{MaySanITAC11}.

Recently, the dynamics of a quadrotor UAV is globally expressed on the special Euclidean group, $\SE$, and nonlinear control systems are developed to track outputs of several flight modes~\cite{farhadASME15}. There are also several studies using the estimations for dynamical objects developed on the special Euclidean group~\cite{Izadi2013DSCC,CDC151}. Several aggressive maneuvers of a quadrotor UAV are demonstrated based on a hybrid control architecture, and a nonlinear robust control system is also considered in~\cite{LeeLeoPACC12,Farhad2013}. As they are directly developed on the special Euclidean/Orthogonal group, complexities, singularities, and ambiguities associated with minimal attitude representations or quaternions are completely avoided~\cite{ChaSanICSM11,CDC152}. The proposed control system is particularly useful for rapid and safe payload transportation in complex terrain, where the position of the payload should be controlled concurrently while suppressing the deformation of the cables.

Comparing with the prior work of the authors in~\cite{farhadacc15,LeeLeoAJC13,farhadthesisphd} and other existing studies, this paper is the first study considering a complete model which includes a rigid body payload with attitude, arbitrary number of quadrotors, and flexible cables. A rigorous mathematical stability analysis is presented, and numerical and experimental validations in presence of uncertainties and disturbances are provided. More explicitly, we present the complete dynamic model of an arbitrary number of quadrotors transporting a rigid body where each quadrotor is connected to the rigid body via a flexible cable. Each flexible cable is modeled as an arbitrary number of serially connected links, and it is valid for various masses and lengths. A coordinate free form of equations of motion is derived according to Lagrange mechanics on a nonlinear manifold for the full dynamic model. These sets of equations of motion are presented in a complete and organized manner without any restrictive assumption or simplification.

Another contribution of this study is designing a control system to stabilize the rigid body at desired position. Geometric nonlinear controllers presented and generalized for the presented model. More explicitly, we show that the rigid body payload is asymptotically transported into a desired location, while aligning all of the links along the vertical direction corresponding to a hanging equilibrium. This paper presents a rigorous Lyapunov stability analysis for the proposed controller to establish stability properties without any timescale separation assumptions or singular perturbation, and a nonlinear integral control term is designed to guarantee robustness against unstructured uncertainties in both rotational and translational dynamics.

In short, new contributions and the unique features of the dynamics model and control system proposed in this paper compared with other studies are as follows: (i) it is developed for the full dynamic model of arbitrary number of multiple quadrotor UAVs on $\SE$ transporting a rigid body connected via flexible cables, including the coupling effects between the translational dynamics and the rotational dynamics on a nonlinear manifold, (ii) the control systems are developed directly on the nonlinear configuration manifold in a coordinate-free fashion.  Thus, singularities of local parameterization are completely avoided to generate agile maneuvers in a uniform way, (iii) a rigorous Lyapunov analysis is presented to establish stability properties without any timescale separation assumption, and (iv) an integral control term is proposed to guarantee asymptotical convergence of tracking error variables in the presence of uncertainties, (v) the proposed algorithm is validated with experiments for payload transportation with multiple cooperative quadrotor UAVs. A rigorous and complete mathematical analysis for multiple quadrotor UAVs transporting a payload on $\SE$ with experimental validations for payload transportation maneuvers is unprecedented. 

This paper is organized as follows. A dynamic model is presented and the problem is formulated at Section II. Control systems are constructed at Sections III and IV, which are followed by numerical examples in Section V. Finally, experimental results are presented in Section VI.
%%%%%%%%%%%%%%%%%%%%%%%%%%%%%%%%%%%%%%%%%%%%%%%%%%%%%%%%%%%%%%%%%%%%%%

\section{Problem Formulation}

Consider a rigid body with the mass $m_{0}\in\Re$ and the moment of inertia $J_{0}\in\Re^{3\times 3}$, being transported with $n$ arbitrary number of quadrotors as shown in Figure~\ref{fig:fig1}. The location of the mass center of the rigid body is denoted by $x_{0}\in\Re^{3}$, and its attitude is given by $R_{0}\in\SO$, where the special orthogonal group is given by $\SO=\{R\in\Re^{3\times 3} \mid R^{T}R=I,\det(R)=1\}$. We choose an inertial frame $\{\vec{e}_{1},\vec{e}_{2},\vec{e}_{3}\}$ and body fixed frame $\{\vec{b}_{1},\vec{b}_{2},\vec{b}_{3}\}$ attached to the payload. We also consider a body fixed frame attached to the $i$-th quadrotor $\{\vec{b}_{1_i},\vec{b}_{2_i},\vec{b}_{3_i}\}$. In the inertial frame, the third axes $\vec{e}_{3}$ points downward along the gravity and the other axes are chosen to form an orthonormal frame. 

\begin{figure}[h]
\centerline{
	\setlength{\unitlength}{0.09\columnwidth}\scriptsize
\begin{picture}(5,8.8)(0,0)
\put(0,0){\includegraphics[width=.6\columnwidth]{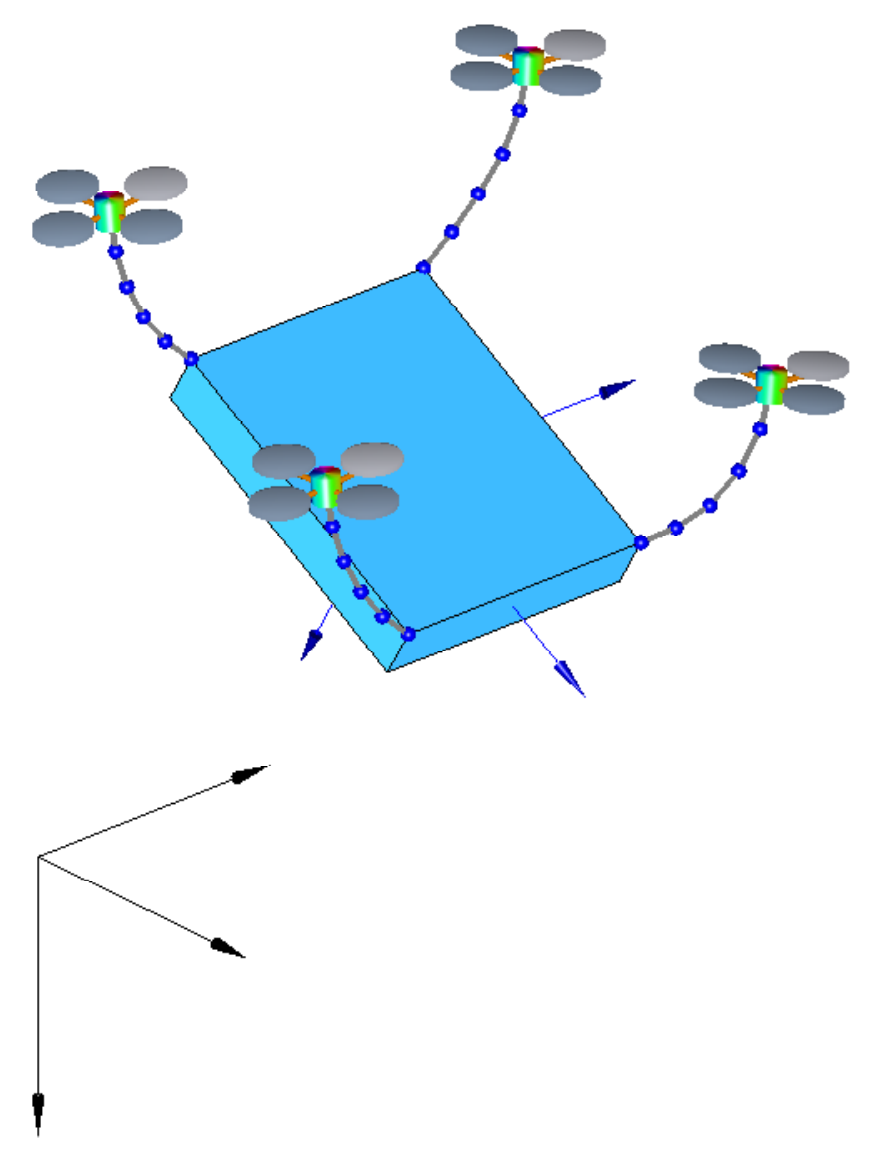}}
\put(3.6,5.3){\shortstack[c]{$m_{0}$}}
\put(3.6,4.9){\shortstack[c]{$J_{0}$}}
\put(-0.4,7.3){\shortstack[c]{$m_{1}$}}
\put(0.3,6.2){\shortstack[c]{$m_{1j}$}}
\put(-0.4,6.9){\shortstack[c]{$J_{1}$}}
\put(4.9,8.5){\shortstack[c]{$m_{2}$}}
\put(3.9,7.1){\shortstack[c]{$m_{2j}$}}
\put(4.9,8.1){\shortstack[c]{$J_{2}$}}
\put(6.7,6.0){\shortstack[c]{$m_{3}$}}
\put(5.9,5.0){\shortstack[c]{$m_{3j}$}}
\put(6.7,5.6){\shortstack[c]{$J_{3}$}}
\put(2.2,2.8){\shortstack[c]{$e_{1}$}}
\put(1.9,1.3){\shortstack[c]{$e_{2}$}}
\put(0.1,-0.1){\shortstack[c]{$e_{3}$}}
\put(4.9,5.9){\shortstack[c]{$b_{1}$}}
\put(4.6,3.2){\shortstack[c]{$b_{2}$}}
\put(2.0,3.4){\shortstack[c]{$b_{3}$}}
\end{picture}
}
\caption{Quadrotor UAVs with a rigid body payload. Cables are modeled as a serial connection of an arbitrary number of links (only 4 quadrotors with 5 links in each cable are illustrated).}\label{fig:fig1}
\end{figure}

The mass and the moment of inertia of the $i$-th quadrotor are denoted by $m_{i}\in\Re$ and $J_{i}\in\Re^{3\times 3}$ respectively. The cable connecting each quadrotor to the rigid body is modeled as an arbitrary numbers of links for each quadrotor with varying masses and lengths. The direction of the $j$-th link of the $i$-th quadrotor, measured outward from the quadrotor toward the payload is defined by the unit vector $q_{ij}\in\Sph^2$, where $\Sph^2=\{q\in\Re^{3}\mid\|q\|=1\}$, where the mass and length of that link is denoted with $m_{ij}$ and $l_{ij}$ respectively. The number of links in the cable connected to the $i$-th quadrotor is defined as $n_{i}$. The configuration manifold for this system is given by $\SO\times \Re^{3}\times (\SO^{n})\times (\Sph^2)^{\sum_{i=1}^{n}n_{i}}$. 

The $i$-th quadrotor can generate a thrust force of $-f_{i}R_{i}e_{3}\in\Re^{3}$ with respect to the inertial frame, where $f_{i}\in\Re$ is the total thrust magnitude of the $i$-th quadrotor. It also generates a moment $M_{i}\in\Re^{3}$ with respect to its body-fixed frame. Also we define $\Delta_{x_i}$ and $\Delta_{R_i}\in\Re^3$ as fixed disturbances applied to the $i$-th quadrotor's translational and rotational dynamics respectively. It is also assumed that an upper bound of the infinite norm of the uncertainty is known
\begin{align}\label{eqn:disturbancecond}
\|\Delta_{x}\|_{\infty}\leq \delta,
\end{align}
for a positive constant $\delta$. Throughout this paper, the two norm of a matrix $A$ is denoted by $\|A\|$. The standard dot product is denoted by $x\cdot y=x^{T}y$ for any $x,y\in\Re^{3}$.
\subsection{Lagrangian}
The kinematics equations for the links, payload, and quadrotors are given by
\begin{gather}
\dot{q}_{ij}=\omega_{ij}\times q_{ij}=\hat{\omega}_{ij}q_{ij},\\
\dot{R}_{0}=R_{0}\hat{\Omega}_{0},\\
\dot{R}_{i}=R_{i}\hat{\Omega}_{i},
\end{gather}
where $\omega_{ij}\in\Re^{3}$ is the angular velocity of the $j$-th link in the $i$-th cable satisfying $q_{ij}\cdot\omega_{ij}=0$. Also, $\Omega_{0}\in\Re^{3}$ is the angular velocity of the payload and $\Omega_{i}\in\Re^{3}$ is the angular velocity of the $i$-th quadrotor, expressed with respect to the corresponding body fixed frame. The \textit{hat map} $\hat\cdot:\Re^3\rightarrow\SO$ is defined by the condition that $\hat x y=x\times y$ for all $x,y\in\Re^3$. More explicitly, for a vector $a=[a_1,a_2,a_3]^{T}\in\Re^3$, the matrix $\hat a$ is given by
\begin{align}
    \hat a = \begin{bmatrix} 0 & -a_3 & a_2\\
                                a_3 & 0 & -a_1\\
                                -a_2 & a_1 & 0 \end{bmatrix}\label{eqn:hat}.
\end{align}
This identifies the Lie algebra $\SO$ with $\Re^3$ using the vector cross product in $\Re^3$. The inverse of the hat map is denoted by the \textit{vee} map, $\vee:\SO\rightarrow\Re^3$. The position of the $i$-th quadrotor is given by
\begin{align}\label{eqn:xi}
x_{i}=x_{0}+R_{0}\rho_{i}-\sum_{a=1}^{n_{i}}{l_{ia}q_{ia}},
\end{align}
where $\rho_{i}\in\Re^{3}$ is the vector from the center of mass of the rigid body to the point that $i$-th cable is connected to the rigid body. Similarly the position of the $j$-th link in the cable connecting the $i$-th quadrotor to the rigid body is given by
\begin{align}\label{eqn:xij}
x_{ij}=x_{0}+R_{0}\rho_{i}-\sum_{a=j+1}^{n_{i}}{l_{ia}q_{ia}}.
\end{align}

We derive equations of motion according to Lagrangian mechanics. Total kinetic energy of the system is given by
\begin{align}
T=&\frac{1}{2}m_{0}\|\dot{x}_{0}\|^{2}+\sum_{i=1}^{n}\sum_{j=1}^{n_{i}}{\frac{1}{2}m_{ij}\|\dot{x}_{ij}\|^{2}}+\frac{1}{2}\sum_{i=1}^{n}{m_{i}\|\dot{x}_{i}\|^{2}}\nonumber\\
&+\frac{1}{2}\sum_{i=1}^{n}{\Omega_{i}\cdot J_{i}\Omega_{i}}+\frac{1}{2}\Omega_{0}\cdot J_{0}\Omega_{0}.
\end{align}
The gravitational potential energy is given by
\begin{align}
V=-m_{0}ge_{3}\cdot x_{0}-\sum_{i=1}^{n}{m_{i}ge_{3}}\cdot x_{i}-\sum_{i=1}^{n}\sum_{j=1}^{n_{i}}{m_{ij}ge_{3}}\cdot x_{ij},
\end{align}
where it is assumed that the unit-vector $e_{3}$ points downward along the gravitational acceleration as shown at Figure \ref{fig:fig1}. The corresponding Lagrangian of the system is $L=T-V$.

\subsection{Euler-Lagrange equations}
Coordinate-free form of Lagrangian mechanics on the two-sphere $\Sph^2$ and the special orthogonal group $\SO$ for various multi-body systems has been studied in~\cite{Lee08,LeeLeoIJNME08}. The key idea is representing the infinitesimal variation of $R_i\in\SO$ in terms of the exponential map
\begin{align}
\delta R_{i} = \frac{d}{d\epsilon}\bigg|_{\epsilon = 0} R_{i}\exp(\epsilon \hat\eta_{i}) = R_{i}\hat\eta_{i},\label{eqn:delR}
\end{align}
for $\eta_{i}\in\Re^3$. The corresponding variation of the angular velocity is given by $\delta\Omega_{i}=\dot\eta_{i}+\Omega_{i}\times\eta_{i}$. Similarly, the infinitesimal variation of $q_{ij}\in\Sph^2$ is given by
\begin{align}
\delta q_{ij} = \xi_{ij}\times q_{ij},\label{eqn:delqi}
\end{align}
for $\xi_{ij}\in\Re^3$ satisfying $\xi_{ij}\cdot q_{ij}=0$. Using these, we obtain the following Euler-Lagrange equations.

\begin{prop}\label{prop:FDM}
The equations of motion for the proposed payload transportation system are as follows
\begin{gather}\label{eqn:EOM}
M_{T}\ddot{x}_{0}-\sum_{i=1}^{n}\sum_{j=1}^{n_{i}}{M_{0ij}l_{ij}\ddot{q}_{ij}}-\sum_{i=1}^{n}{M_{iT}R_{0}\hat{\rho}_{i}\dot{\Omega}_{0}} \nonumber\\
=M_{T}ge_{3}+\sum_{i=1}^{n}{(-f_{i}R_{i}e_{3}+\Delta_{x_i})}-\sum_{i=1}^{n}{M_{iT}R_{0}\hat{\Omega}_{0}^2 \rho_{i}},\label{eqn:EOMM1}\\
\bar{J}_{0}\dot{\Omega}_{0}+\sum_{i=1}^{n}{M_{iT}\hat{\rho}_{i}R_{0}^{T}\ddot{x}_{0}}-\sum_{i=1}^{n}\sum_{j=1}^{n_{i}}{M_{0ij}l_{ij}\hat{\rho}_{i}R_{0}^{T}\ddot{q}_{ij}} \nonumber\\
=\sum_{i=1}^{n}{\hat{\rho}_{i}R_{0}^{T}(-f_{i}R_{i}e_{3}+M_{iT}ge_{3}+\Delta_{x_i})}-\hat{\Omega}_{0}\bar{J}_{0}\Omega_{0},\label{eqn:EOMM2}\\
\sum_{k=1}^{n_{i}}{M_{0ij}l_{ik}\hat{q}_{ij}^{2}\ddot{q}_{ik}}-M_{0ij}\hat{q}_{ij}^{2}\ddot{x}_{0}+M_{0ij}\hat{q}_{ij}^{2}R_{0}\hat{\rho}_{i}\dot{\Omega}_{0} \nonumber\\
=M_{0ij}\hat{q}_{ij}^{2}R_{0}\hat{\Omega}_{0}^{2}\rho_{i}-\hat{q}_{ij}^{2}(M_{0ij}ge_{3}-f_{i}R_{i}e_{3}+\Delta_{x_i}),\label{eqn:EOMM3}\\
J_{i}\Omega_{i}+\Omega_{i}\times J_{i}\Omega_{i}=M_{i}+\Delta_{R_{i}}\label{eqn:EOMM4}.
\end{gather}
Here the total mass $M_{T}$ of the system and the mass of the $i$-th quadrotor and its flexible cable $M_{iT}$ are defined as
\begin{gather}
M_{T}=m_{0}+\sum_{i=1}^{n}M_{iT},\; M_{iT}=\sum_{j=1}^{n_{i}}{m_{ij}}+m_{i},\label{eqn:def1}
\end{gather}
and the constants related to the mass of links are given as
\begin{align}
M_{0ij}&=m_{i}+\sum_{a=1}^{j-1}{m_{ia}}\label{eqn:def3},
\end{align}
The equations of motion can be rearranged in a matrix form as follow
\begin{align}
\Nb\dot{\Xb}=\Pb
\end{align}
where the state vector $X\in\Re^{D_{X}}$ with $D_{X}=6+3\sum_{i=1}^{n}n_{i}$ is given by
\begin{align}
\Xb=[\dot{x}_{0},\; {\Omega}_{0},\; \dot{q}_{1j},\; \dot{q}_{2j},\; \cdots,\; \dot{q}_{nj}]^{T},
\end{align}
and matrix $\Nb\in\Re^{D_{X}\times D_{X}}$ is defined as
\begin{align}\label{eqn:EOM11}
\Nb=\begin{bmatrix}
M_{T}I_{3}&\Nb_{x_{0}\Omega_{0}}&\Nb_{x_{0}1}&\Nb_{x_{0}2}&\cdots&\Nb_{x_{0}n}\\
\Nb_{\Omega_{0} x_{0}}&\bar{J}_{0}&\Nb_{\Omega_{0}1}&\Nb_{\Omega_{0}2}&\cdots&\Nb_{\Omega_{0}n}\\
\Nb_{1 x_{0}}&\Nb_{1\Omega_{0}}&\Nb_{qq1}&0&\cdots&0\\
\Nb_{2 x_{0}}&\Nb_{2\Omega_{0}}&0&\Nb_{qq2}&\cdots&0\\
\vdots&\vdots&\vdots&\vdots&\vdots&\vdots\\
\Nb_{n x_{0}}&\Nb_{n\Omega_{0}}&0&0&\cdots&\Nb_{qqn}
\end{bmatrix},
\end{align}
where the sub-matrices are defined as
\begin{gather}
\Nb_{x_{0}\Omega_{0}}=-\sum_{i=1}^{n}{M_{iT}R_{0}\hat{\rho}_{i}};\; \Nb_{\Omega_{0} x_{0}}=\Mb_{x_{0}\Omega_{0}}^{T},\nonumber\\
\Nb_{x_{0}i}=-[M_{0i1}l_{i1}{I}_{3},\; M_{0i2}l_{i2}{I}_{3},\; \cdots,\;M_{0in_{i}}l_{in_{i}}{I}_{3}],\nonumber\\
\Nb_{\Omega_{0}i}=-[M_{0i1}l_{i1}\hat{\rho}_{i}R_{0}^{T},\; M_{0i2}l_{i2}\hat{\rho}_{i}R_{0}^{T},\; \cdots,\; M_{0in_{i}}l_{in_{i}}\hat{\rho}_{i}R_{0}^{T}],\nonumber\\
\Nb_{ix_{0}}=-[M_{0i1}\hat{q}_{i1}^{2},\; M_{0i2}\hat{q}_{i2}^{2},\; \cdots,\;M_{0in_{i}}\hat{q}_{in_{i}}^{2}]^{T},\nonumber\\
\Nb_{i\Omega_{0}}=[M_{0i1}\hat{q}_{i1}^{2}R_{0}\hat{\rho}_{i},\; M_{0i2}\hat{q}_{i2}^{2}R_{0}\hat{\rho}_{i},\; \cdots,\; M_{0in_{i}}\hat{q}_{in_{i}}^{2}R_{0}\hat{\rho}_{i}]^{T},
\end{gather}
and the sub-matrix $\Nb_{qqi}\in\Re^{3n_i\times 3n_i}$ is given by
\begin{align}\Nb_{qqi}=
\begin{bmatrix}
-M_{011}l_{i1}I_{3}&M_{012}l_{i2}\hat{q}_{i2}^2&\cdots&M_{01n_{i}}l_{in_{i}}\hat{q}_{in_{i}}^2\\
M_{021}l_{i1}\hat{q}_{i1}^2&-M_{022}l_{i2}I_{3}&\cdots&M_{02n_{i}}l_{in_{i}}\hat{q}_{in_{i}}^2\\
\vdots&\vdots&&\vdots\\
M_{0n_{i}1}l_{i1}\hat{q}_{i1}^2&M_{0n_{i}2}l_{i2}\hat{q}_{i2}^2&\cdots&-M_{0n_{i}n_{i}}l_{in_{i}}I_{3}
\end{bmatrix}.
\end{align}
The $\Pb\in\Re^{D_{X}}$ matrix is
\begin{align}
\Pb=[P_{x_{0}},\; P_{\Omega_{0}},\; P_{1j},\; P_{2j},\; \cdots,\; P_{nj}]^{T},
\end{align}
and sub-matrices of $\Pb$ matrix are also defined as 
\begin{align*}
P_{x_{0}}&=M_{T}ge_{3}+\sum_{i=1}^{n}{(-f_{i}R_{i}e_{3}+\Delta_{x_i})}-\sum_{i=1}^{n}{M_{iT}R_{0}\hat{\Omega}_{0}^2\rho_{i}},\\
P_{\Omega_{0}}&=-\hat{\Omega}_{0}\bar{J}_{0}\Omega_{0}+\sum_{i=1}^{n}{\hat{\rho}_{i}R_{0}^T(M_{iT}ge_{3}-f_{i}R_{i}e_{3}+\Delta_{x_i})},\nonumber\\
P_{ij}=&-\hat{q}_{ij}^2(-f_{i}R_{i}e_{3}+M_{0ij}ge_{3}+\Delta_{x_i})+M_{0ij}\hat{q}_{ij}^2 R_{0}\hat{\Omega}_{0}^2 \rho_{i}\\
&+M_{0ij}\|\dot{q}_{ij}\|^{2}q_{ij}.\nonumber
\end{align*}
\end{prop}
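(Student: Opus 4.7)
The plan is to derive the equations from Hamilton's variational principle $\delta\int_{t_0}^{t_1}(T-V)\,dt + \int_{t_0}^{t_1}\delta W\,dt = 0$ applied to the Lagrangian $L=T-V$ already assembled in the excerpt, using the admissible variations specified in \refeqn{delR} and \refeqn{delqi}. The independent configuration variables are $x_0$, $R_0$, $\{R_i\}$, and $\{q_{ij}\}$, so after integration by parts the coefficients of each of $\delta x_0$, $\eta_0$, $\eta_i$, and $\xi_{ij}$ (the latter constrained by $\xi_{ij}\cdot q_{ij}=0$) must vanish independently, producing \refeqn{EOMM1}, \refeqn{EOMM2}, \refeqn{EOMM4}, and \refeqn{EOMM3} respectively. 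The virtual work contribution is $\delta W=\sum_i(-f_iR_ie_3+\Delta_{x_i})\cdot\delta x_i+\sum_i(M_i+\Delta_{R_i})\cdot\eta_i$, since the thrust acts at the quadrotor position $x_i$ and the control/disturbance moment acts in the body frame.

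The first major step is to express $\delta x_i$ and $\delta x_{ij}$ in terms of the primitive variations by differentiating \refeqn{xi} and \refeqn{xij}. Using $\delta R_0=R_0\hat\eta_0$ one obtains $\delta(R_0\rho_i)=-R_0\hat\rho_i\eta_0$, and using $\delta q_{ia}=\xi_{ia}\times q_{ia}$ the sum of link contributions becomes linear in the $\xi_{ia}$. Substituting into the kinetic and potential energies, then integrating by parts in time (using $\delta\dot{x}_0=\frac{d}{dt}\delta x_0$, $\delta\Omega_0=\dot\eta_0+\Omega_0\times\eta_0$, and the analogous identity for $\omega_{ij}$) collects coefficients of $\delta x_0$ and $\eta_0$ that, when set to zero, yield \refeqn{EOMM1} and \refeqn{EOMM2} after repeated use of the identity $\hat a\,b=-\hat b\,a$ and the mass-aggregation definitions \refeqn{def1}, \refeqn{def3}. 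The quadrotor attitude equation \refeqn{EOMM4} decouples and follows from the standard Euler-Poincaré computation on $\SO$ applied to $\frac{1}{2}\Omega_i\cdot J_i\Omega_i$ with virtual work $M_i+\Delta_{R_i}$.

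The step I expect to be the main obstacle is the link equation \refeqn{EOMM3}. The coefficient of $\xi_{ij}$ in the variational identity is an arbitrary vector in $\Re^3$, but $\xi_{ij}$ is constrained to the tangent space $T_{q_{ij}}\Sph^2=\{\xi:\xi\cdot q_{ij}=0\}$, so only the component orthogonal to $q_{ij}$ must vanish. The standard trick is to left-multiply the ambient-space coefficient by the orthogonal projector $I-q_{ij}q_{ij}^T=-\hat q_{ij}^2$, which is precisely why $\hat q_{ij}^2$ appears in front of every term of \refeqn{EOMM3}; one also has to replace $\dot\omega_{ij}\times q_{ij}$ terms by $\ddot q_{ij}+\|\dot q_{ij}\|^2 q_{ij}$, which is where the centripetal-like term $M_{0ij}\|\dot q_{ij}\|^2 q_{ij}$ in $P_{ij}$ enters. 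Care is required to track which prefix sums of link masses contribute to the inertia seen by the $j$-th link: a link at position $j$ carries all quadrotor and upstream link masses, yielding the coefficient $M_{0ij}$ of \refeqn{def3}.

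Finally, the matrix reorganization $\Nb\dot\Xb=\Pb$ is a purely bookkeeping step: having obtained the four vector equations, one groups all terms containing $\ddot x_0$, $\dot\Omega_0$, and each $\ddot q_{ij}$ on the left and everything else on the right. The block structure of $\Nb$ in \refeqn{EOM11} is then read off directly from the coefficients of these second derivatives in \refeqn{EOMM1}-\refeqn{EOMM3}, and the symmetry $\Nb_{\Omega_0 x_0}=\Nb_{x_0\Omega_0}^T$ is a consistency check inherited from the symmetry of the mass matrix of a natural mechanical system. No new physics enters at this stage.
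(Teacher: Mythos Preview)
Your proposal is correct and follows essentially the same route as the paper's own proof: the paper also applies the Lagrange--d'Alembert principle with the variations \refeqn{delR}, \refeqn{delqi}, expands $T$ and $V$ via \refeqn{xi}--\refeqn{xij}, integrates by parts, and collects the coefficients of $\delta x_0$, $\eta_0$, $\xi_{ij}$, $\eta_i$ to obtain \refeqn{EOMM1}--\refeqn{EOMM4}. The only cosmetic difference is that the paper first records the partial derivatives $D_{\dot x_0}L$, $D_{\Omega_0}L$, $D_{\dot q_{ij}}L$, etc.\ explicitly before assembling the Euler--Lagrange equations, and writes the link equation initially with a single $\hat q_{ij}$ (equivalent to your projector $-\hat q_{ij}^2$ after one more cross product); your treatment of the $\Sph^2$ constraint and the origin of the $M_{0ij}\|\dot q_{ij}\|^2 q_{ij}$ term is exactly right.
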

\begin{proof}
See Appendix~\ref{sec:PfFDM}.
\end{proof}
These equations are derived directly on a nonlinear manifold without any simplification. The dynamics of the payload, flexible cables, and quadrotors are considered explicitly, and they avoid singularities and complexities associated to local coordinates. 
 
%%%%%%%%%%%%%%%%%%%%%%%%%%%%%%%%%%%%%%%%%%%%%%%%%%%%%%%%%%%%%%%
%%%%%%%%%%%%%%%%%%%%%%%%%%%%%%%%%%%%%%%%%%%%%%%%%%%%%%%%%%%%%%%
\section{CONTROL SYSTEM DESIGN FOR SIMPLIFIED DYNAMIC MODEL}

\subsection{Control Problem Formulation}

Let $x_{0_{d}}\in\Re^{3}$ be the desired position of the payload. The desired attitude of the payload is considered as $R_{0_d}=I_{3\times 3}$, and the desired direction of links is aligned along the vertical direction. The corresponding location of the $i$-th quadrotor at this desired configuration is given by
\begin{align}
x_{i_d}=x_{0_d}+\rho_{i}-\sum_{a=1}^{n_{i}}{l_{ia}e_{3}}.
\end{align}
We wish to design control forces $f_{i}$ and control moments $M_{i}$ of quadrotors such that this desired configuration becomes asymptotically stable.

\subsection{Simplified Dynamic Model}

Control forces for each quadrotor is given by $-f_{i}R_{i}e_{3}$ for the given equations of motion \refeqn{EOMM1}, \refeqn{EOMM2}, \refeqn{EOMM3}, \refeqn{EOMM4}. As such, the quadrotor dynamics is under-actuated. The total thrust magnitude of each quadrotor can be arbitrary chosen, but the direction of the thrust vector is always along the third body fixed axis, represented by $R_i e_3$. But, the rotational attitude dynamics of the quadrotors are fully actuated, and they are not affected by the translational dynamics of the quadrotors or the dynamics of links. 

Based on these observations, in this section, we simplify the model by replacing the $-f_{i}R_{i}e_{3}$ term by a fictitious control input $u_{i}\in\Re^{3}$, and design an expression for $u_i$ to asymptotically stabilize the desired equilibrium. In other words, we assume that the attitude of the quadrotor can be instantaneously changed. Also $\Delta_{x_i}$ are ignored in the simplified dynamic model. The effects of the attitude dynamics are incorporated at the next section.

\subsection{Linear Control System}

The control system for the simplified dynamic model is developed based on the linearized equations of motion. At the desired equilibrium, the position and the attitude of the payload are given by $x_{0_d}$ and $R_{0_d}=I_{3}$, respectively. Also, we have $q_{{ij}_d}=e_{3}$ and $R_{i_d}=I_{3}$. In this equilibrium configuration, the control input for the $i$-th quadrotor is
\begin{align}
u_{i_d}=-f_{i_d}R_{i_d}e_{3},
\end{align}
where the total thrust is $f_{i_d}=(M_{iT}+\frac{m_{0}}{n})g$.

The variation of $x_{0}$ is given by
\begin{align}\label{eqn:xlin}
\delta x_{0}=x_{0}-x_{0_{d}},
\end{align}
and the variation of the attitude of the payload is defined as
\begin{align*}
\delta R_0 = R_{0_d} \hat\eta_0 = \hat\eta_0,
\end{align*}
for $\eta_0\in\Re^3$. The variation of $q_{ij}$ can be written as
\begin{align}\label{eqn:qlin}
\delta q_{ij}=\xi_{ij}\times e_{3},
\end{align}
where $\xi_{ij}\in\Re^{3}$ with $\xi_{ij}\cdot e_{3}=0$. The variation of $\omega_{ij}$ is given by $\delta \omega_{ij}\in\Re^{3}$ with $\delta\omega_{ij}\cdot e_{3}=0$. Therefore, the third element of each of $\xi_{ij}$ and $\delta\omega_{ij}$ for any equilibrium configuration is zero, and they are omitted in the following linearized equations. The state vector of the linearized equation is composed of $C^{T}\xi_{ij}\in\Re^{2}$, where $C=[e_{1},\;e_{2}]\in\Re^{3\times 2}$. The variation of the control input $\delta u_{i}\in\Re^{3\times 1}$, is given as $\delta u_i = u_i-u_{i_d}$.

\begin{prop}\label{prop:stability1}
The linearized equations of the simplified dynamic model are given by 
\begin{align}\label{eqn:EOMLin}
\Mb\ddot \xb  + \Gb\xb = \Bb \delta u+\g(\xb,\dot{\xb}),
\end{align}
where $\g(\xb,\dot{\xb})$ corresponds to the higher order term and the state vector $\xb\in\Re^{D_{\xb}}$ with $D_{\xb}=6+2\sum_{i=1}^{n}n_{i}$ is given by
\begin{align*}
\xb=\begin{bmatrix}
\delta{x}_{0},\eta_{0},C^{T}{\xi}_{1j},C^T{\xi}_{2j},\cdots,C^{T}{\xi}_{nj}\\
\end{bmatrix},
\end{align*}
and $\delta u=[\delta u_{1}^T,\; \delta u_{2}^T,\;\cdots,\;\delta u_{n}^T]^{T}\in\Re^{3n\times 1}$. The matrix $\Mb\in\Re^{D_{\xb}\times D_{\xb}}$ are defined as
\begin{align*}\Mb=
\begin{bmatrix}
M_{T}I_{3}&\Mb_{x_{0}\Omega_{0}}&\Mb_{x_{0}1}&\Mb_{x_{0}2}&\cdots&\Mb_{x_{0}n}\\
\Mb_{\Omega_{0} x_{0}}&\bar{J}_{0}&\Mb_{\Omega_{0}1}&\Mb_{\Omega_{0}2}&\cdots&\Mb_{\Omega_{0}n}\\
\Mb_{1 x_{0}}&\Mb_{1\Omega_{0}}&\Mb_{qq1}&0&\cdots&0\\
\Mb_{2 x_{0}}&\Mb_{2\Omega_{0}}&0&\Mb_{qq2}&\cdots&0\\
\vdots&\vdots&\vdots&\vdots&\vdots&\vdots\\
\Mb_{n x_{0}}&\Mb_{n\Omega_{0}}&0&0&\cdots&\Mb_{qqn}
\end{bmatrix},
\end{align*}
where the sub-matrices are defined as
\begin{gather}
\Mb_{x_{0}\Omega_{0}}=-\sum_{i=1}^{n}{M_{iT}\hat{\rho}_{i}};\quad \Mb_{\Omega_{0} x_{0}}=\Mb_{x_{0}\Omega_{0}}^{T},\nonumber\\
\Mb_{x_{0}i}=[M_{0i1}l_{i1}\hat{e}_{3}C,\; M_{0i2}l_{i2}\hat{e}_{3}C,\; \cdots,\;M_{0in_{i}}l_{in_{i}}\hat{e}_{3}C],\nonumber\\
\Mb_{\Omega_{0}i}=[M_{0i1}l_{i1}\hat{\rho}_{i}C,\; M_{0i2}l_{i2}\hat{\rho}_{i}C,\; \cdots,\; M_{0in_{i}}l_{in_{i}}\hat{\rho}_{i}C],\nonumber\\
\Mb_{ix_{0}}=-[M_{0i1}C^{T}\hat{e}_{3},\; M_{0i2}C^{T}\hat{e}_{3},\; \cdots,\; M_{0in_{i}}C^{T}\hat{e}_{3}],\\
\Mb_{i\Omega_{0}}=[M_{0i1}C^{T}\hat{e}_{3}\hat{\rho}_{i},\; M_{0i2}C^{T}\hat{e}_{3}\hat{\rho}_{i},\;\cdots,\; M_{0in_{i}}C^{T}\hat{e}_{3}\hat{\rho}_{i}],
\end{gather}
and the sub-matrix $\Mb_{qqi}\in\Re^{2n_i\times 2n_i}$ is given by
\begin{align}\Mb_{qqi}=
\begin{bmatrix}
M_{i11}l_{i1}I_{2}&M_{i12}l_{i2}I_{2}&\cdots&M_{i1n_{i}}l_{in_{i}}I_{2}\\
M_{i21}l_{i1}I_{2}&M_{i22}l_{i2}I_{2}&\cdots&M_{i2n_{i}}l_{in_{i}}I_{2}\\
\vdots&\vdots&&\vdots\\
M_{in_{i}1}l_{i1}I_{2}&M_{in_{i}2}l_{i2}I_{2}&\cdots&M_{in_{i}n_{i}}l_{in_{i}}I_{2}
\end{bmatrix}.
\end{align}
The matrix $\Gb\in\Re^{D_{\xb}\times D_{\xb}}$ is defined as
\begin{align*}
\Gb=\begin{bmatrix}
0&0&0&0&0&0\\
0&\Gb_{\Omega_{0}\Omega_{0}}&0&0&0&0\\
0&0&\Gb_{1}&0&0&0\\
0&0&0&\Gb_{2}&0&0\\
\vdots&\vdots&\vdots&\vdots&\vdots&\vdots\\
0&0&0&0&0&\Gb_{n}
\end{bmatrix},
\end{align*}
where $\Gb_{\Omega_{0}\Omega_{0}}=\sum_{i=1}^{n}\frac{m_{0}}{n}g\hat{\rho}_{i}\hat{e}_{3}$ and the sub-matrices $\Gb_{i}\in\Re^{2n_{i}\times 2n_{i}}$ are
\begin{align*}
\Gb_{i}= \diag[(-M_{iT}-\frac{m_{0}}{n}+M_{0ij})ge_{3}I_{2}].
\end{align*}
The matrix $\Bb\in\Re^{D_{\xb}\times 3n}$ is given by 
\begin{align*}
\Bb=\begin{bmatrix}
I_{3}&I_{3}&\cdots&I_{3}\\
\hat{\rho}_{1}&\hat{\rho}_{2}&\cdots&\hat{\rho}_{n}\\
\Bb_{\Bb}&0&0&0\\
0&\Bb_{\Bb}&0&0\\
\vdots&\vdots&\vdots&\vdots\\
0&0&0&\Bb_{\Bb}
\end{bmatrix},
\end{align*}
where $\Bb_{\Bb}=-[C^{T}\hat{e}_{3},\; C^{T}\hat{e}_{3},\; \cdots,\; C^{T}\hat{e}_{3}]^{T}$.
\end{prop}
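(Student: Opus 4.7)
My plan is to obtain the linearized model by computing the first-order variation of the three nonlinear equations \refeqn{EOMM1}, \refeqn{EOMM2}, \refeqn{EOMM3} from Proposition~\ref{prop:FDM} about the hanging equilibrium $(x_{0},R_{0},q_{ij},\Omega_{0},\omega_{ij})=(x_{0_d},I_{3},e_{3},0,0)$, after setting $\Delta_{x_{i}}=0$ and writing $-f_{i}R_{i}e_{3}=u_{i_d}+\delta u_{i}$. Equation \refeqn{EOMM4} is fully actuated and decoupled from the translational and link dynamics in the simplified model, so it plays no role here. The first step is to verify that the equilibrium is consistent: setting all time derivatives to zero in \refeqn{EOMM1} and \refeqn{EOMM3} forces $\sum_{i}u_{i_d}=-M_{T}ge_{3}$ and, because of symmetry in how the individual link equations distribute the payload weight, the natural choice $u_{i_d}=-(M_{iT}+m_{0}/n)ge_{3}$ satisfies both the translational balance and the link balance $\hat{e}_{3}^{2}(u_{i_d}+M_{0ij}ge_{3})=0$.

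Next I introduce the variations $\delta x_{0}$, $\delta R_{0}=\hat\eta_{0}$ (so that $\delta\Omega_{0}=\dot\eta_{0}$ at equilibrium), $\delta q_{ij}=\xi_{ij}\times e_{3}=-\hat{e}_{3}\xi_{ij}$ with $\xi_{ij}\cdot e_{3}=0$, and $\delta u_{i}=u_{i}-u_{i_d}$, and then project the link variables onto $\Re^{2}$ via the map $C=[e_{1},e_{2}]$. Using $\hat e_3^3=-\hat e_3$, the linearization of $\hat{q}_{ij}^{2}\ddot{q}_{ij}$ evaluated at the equilibrium reduces to $\hat{e}_{3}\ddot{\xi}_{ij}$, which after multiplying by $C^{T}\hat{e}_{3}$ produces the $\Mb_{qqi}$ block through cross-coupling of different links, and by $C^{T}$ on the left extracts the in-plane components. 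The translational acceleration $\ddot x_{0}$ and the angular acceleration $\dot\Omega_{0}$ enter each $(ij)$-equation multiplied by $\hat{e}_{3}^{2}$ and $\hat{e}_{3}^{2}\hat{\rho}_{i}$ respectively, which after projection yield $\Mb_{ix_{0}}$ and $\Mb_{i\Omega_{0}}$; the transpose relations between off-diagonal blocks follow from symmetry of the kinetic energy Hessian.

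The stiffness matrix $\Gb$ comes entirely from the linearization of the potential and equilibrium-thrust terms. In the payload translational equation, all potential contributions are constants, so the first row of $\Gb$ vanishes. In the payload rotational equation \refeqn{EOMM2}, the term $\sum_{i}\hat{\rho}_{i}R_{0}^{T}(u_{i_d}+M_{iT}ge_{3})$ produces, upon variation of $R_{0}$ through $\hat\eta_{0}$, the block $\Gb_{\Omega_{0}\Omega_{0}}=\sum_{i}\tfrac{m_{0}}{n}g\hat{\rho}_{i}\hat{e}_{3}$, where the cancellation of the $M_{iT}$ contribution against the corresponding part of $u_{i_d}$ leaves only the $m_{0}/n$ share. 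In the $(ij)$ link equations, the linearization of $-\hat{q}_{ij}^{2}(u_{i_d}+M_{0ij}ge_{3})$ with respect to $\delta q_{ij}=\xi_{ij}\times e_{3}$ is handled by differentiating the product and using the identity $(\hat{a}\hat{b}+\hat{b}\hat{a})|_{b=e_{3}}$ acting on vectors perpendicular to $e_{3}$; the resulting scalar is $(-M_{iT}-m_{0}/n+M_{0ij})g$, which yields the diagonal $\Gb_{i}$. The input matrix $\Bb$ is read off by collecting the coefficients of $\delta u_{i}$ in the linearized \refeqn{EOMM1}, \refeqn{EOMM2}, \refeqn{EOMM3}, giving $I_{3}$, $\hat{\rho}_{i}$, and $-C^{T}\hat{e}_{3}$ in the corresponding rows. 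All remaining quadratic and higher-order contributions in $(\xb,\dot{\xb})$ are gathered into $\g(\xb,\dot{\xb})$.

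The main obstacle will be the bookkeeping for the $\hat q_{ij}^{2}$ terms in \refeqn{EOMM3}: because $\hat{q}_{ij}^{2}$ itself depends on $\xi_{ij}$, one must distinguish contributions that survive linearization (those where $\hat q_{ij}^{2}$ is frozen at $\hat{e}_{3}^{2}$ and only the argument is varied) from contributions obtained by varying $\hat q_{ij}^{2}$ against an equilibrium argument. The latter is what generates the $\Gb_{i}$ block, and it requires careful use of the constraint $\xi_{ij}\cdot e_{3}=0$ to discard terms proportional to $e_{3}$ before applying the projection $C^{T}$. Once this is handled consistently and the symmetry $\Nb_{\Omega_{0}x_{0}}=\Nb_{x_{0}\Omega_{0}}^{T}$ and its linearized analogue is verified, assembling the blocks into $\Mb$, $\Gb$, and $\Bb$ as displayed in the statement is a routine collection of coefficients.
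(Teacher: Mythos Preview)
Your proposal is correct and follows essentially the same route as the paper: substitute the variations $\delta x_{0}$, $\delta R_{0}=\hat\eta_{0}$, $\delta q_{ij}=\xi_{ij}\times e_{3}$ into \refeqn{EOMM1}--\refeqn{EOMM3}, evaluate at the hanging equilibrium with $u_{i_d}=-(M_{iT}+m_{0}/n)ge_{3}$, project the link components through $C$, and collect the linear terms into $\Mb$, $\Gb$, $\Bb$ while relegating the remainder to $\g$. The paper's proof additionally records the kinematic identity $\dot\xi_{ij}=\delta\omega_{ij}$ and the simplification $C^{T}\hat e_{3}^{2}C=-I_{2}$, both of which you will need when you carry out the block-by-block bookkeeping you describe; one small caution is that the off-diagonal blocks of $\Mb$ are not literal transposes of one another (the factors $l_{ij}$ appear asymmetrically because \refeqn{EOMM3} is not divided by $l_{ij}$ before projection), so derive $\Mb_{ix_{0}}$ and $\Mb_{i\Omega_{0}}$ directly rather than by the kinetic-energy-Hessian symmetry argument.
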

\begin{proof}
See Appendix~\ref{sec:P1stability}.
\end{proof}
We present the following PD-type control system for the linearized dynamics
\begin{align}
\delta u_{i}
=&-K_{x_{i}}\xb-K_{\dot{x}_{i}}\dot{\xb},
\end{align}
for controller gains $K_{x_{i}},K_{\dot{x}_{i}}\in\Re^{3\times D_{\xb}}$. Provided that \refeqn{EOMLin} is controllable, we can choose the combined controller gains $K_{\xb}=[K_{x_{1}}^T,\,\ldots\;K_{x_{n}}^T]^{T}$, $K_{\dot{\xb}}=[K_{\dot{x}_{1}}^T,\,\ldots K_{\dot{x}_{n}}^T]^{T}\in\Re^{3n\times D_{\xb}}$ such that the equilibrium is asymptotically stable for the linearized equations~\cite{Kha96}. Then, the equilibrium becomes asymptotically stable for the nonlinear Euler-Lagrange equation. The controlled linearized system can be written as
\begin{align}\label{eqn:zdot1}
\dot{z}_{1}=\mathds{A} z_{1}+\mathds{B}(\Bb X+\g(\xb,\dot{\xb})),
\end{align}
where $z_{1}=[\xb,\dot{\xb}]^{T}\in\Re^{2D_{\xb}}$ and
\begin{align}
\mathds{A}=\begin{bmatrix}
0&I\\
-\Mb^{-1}(\Gb+\Bb K_{\xb})&-\Mb^{-1}\Bb K_{\dot{\xb}}
\end{bmatrix},
\mathds{B}=\begin{bmatrix}
0\\
\Mb^{-1}
\end{bmatrix}.
\end{align}
We can also choose $K_{\xb}$ and $K_{\dot{\xb}}$ such that $\mathds{A}\in\Re^{2D_{x}\times 2D_{x}}$ is Hurwitz. Then for any positive definite matrix $Q\in\Re^{2D_{\xb}\times 2D_{\xb}}$, there exist a positive definite and symmetric matrix $P\in\Re^{2D_{\xb}\times 2D_{\xb}}$ such that $\mathds{A}^{T}P+P\mathds{A}=-Q$ according to~\cite[Thm 3.6]{Kha96}.

%%%%%%%%%%%%%%%%%%%%%%%%%%%%%%%%%%%%%%%%%%%%%%%%%%%%%%%%%%%%%%%
%%%%%%%%%%%%%%%%%%%%%%%%%%%%%%%%%%%%%%%%%%%%%%%%%%%%%%%%%%%%%%%

\section{CONTROL SYSTEM DESIGN FOR THE FULL DYNAMIC MODEL}
The control system designed at the previous section is based on a simplifying assumption that each quadrotor can generates a thrust along any direction. In the full dynamic model, the direction of the thrust for each quadrotor is parallel to its third body-fixed axis always. In this section, the attitude of each quadrotor is controlled such that the third body-fixed axis becomes parallel to the direction of the ideal control force designed in the previous section. Also in the full dynamics model, we considers the $\Delta_{x_i}$ in the control design and introduce a new integral term to eliminate the disturbances and uncertainties. The central idea is that the attitude $R_{i}$ of the quadrotor is controlled such that its total thrust direction $-R_{i}e_{3}$, corresponding to the third body-fixed axis, asymptotically follows the direction of the fictitious control input $u_{i}$. By choosing the total thrust magnitude properly, we can guarantee asymptotical stability for the full dynamic model. 

Let $A_{i}\in\Re^{3}$ be the ideal total thrust of the $i$-th quadrotor that asymptotically stabilize the desired equilibrium. Therefor, we have
\begin{align}\label{eqn:Ai}
A_{i}=u_{i_{d}}+\delta u_{i}=-K_{x_{i}}\xb-K_{\dot{x}_{i}}\dot{\xb}-K_{z}\satr_{\sigma}(e_{\xb})+u_{i_{d}},
\end{align} 
where $f_{i_{d}}\in\Re$ and $u_{i_{d}}\in\Re^{3}$ are the total thrust and control input of each quadrotor at its equilibrium respectively. 
where the following integral term $e_{\xb}\in\Re^{D_{x}}$ is added to eliminate the effect of disturbance $\Delta_{x_i}$ in the full dynamic model 
\begin{align}\label{eqn:exterm}
e_{\xb}=\int^{t}_{0}{(P\mathds{B})^{T}z_{1}(\tau)\;d\tau},
\end{align}
where $K_z =[k_{z}I_3,k_{z}I_3,k_{z_1}I_{3\times 2},\ldots k_{z_n}I_{3\times 2}]\in\Re^{3\times D_{x}}$ is an integral gain. For a positive constant $\sigma\in\Re$, a saturation function $\sat_\sigma:\Re\rightarrow [-\sigma,\sigma]$ is introduced as
\begin{align*}
\sat_{\sigma}(y) = \begin{cases}
\sigma & \mbox{if } y >\sigma\\
y & \mbox{if } -\sigma \leq y \leq\sigma\\
-\sigma & \mbox{if } y <-\sigma\\
\end{cases}.
\end{align*}
If the input is a vector $y\in\Re^n$, then the above saturation function is applied element by element to define a saturation function $\sat_\sigma(y):\Re^n\rightarrow [-\sigma,\sigma]^n$ for a vector. From the desired direction of the third body-fixed axis of the $i$-th quadrotor, namely $b_{3_{i}}\in\Sph^2$, is given by
\begin{align}
b_{3_{i}}=-\frac{A_{i}}{\|A_{i}\|}.
\end{align}
This provides a two-dimensional constraint on the three dimensional desired attitude of each quadrotor, such that there remains one degree of freedom. To resolve it, the desired direction of the first body-fixed axis $b_{1_{i}}(t)\in\Sph^2$ is introduced as a smooth function of time. Due to the fact that the first body-fixed axis is normal to the third body-fixed axis, it is impossible to follow an arbitrary command $b_{1_{i}}(t)$ exactly. Instead, its projection onto the plane normal to $b_{3_{i}}$ is followed, and the desired direction of the second body-fixed axis is chosen to constitute an orthonormal frame~\cite{LeeLeoAJC13}. More explicitly, the desired attitude of the $i$-th quadrotor is given by
\begin{align}
R_{i_{c}}=\begin{bmatrix}
-\frac{(\hat{b}_{3_{i}})^{2}b_{1_{i}}}{\|(\hat{b}_{3_{i}})^{2}b_{1_{i}}\|} & \frac{\hat{b}_{3_{i}}b_{1_{i}}}{\|\hat{b}_{3_{i}}b_{1_{i}}\|} & b_{3_{i}}\end{bmatrix},
\end{align}
which is guaranteed to be an element of $\SO$. The desired angular velocity is obtained from the attitude kinematics equation, $\Omega_{i_{c}}=(R_{i_{c}}^{T}\dot{R}_{i_{c}})^\vee\in\Re^{3}$.

Define the tracking error vectors for the attitude and the angular velocity of the $i$-th quadrotor as
\begin{align}
e_{R_{i}}=\frac{1}{2}(R_{i_{c}}^{T}R_{i}-R_{i}^{T}R_{i_{c}})^{\vee},\; e_{\Omega_{i}}=\Omega_{i}-R_{i}^{T}R_{i_{c}}\Omega_{i_{c}},
\end{align}
and a configuration error function on $\SO$ as follows
\begin{align}
\Psi_{i}= \frac{1}{2}\trs{I- R_{{i}_c}^T R_{i}}.
\end{align}
The thrust magnitude is chosen as the length of $u_{i}$, projected on to $-R_{i}e_{3}$, and the control moment is chosen as a tracking controller on $\SO$:
\begin{align}
f_{i}=&-A_{i}\cdot R_{i}e_{3},\label{eqn:fi}\\
M_{i}=&-k_R e_{R_{i}} -k_\Omega e_{\Omega_{i}}-k_{I}e_{I_i}\nonumber\\
&+(R_{i}^TR_{c_i}\Omega_{c_{i}})^\wedge J_{i} R_{i}^T R_{c_i} \Omega_{c_i} + J_{i} R_{i}^T R_{c_i}\dot\Omega_{c_i},\label{eqn:Mi}
\end{align}
where $k_R,k_\Omega$, and $k_{I}$ are positive constants and the following integral term is introduced to eliminate the effect of fixed disturbance $\Delta_{R_i}$
\begin{align}\label{eqn:integralterm}
e_{I_i}=\int^{t}_{0}{e_{\Omega_i}(\tau)+c_{2}e_{R_i}(\tau)d\tau},
\end{align}
where $c_{2}$ is a positive constant. Stability of the corresponding controlled systems for the full dynamic model can be studied by showing the the error due to the discrepancy between the desired direction $b_{3_{i}}$ and the actual direction $R_{i}e_{3}$. 
\begin{prop}\label{prop:stability}
Consider control inputs $f_i$, $M_i$ defined in \refeqn{fi} and \refeqn{Mi}. There exist controller parameters and gains such that, (i) the zero equilibrium of tracking error is stable in the sense of Lyapunov; (ii) the tracking errors $e_{R_i}$, $e_{\Omega_i}$, $\xb$, $\dot{\xb}$ asymptotically converge to zero as $t\rightarrow\infty$; (iii) the integral terms $e_{I_i}$ and $e_{\xb}$ are uniformly bounded.
\end{prop}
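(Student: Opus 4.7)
The plan is to build a composite Lyapunov function for the closed-loop system that treats the translational/payload dynamics, the rotational dynamics of each quadrotor, and the two integral terms as coupled subsystems, and to bound its derivative by a negative-definite quadratic form in the tracking error variables in a neighborhood of the equilibrium. Since the $\mathds{A}$ matrix in \refeqn{zdot1} is Hurwitz by construction, the translational portion admits a quadratic Lyapunov function $V_x = z_1^{T} P z_1$ with $P$ solving $\mathds{A}^{T} P + P\mathds{A} = -Q$. For each quadrotor I would use the standard geometric-attitude Lyapunov function augmented with an integral correction that absorbs $\Delta_{R_i}$:
\begin{align*}
V_{R_i} = \tfrac{1}{2} e_{\Omega_i}\cdot J_i e_{\Omega_i} + k_R \Psi_i + c_2 e_{R_i}\cdot J_i e_{\Omega_i} + \tfrac{k_I}{2}\bigl\| e_{I_i} - \tfrac{1}{k_I}\Delta_{R_i}\bigr\|^2,
\end{align*}
whose derivative, under \refeqn{Mi} and for suitably small $c_2$, is bounded above by a negative-definite quadratic form in $(e_{R_i}, e_{\Omega_i})$ on a sublevel set where $\Psi_i < 2$.

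Next, I would quantify the discrepancy between the ideal force $A_i$ in \refeqn{Ai} and the actual force $-f_i R_i e_3$ applied to the payload/cable dynamics. Because the thrust magnitude \refeqn{fi} is the projection of $A_i$ onto $-R_i e_3$, the error $A_i + f_i R_i e_3$ vanishes exactly when $-R_i e_3 = A_i/\|A_i\|$, and for small attitude error it is dominated by $\|A_i\|\sqrt{\Psi_i}$. I would therefore view the full translational dynamics as the linearized controlled system \refeqn{zdot1} plus a perturbation of the form $\mathds{B}\,\Delta_i$ with $\|\Delta_i\|\le C\|A_i\|\sqrt{\Psi_i}$ plus the higher-order term $\g(\xb,\dot\xb)$. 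Along trajectories, this lets me estimate
\begin{align*}
\dot V_x \le -\lambda_{\min}(Q)\|z_1\|^2 + 2\|P\mathds{B}\|\,\|z_1\|\bigl(\textstyle\sum_i C\|A_i\|\sqrt{\Psi_i} + \|\g\|\bigr) + \dot V_x^{\text{int}},
\end{align*}
where $\dot V_x^{\text{int}}$ comes from the integral term $e_{\xb}$ chosen as in \refeqn{exterm}. The saturation $\sat_\sigma$ is what keeps the integral term bounded and makes the contribution of $K_z$ in $A_i$ a small, bounded perturbation rather than a destabilizing one.

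I would then combine $V = V_x + \sum_i V_{R_i}$ and choose the controller gains $k_R, k_\Omega, c_2, k_I, K_{\xb}, K_{\dot\xb}, k_z$ and the saturation level $\sigma$ so that $\dot V$ is bounded above by a negative-definite quadratic form in the vector $\zeta = [\,z_1;\,(e_{R_i}, e_{\Omega_i})_i\,]$ on a region of attraction that contains the initial condition. This requires using Young's inequality on the cross-coupling terms (the $\sqrt{\Psi_i}$ term is absorbed into $\|e_{R_i}\|^2$ via $\Psi_i \le \|e_{R_i}\|^2$ on $\Psi_i<1$), taking $k_R$ large relative to the cross-coupling constants, and taking $c_2$ small enough relative to $k_R$, $k_\Omega$, and the inertia bounds. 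Stability in the sense of Lyapunov (claim i) follows from $\dot V \le 0$ on this region; uniform boundedness of $e_{I_i}$ and $e_{\xb}$ (claim iii) follows because the integral correction is inside $V$ and $\sat_\sigma$ caps the growth of $e_{\xb}$; and asymptotic convergence of $(e_{R_i}, e_{\Omega_i}, \xb, \dot\xb)$ to zero (claim ii) follows by Barbalat's lemma applied to $\dot V$, since $V$ is lower bounded and $\dot V$ is uniformly continuous along trajectories.

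The main obstacle I anticipate is the cross-coupling step: the thrust-direction error feeds into the translational dynamics through a term proportional to $\|A_i\|$, and $A_i$ itself depends on the full translational state and the integral $e_{\xb}$, so a naive bound is not small enough to dominate the positive contributions in $\dot V_x$. The trick is to exploit the saturation on $e_{\xb}$ to write $\|A_i\| \le \|u_{i_d}\| + C_\sigma + C'(\|\xb\| + \|\dot\xb\|)$, and then to use the cubic-in-error structure of the cross-coupling (linear in $\|z_1\|$, times $\sqrt{\Psi_i}$, times a bounded factor) to absorb it into $-\lambda_{\min}(Q)\|z_1\|^2$ and $-c_R\|e_{R_i}\|^2$ locally. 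This is also where selecting $\sigma$ small matters, because it controls how much the integral term can disturb the nominal Hurwitz closed loop before $e_{\xb}$ settles at its disturbance-canceling value. Full calculations parallel the cascade argument in~\cite{LeeLeoAJC13,farhadthesisphd} and are deferred to the appendix.
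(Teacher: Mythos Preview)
Your proposal follows essentially the same architecture as the paper's proof: a composite Lyapunov function $\mathcal{V}=\mathcal{V}_1+\mathcal{V}_2$, with $\mathcal{V}_2$ the sum of the geometric attitude Lyapunov functions (augmented by $\tfrac{k_I}{2}\|e_{I_i}-\Delta_{R_i}/k_I\|^2$), $\mathcal{V}_1$ built from $z_1^T P z_1$, the thrust-direction discrepancy bounded by $\|A_i\|\,\|e_{R_i}\|$, the saturation used to bound $\|A_i\|$ affinely in $\|z_1\|$, and LaSalle--Yoshizawa/Barbalat to conclude.

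Two points need correction. First, your $V_x=z_1^TPz_1$ alone does not absorb the translational disturbance $\Delta_x$ or the saturated integral feedback; the paper augments $\mathcal{V}_1$ by the potential-type term
\[
2\int_{p_{eq}}^{e_{\xb}}\bigl(\Bb K_z\sat_\sigma(\mu)-\Bb\Delta_x\bigr)\cdot d\mu,
\]
which is positive definite about the equilibrium $e_{\xb}=p_{eq}$ and whose time derivative cancels exactly the $-\Bb K_z\sat_\sigma(e_{\xb})+\Bb\Delta_x$ terms in $\dot z_1$. Without this term you cannot conclude (iii) for $e_{\xb}$: the saturation does \emph{not} ``cap the growth of $e_{\xb}$'' as you wrote---it only bounds $\sat_\sigma(e_{\xb})$, not $e_{\xb}$ itself---so boundedness of $e_{\xb}$ must come from $\mathcal{V}$ being nonincreasing and containing this integral. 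Second, the role of $\sigma$ is reversed: positive definiteness of the above integral about $p_{eq}$ requires $\delta<k_z\sigma$ (i.e., $\sigma$ large enough that the disturbance lies in the linear region of the saturation), not $\sigma$ small. With these two fixes your outline matches the paper.
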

\begin{proof}
See Appendix \ref{sec:Pstabilityddd}.
\end{proof}
By utilizing geometric control systems for quadrotor, we show that the hanging equilibrium of the links can be asymptotically stabilized while translating the payload to a desired position and attitude. The control systems proposed explicitly consider the coupling effects between the cable/load dynamics and the quadrotor dynamics. We presented a rigorous Lyapunov stability analysis to establish stability properties without any timescale separation assumptions or singular perturbation, and a new nonlinear integral control term is designed to guarantee robustness against unstructured uncertainties in both rotational and translational dynamics.

%%%%%%%%%%%%%%%%%%%%%%%%%%%%%%%%%%%%%%%%%%%%%%%%%%%%%%%%%%%%%%%
%%%%%%%%%%%%%%%%%%%%%%%%%%%%%%%%%%%%%%%%%%%%%%%%%%%%%%%%%%%%%%%
\section{NUMERICAL EXAMPLE}

We demonstrate the desirable properties of the proposed control system with numerical examples. Two cases are presented. At the first case, a payload is transported to a desired position from the ground. The second case considers stabilization of a payload with large initial attitude errors.

\subsection{Stabilization of the Rigid Body}
Consider four quadrotors $(n=4)$ connected via flexible cables  to a rigid body payload. Initial conditions are chosen as
\begin{gather*}
x_{0}(0)=[1.0,\; 4.8,\; 0.0]^{T}\,\mathrm{m},\; v_{0}(0)=0_{3\times 1},\\
q_{ij}(0)=e_{3},\; \omega_{ij}(0)=0_{3\times 1},\; R_{i}(0)=I_{3\times 3},\; \Omega_{i}(0)=0_{3\times 1}\\
R_{0}(0)=I_{3\times3},\; \Omega_{0}=0_{3\times 1}.
\end{gather*}
The desired position of the payload is chosen as
\begin{align}
x_{0_{d}}(t)=[0.44,\; 0.78,\; -0.5]^{T}\,\mathrm{m}.
\end{align}
\begin{figure}[H]
\centerline{
	\subfigure[Payload position ($x_0$:blue, $x_{0_{d}}$:red)]{
		\includegraphics[width=0.5\columnwidth]{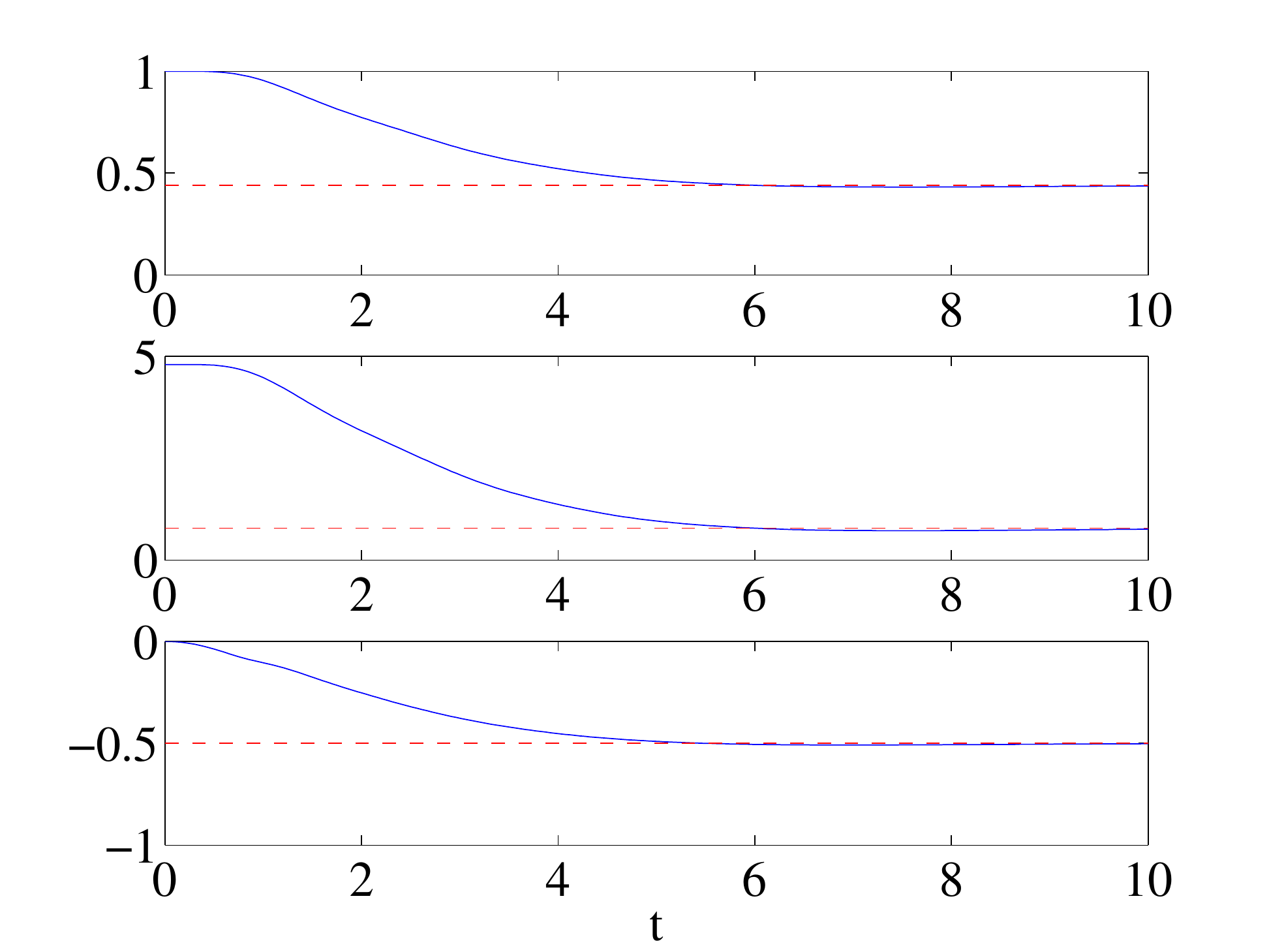}}
	\subfigure[Payload velocity ($v_0$:blue, $v_{0_{d}}$:red)]{
		\includegraphics[width=0.5\columnwidth]{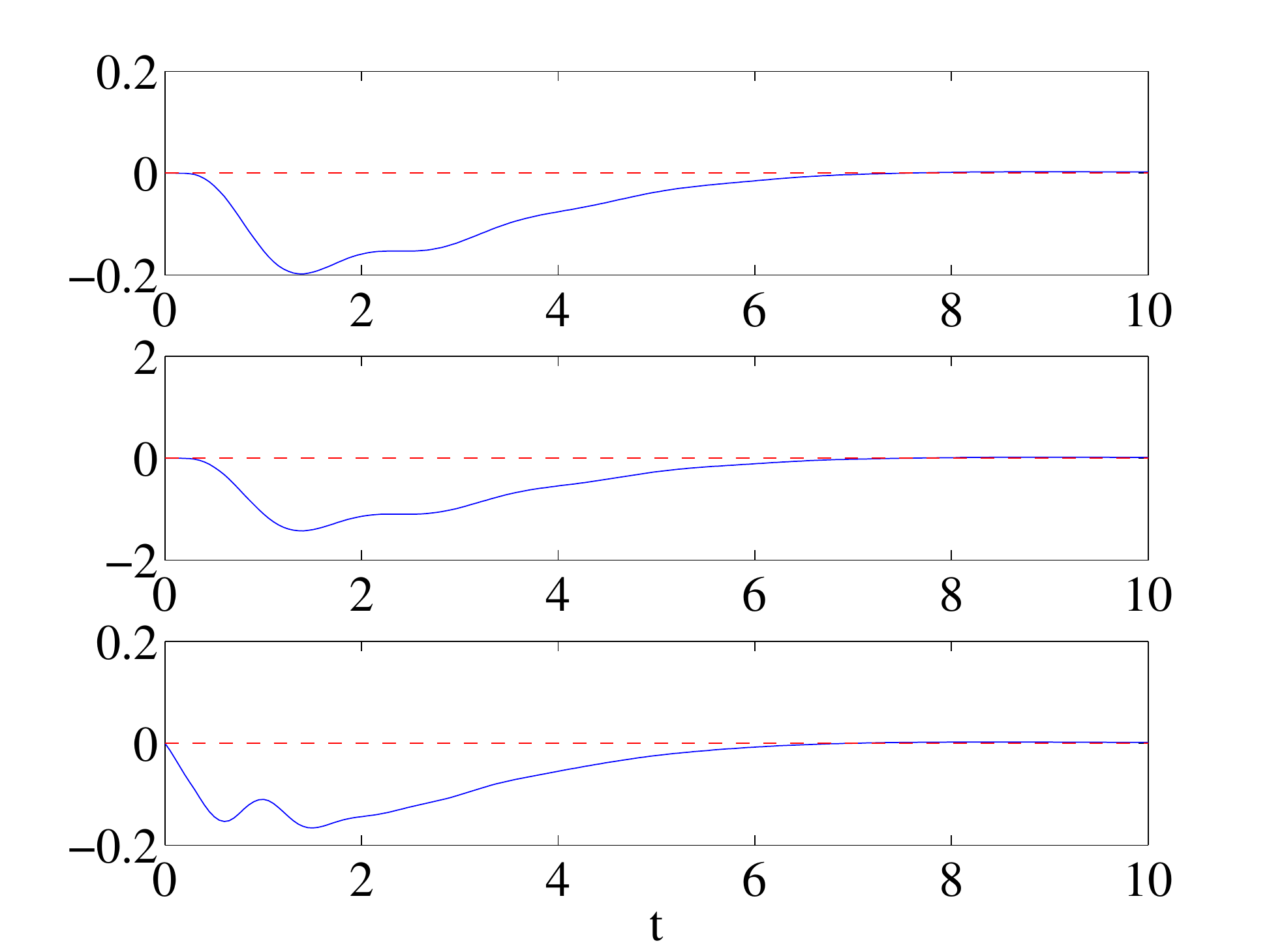}}
}
\centerline{
	\subfigure[Payload angular velocity $\Omega_{0}$]{
		\includegraphics[width=0.5\columnwidth]{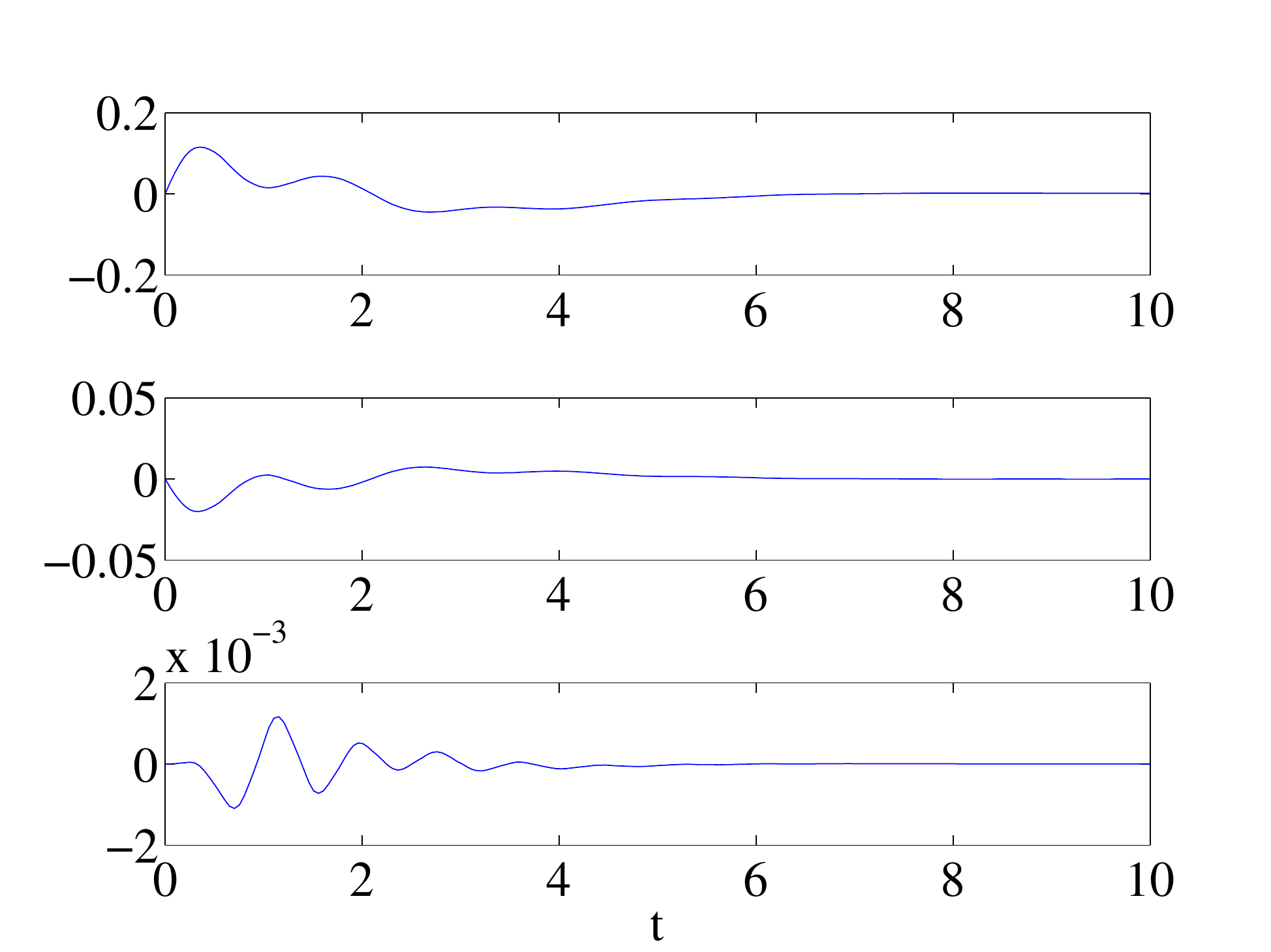}}
	\subfigure[Quadrotors angular velocity errors $e_{\Omega_{i}}$]{
		\includegraphics[width=0.5\columnwidth]{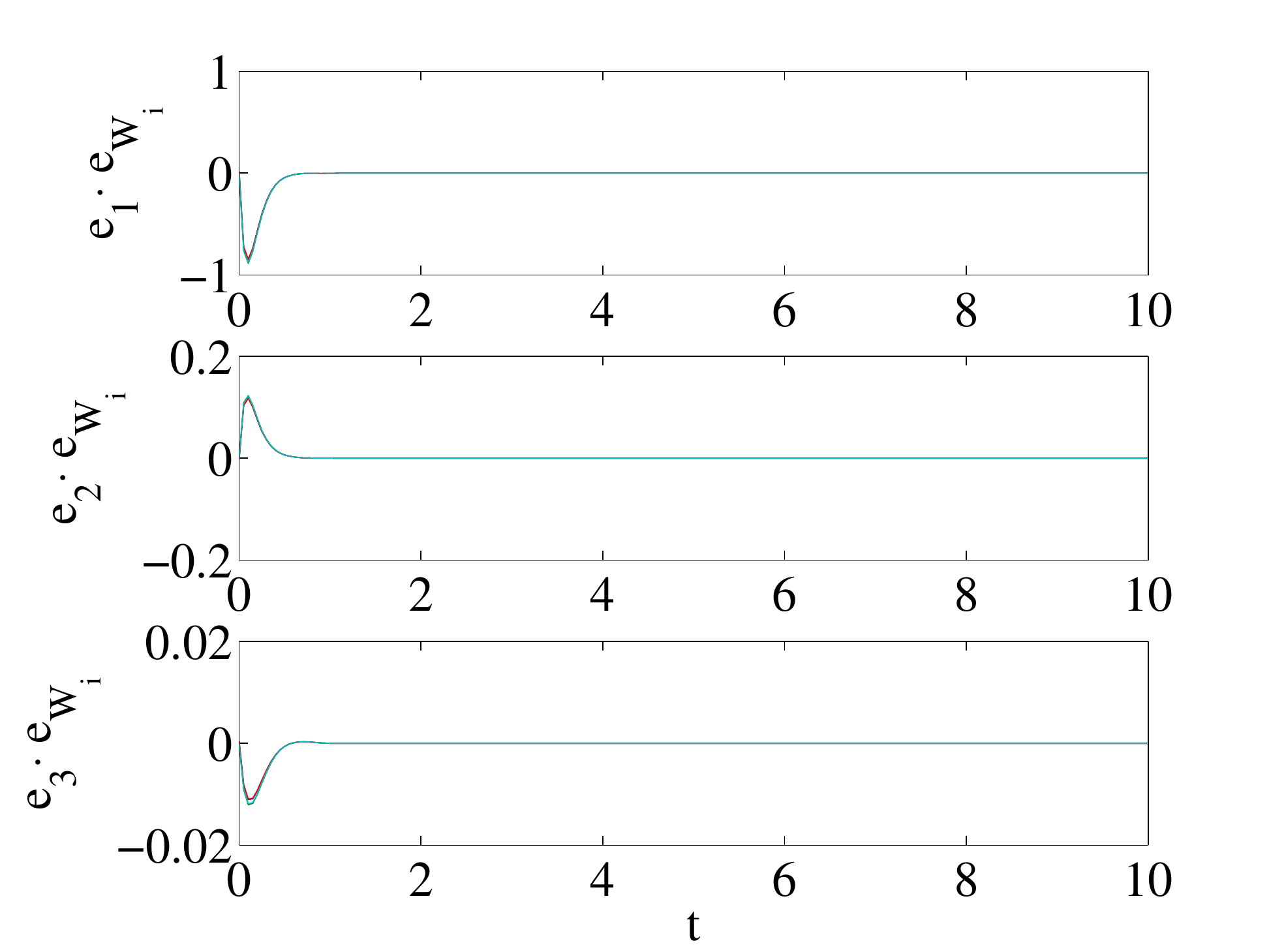}}
}
\centerline{
	\subfigure[Payload attitude error $\Psi_{0}$]{
		\includegraphics[width=0.5\columnwidth]{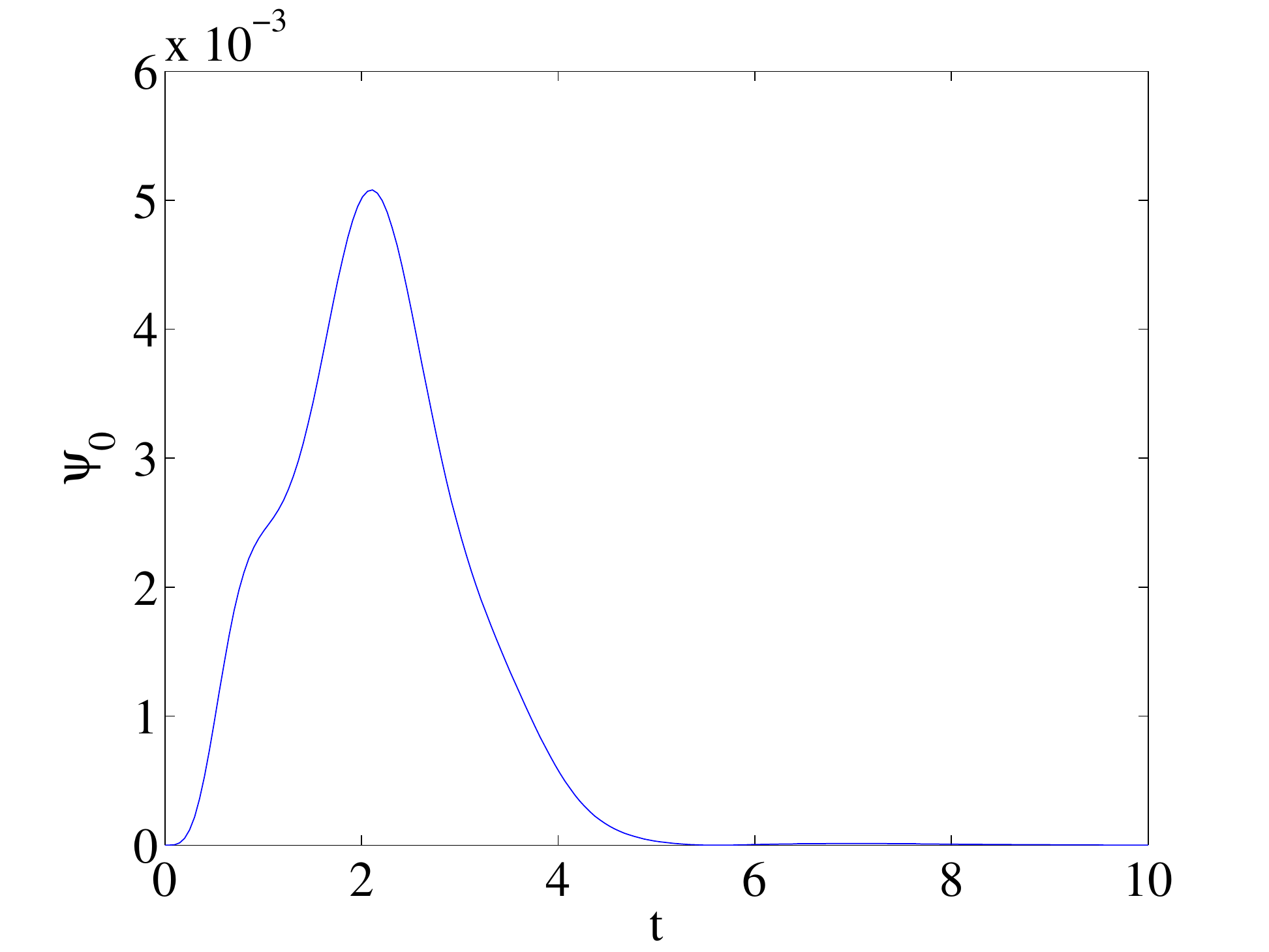}}
	\subfigure[Quadrotors attitude errors $\Psi_{i}$]{
		\includegraphics[width=0.5\columnwidth]{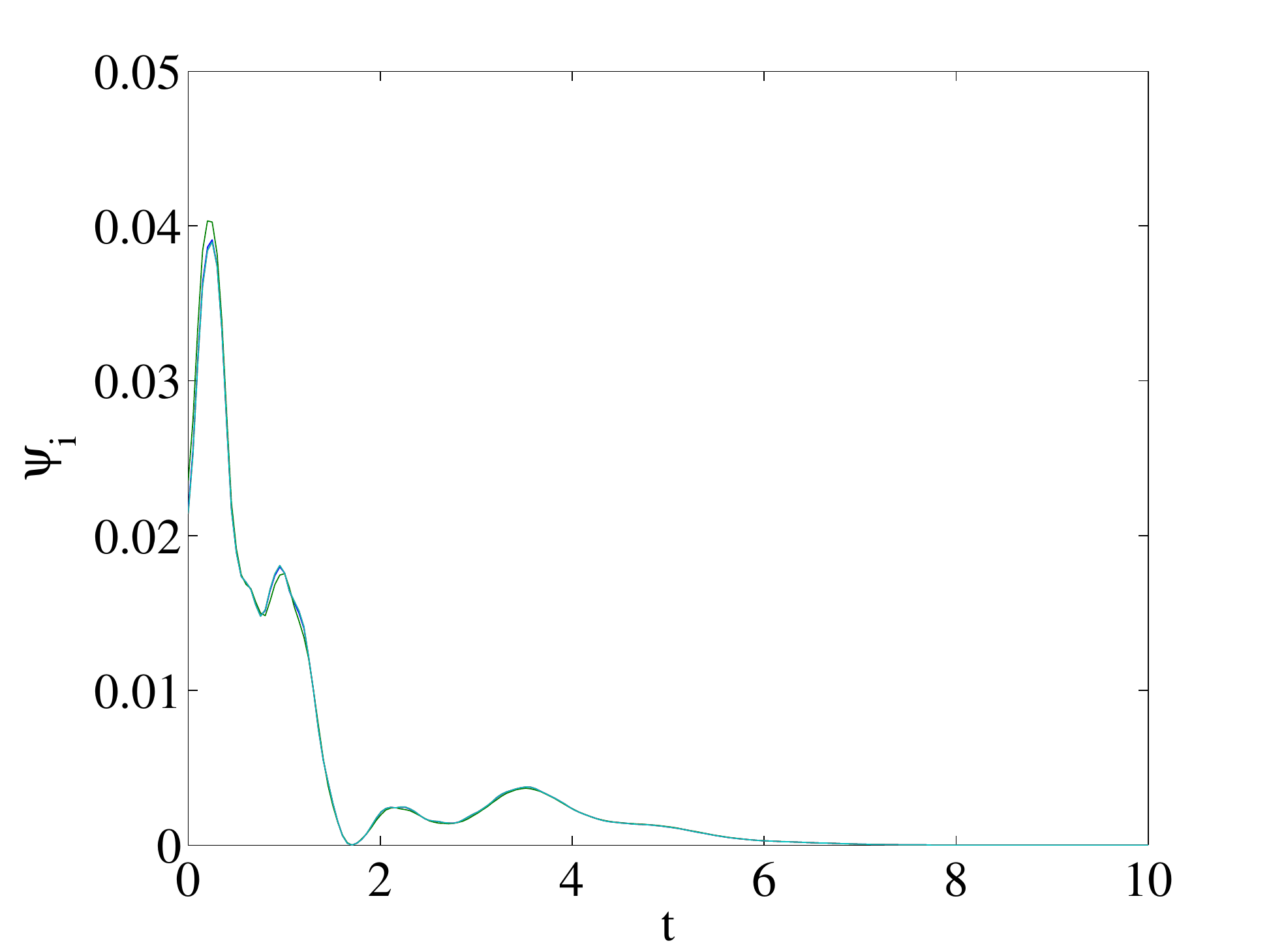}}
}
\centerline{
	\subfigure[Quadrotors total thrust inputs $f_{i}$]{
		\includegraphics[width=0.5\columnwidth]{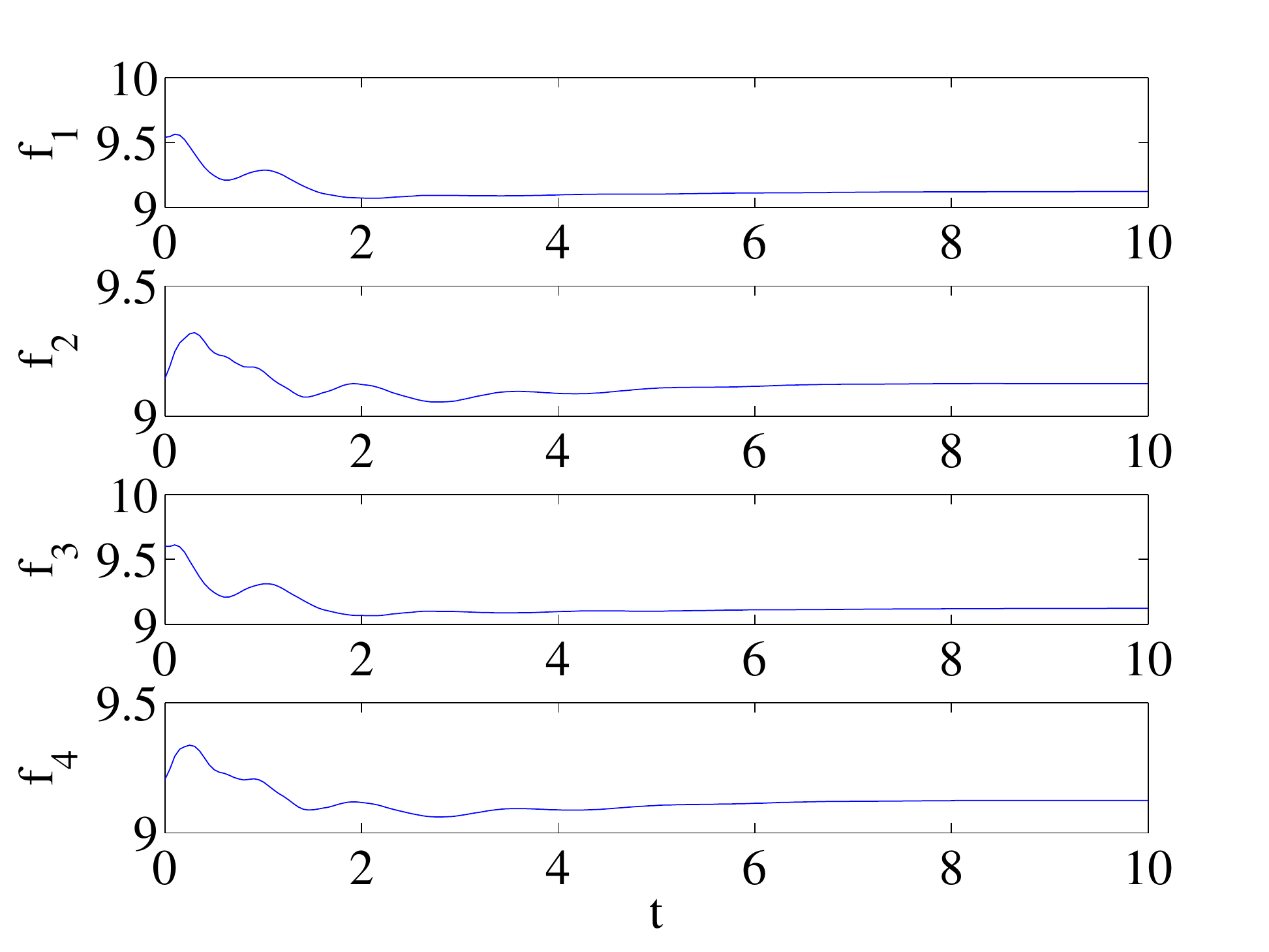}}
	\subfigure[Direction error $e_{q}$, and angular velocity error $e_{\omega}$ for the links]{
		\includegraphics[width=0.5\columnwidth]{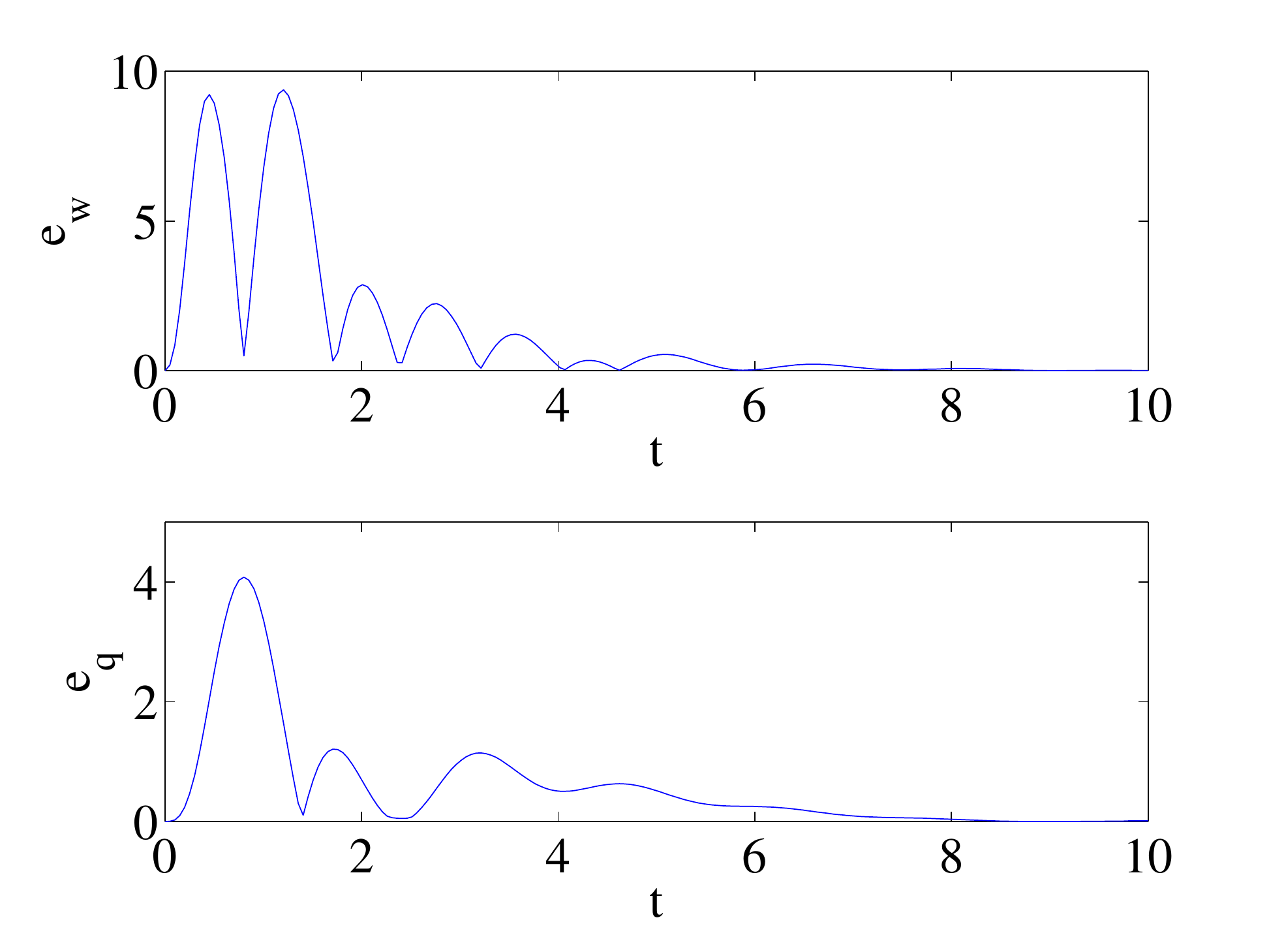}}
}
\caption{Stabilization of a rigid-body connected to multiple quadrotors}\label{fig:simresults1}
\end{figure}
The mass properties of quadrotors are chosen as
\begin{gather}
m_{i}=0.755\,\mathrm{kg},\nonumber\\ 
J_{i}=\diag[0.557,\; 0.557,\; 1.05]\times 10^{-2} \mathrm{kgm^2}.
\end{gather}
The payload is a box with mass $m_{0}=0.5\,\mathrm{kg}$, and its length, width, and height are $0.6$, $0.8$, and $0.2\,\mathrm{m}$, respectively. Each cable connecting the rigid body to the $i$-th quadrotor is considered to be $n_{i}=5$ rigid links. All the links have the same mass of $m_{ij}=0.01\,\mathrm{kg}$ and length of $l_{ij}=0.15\,\mathrm{m}$. Each cable is attached to the following points of the payload
\begin{gather*}
\rho_{1}=[0.3,\; -0.4,\; -0.1]^T\,\mathrm{m},\; \rho_{2}=[0.3,\; 0.4,\; -0.1]^T\,\mathrm{m},\\
\rho_{3}=[-0.3,\; -0.4,\; -0.1]^T\,\mathrm{m},\; \rho_{4}=[-0.3,\; 0.4,\; -0.1]^T\,\mathrm{m}.
\end{gather*}
Numerical simulation results are presented at Figure \ref{fig:simresults1}, which shows the position and velocity of the payload, and its tracking errors. We have also presented the link direction and link angular velocity errors defined as
\begin{gather}
e_{q}=\sum_{i=1}^{m}\sum_{j=1}^{n_{i}}\|q_{ij}-e_3\|,\\
e_{\omega}=\sum_{i=1}^{m}\sum_{j=1}^{n_{i}}\|\omega_{ij}\|.
\end{gather}

\begin{figure}[h]
\centerline{
	\subfigure[3D perspective]{
		\includegraphics[width=0.9\columnwidth]{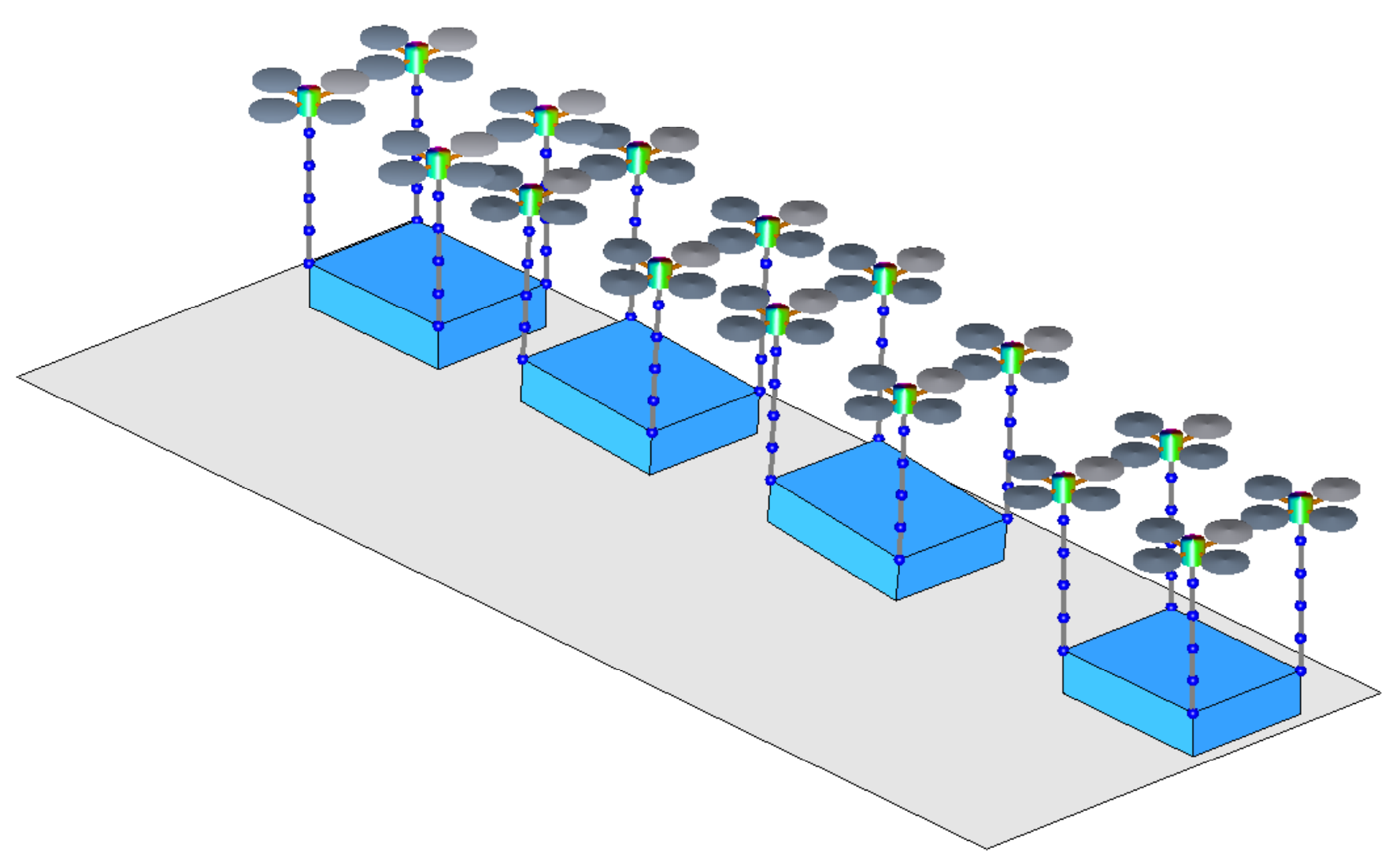}}
}
\centerline{
	\subfigure[Side view]{
		\includegraphics[width=0.9\columnwidth]{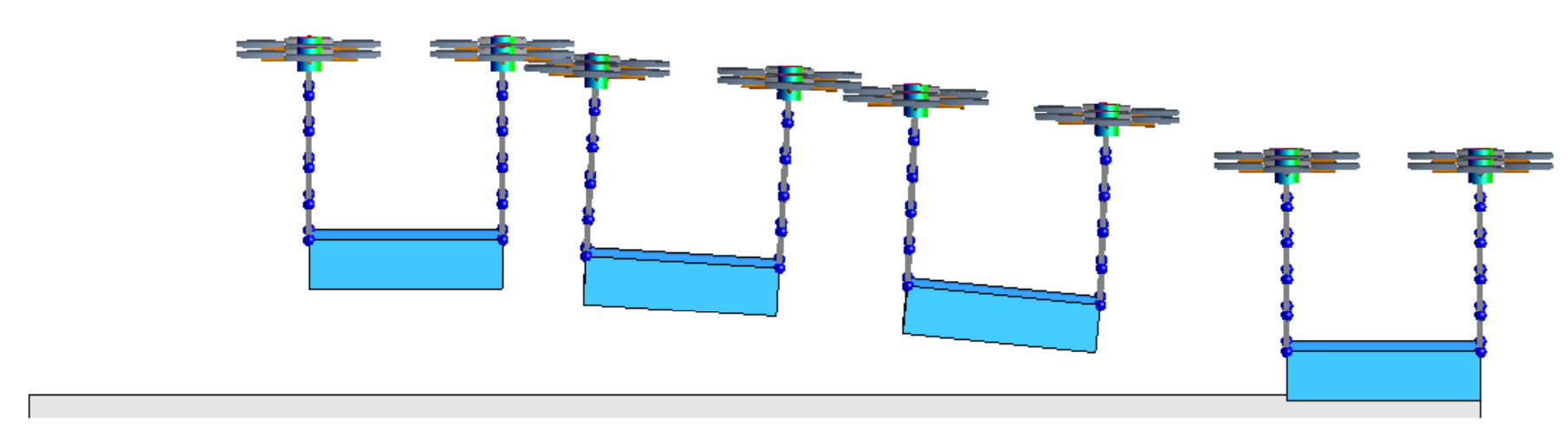}}
}
\centerline{
	\subfigure[Top view]{
		\includegraphics[width=0.9\columnwidth]{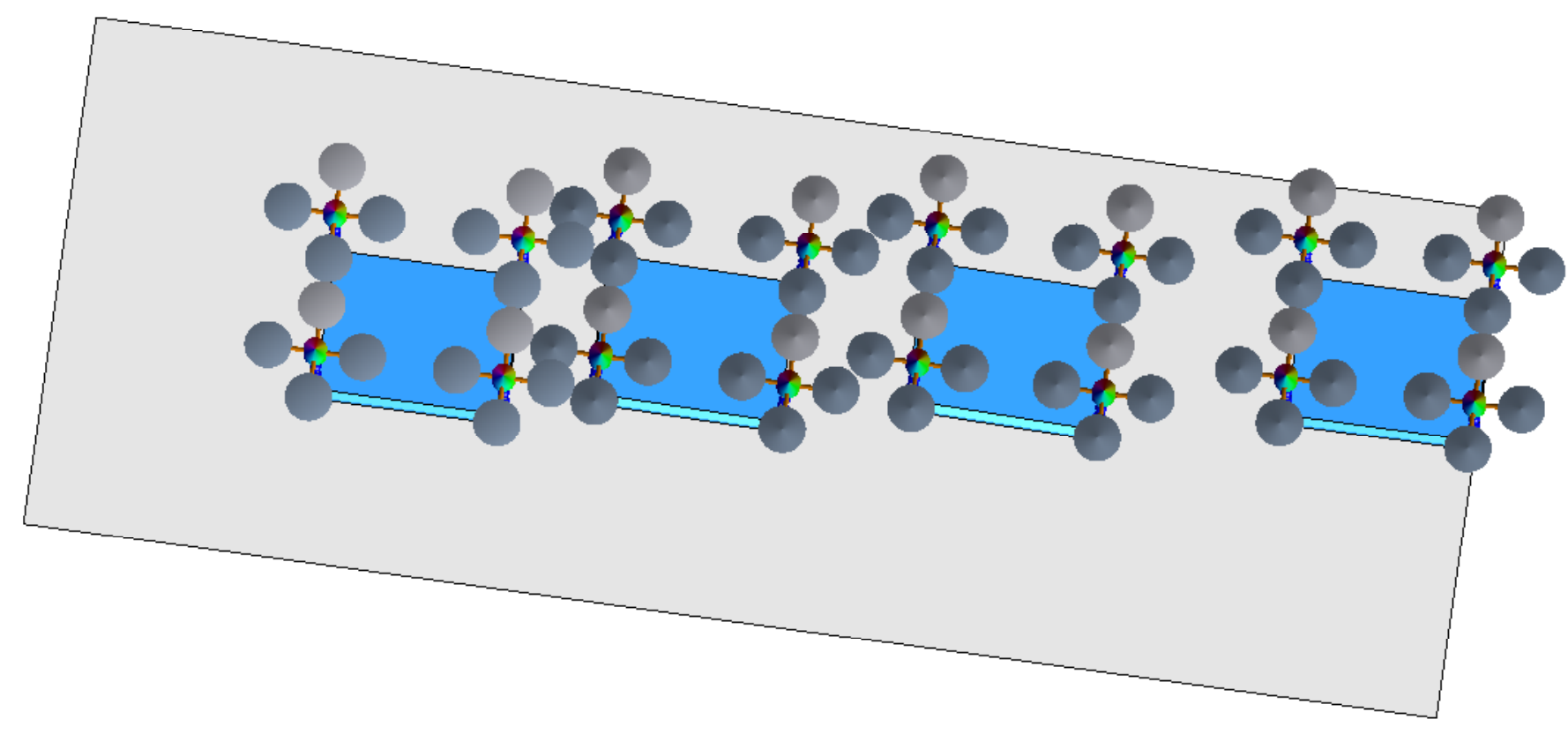}}
}
\caption{Snapshots of controlled maneuver}\label{fig:simresults1snap}
\end{figure}

\begin{figure}[h]
\centerline{\hspace{-0.5cm}
	\subfigure[Payload position $x_0$:blue, $x_{0_{d}}$:red]{
		\includegraphics[width=0.5\columnwidth]{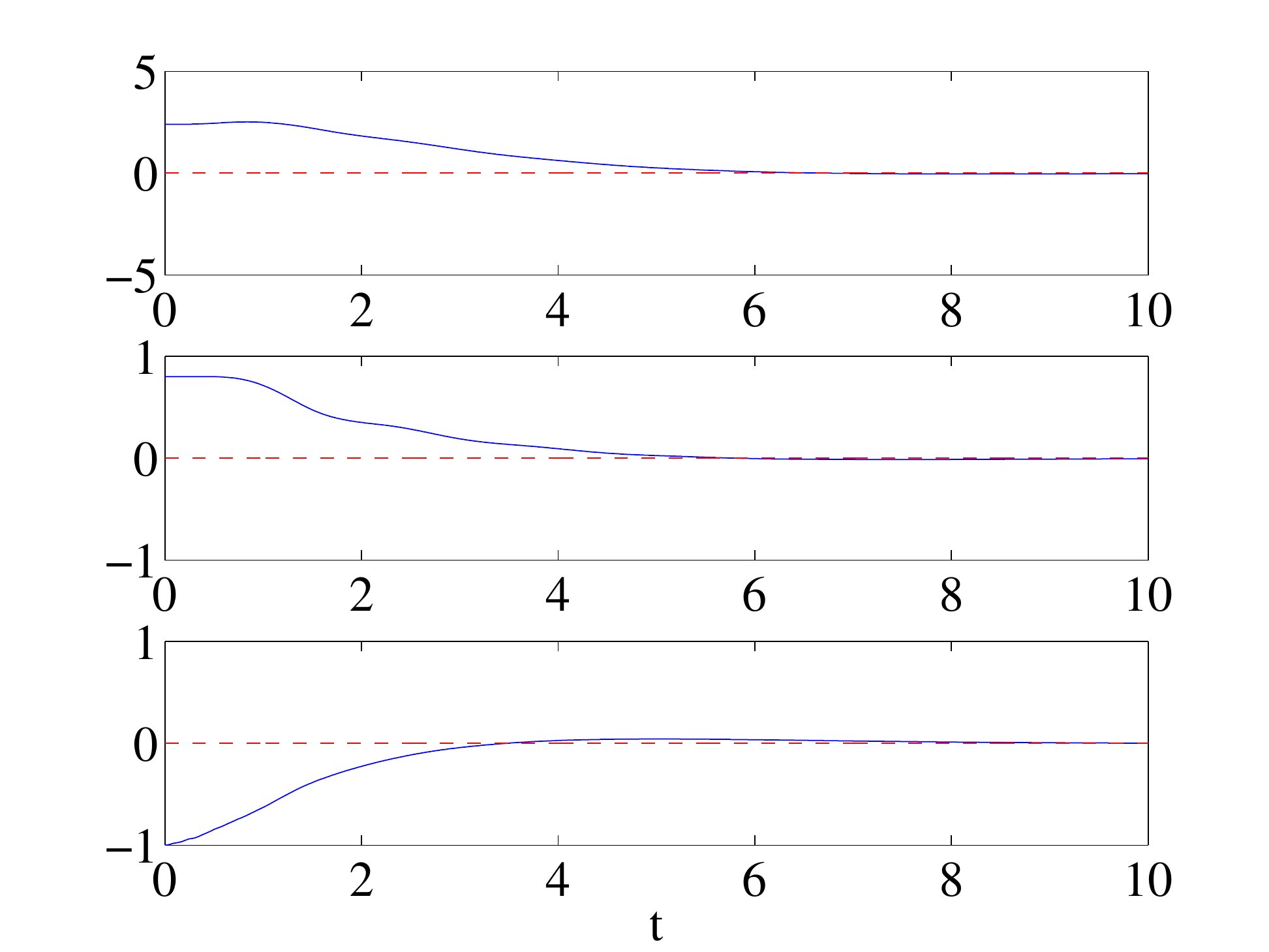}\label{fig:case2x0}}
	\subfigure[Payload velocity $v_0$:blue, $v_{0_{d}}$:red]{
		\includegraphics[width=0.5\columnwidth]{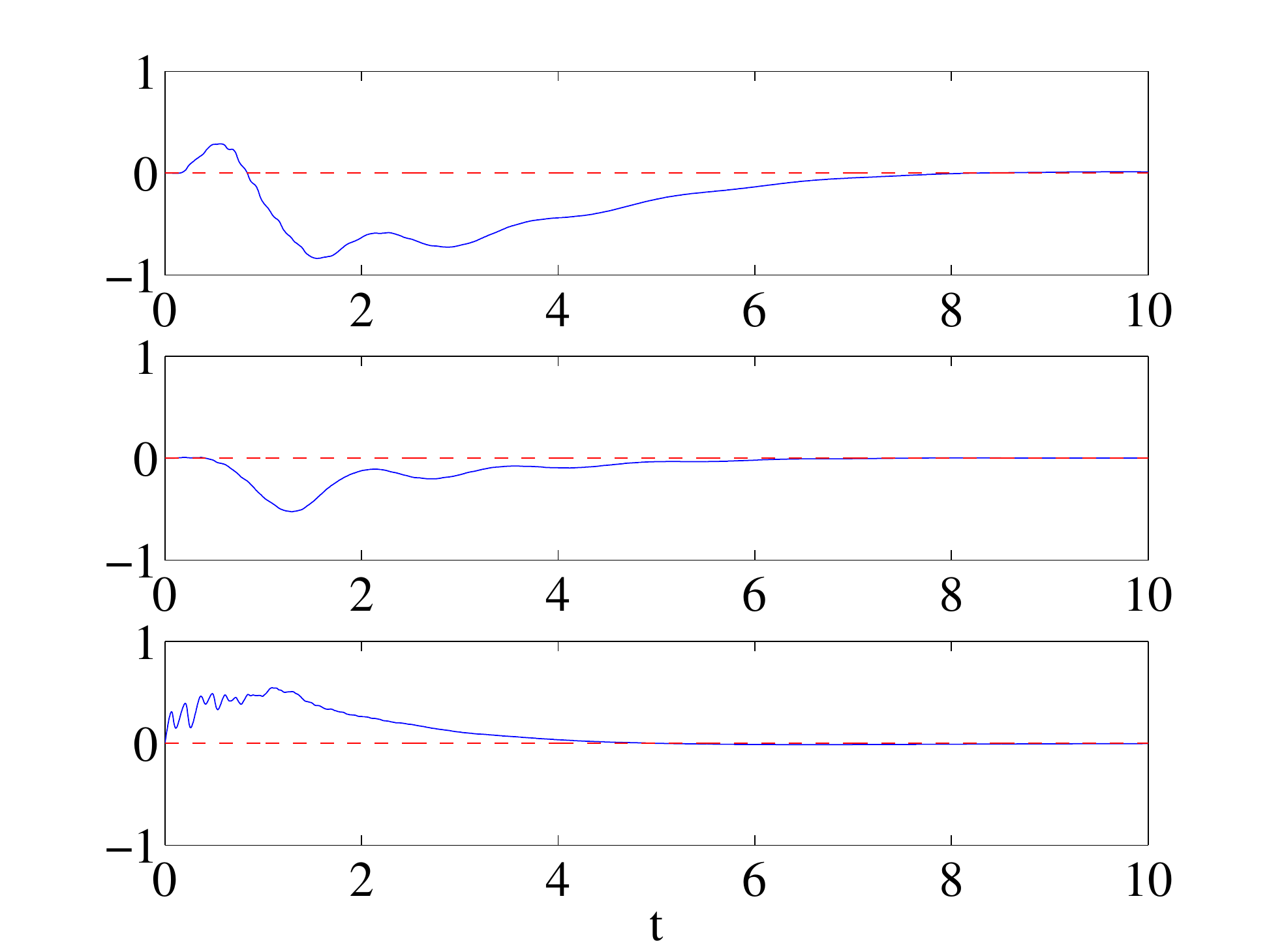}\label{fig:case2v0}}
}
\centerline{\hspace{-0.5cm}
	\subfigure[Payload angular velocity $\Omega_{0}$]{
		\includegraphics[width=0.5\columnwidth]{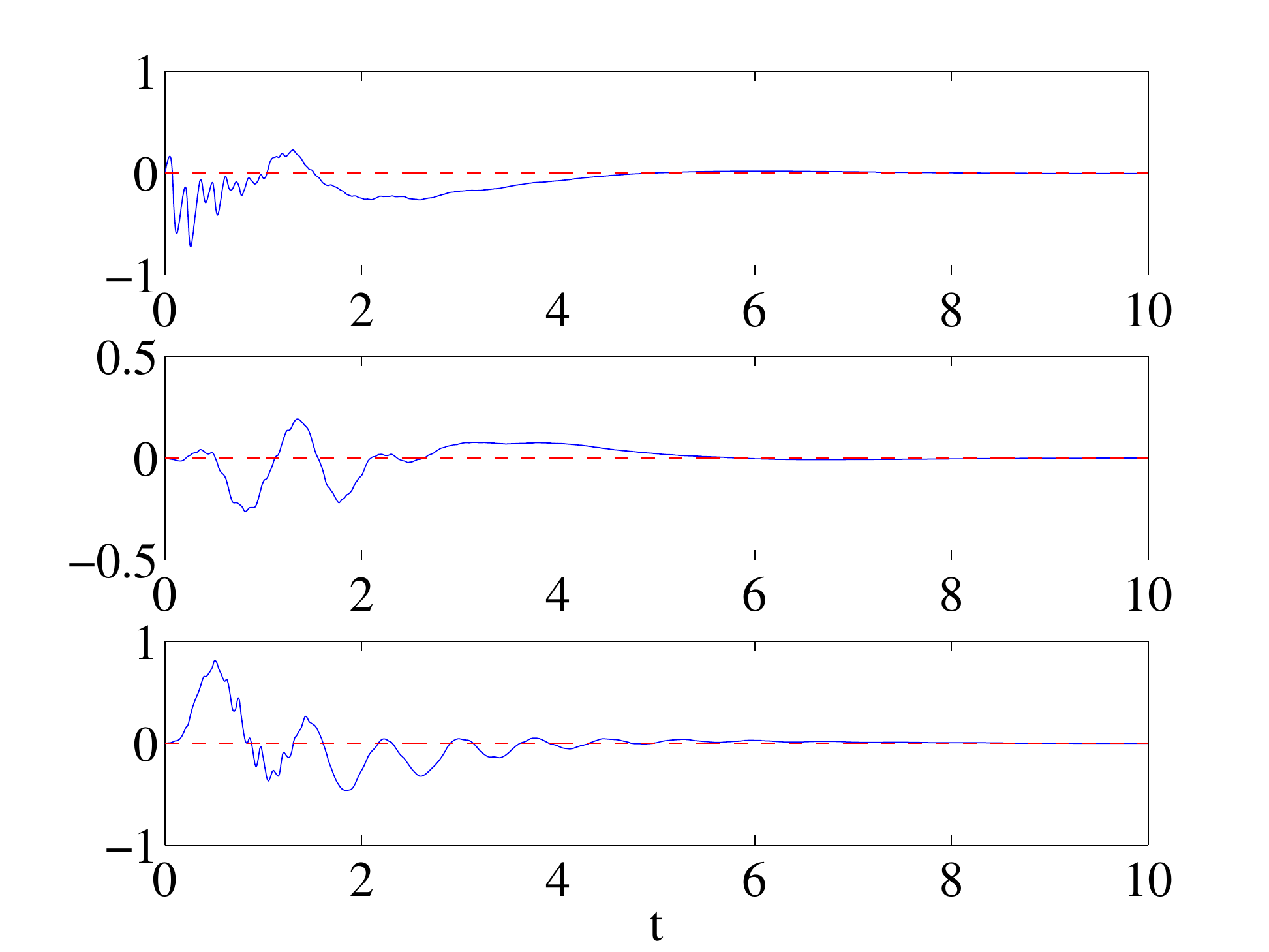}\label{fig:case2W0}}
	\subfigure[Quadrotors angular velocity errors $e_{\Omega_{i}}$]{
		\includegraphics[width=0.5\columnwidth]{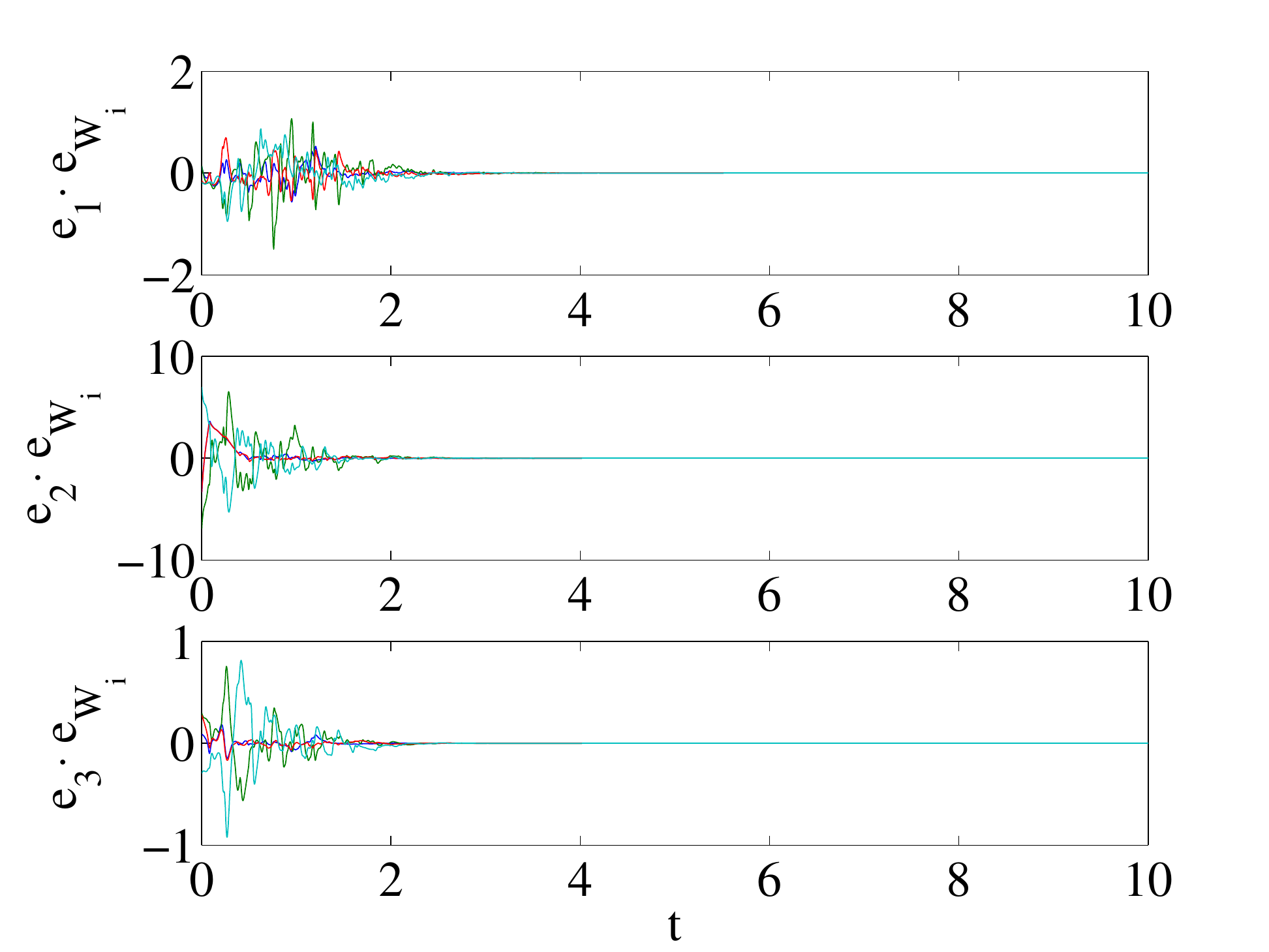}\label{fig:case2eW}}
}
\centerline{\hspace{-0.5cm}
	\subfigure[Payload attitude error $\psi_{0}$]{
		\includegraphics[width=0.5\columnwidth]{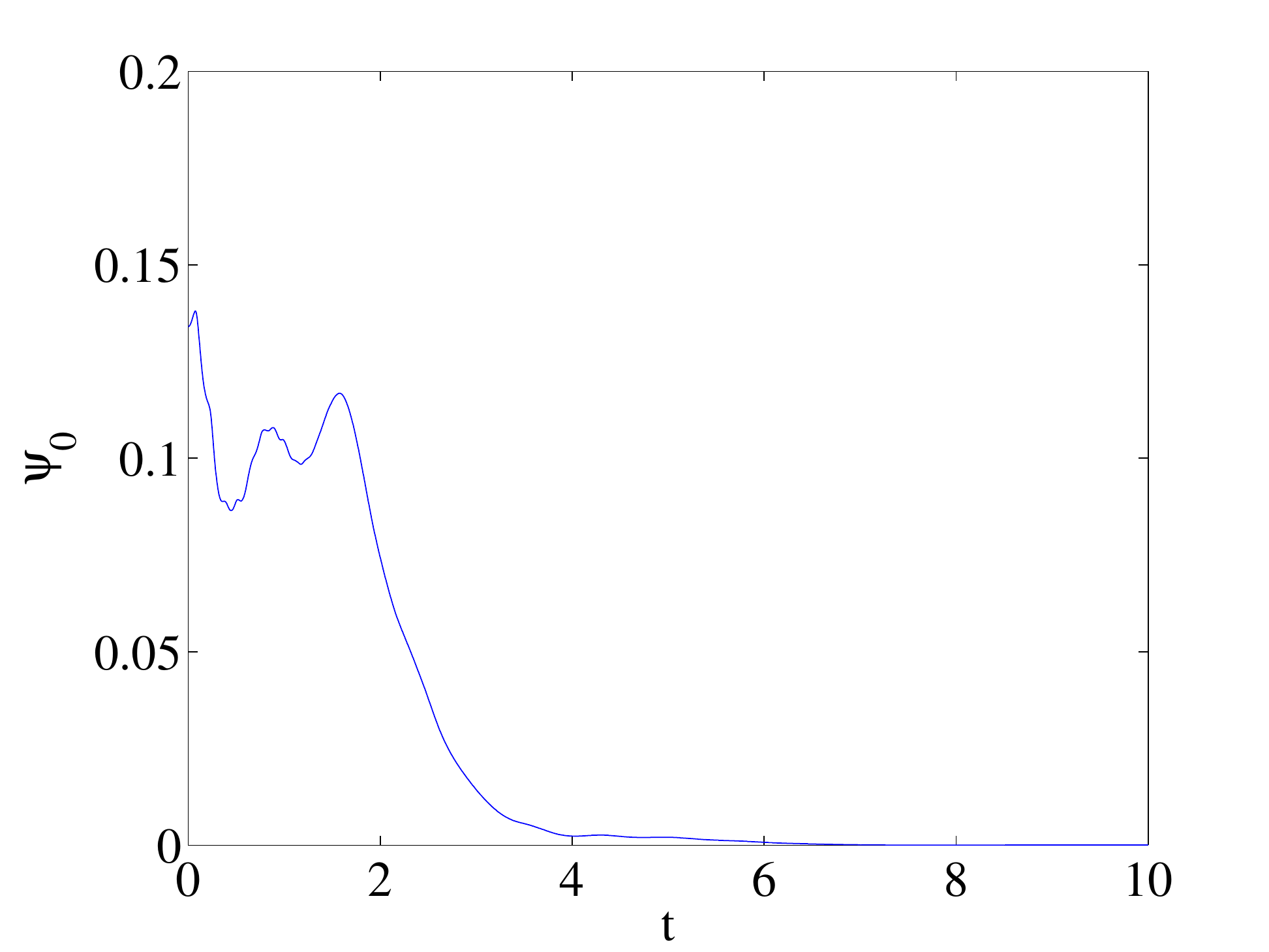}\label{fig:case2psi0}}
	\subfigure[Quadrotors attitude errors $\psi_{i}$]{
		\includegraphics[width=0.5\columnwidth]{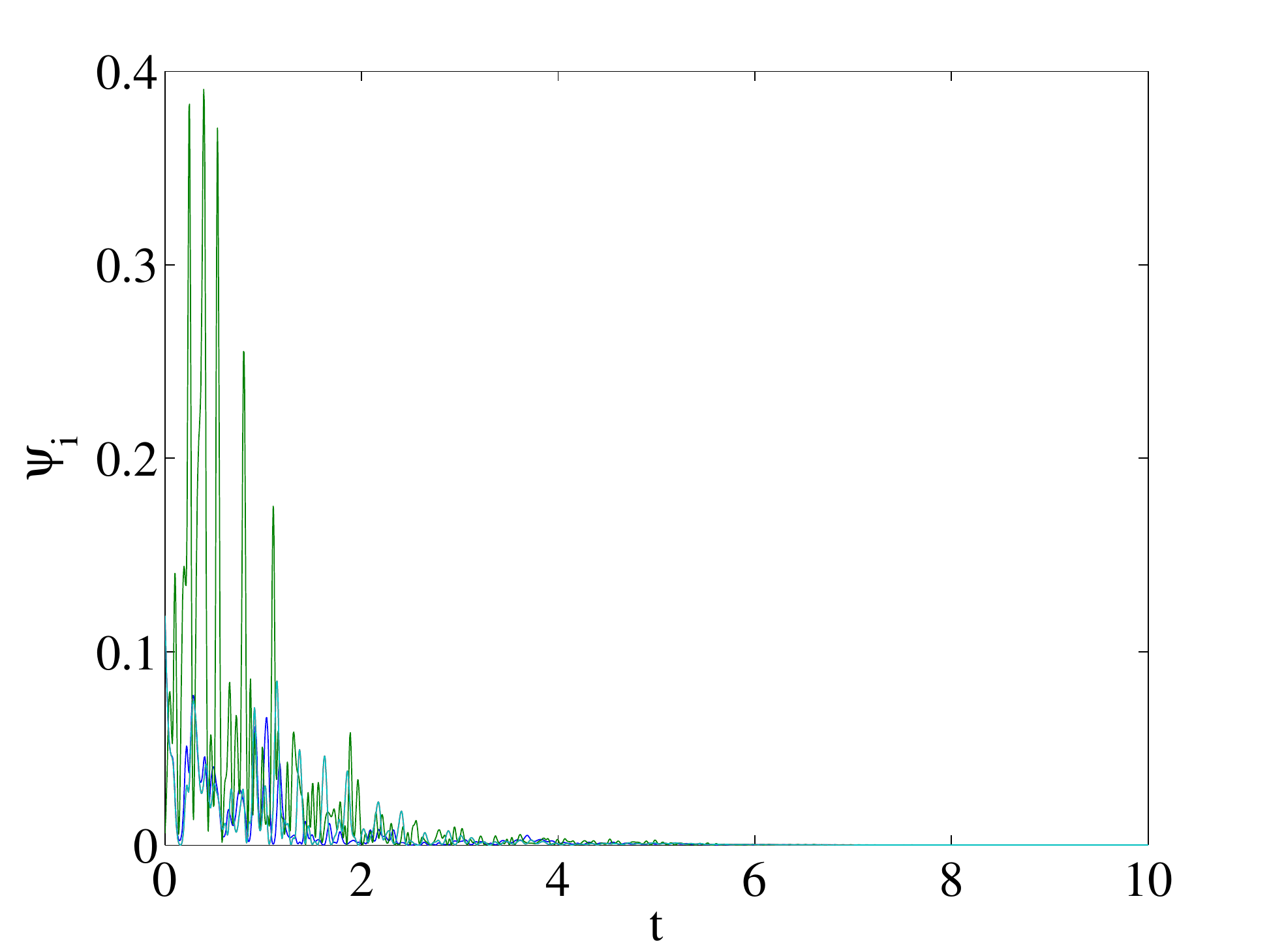}\label{fig:case2psii}}
}
\centerline{\hspace{-0.5cm}
	\subfigure[Quadrotors total thrust inputs $f_{i}$]{
		\includegraphics[width=0.5\columnwidth]{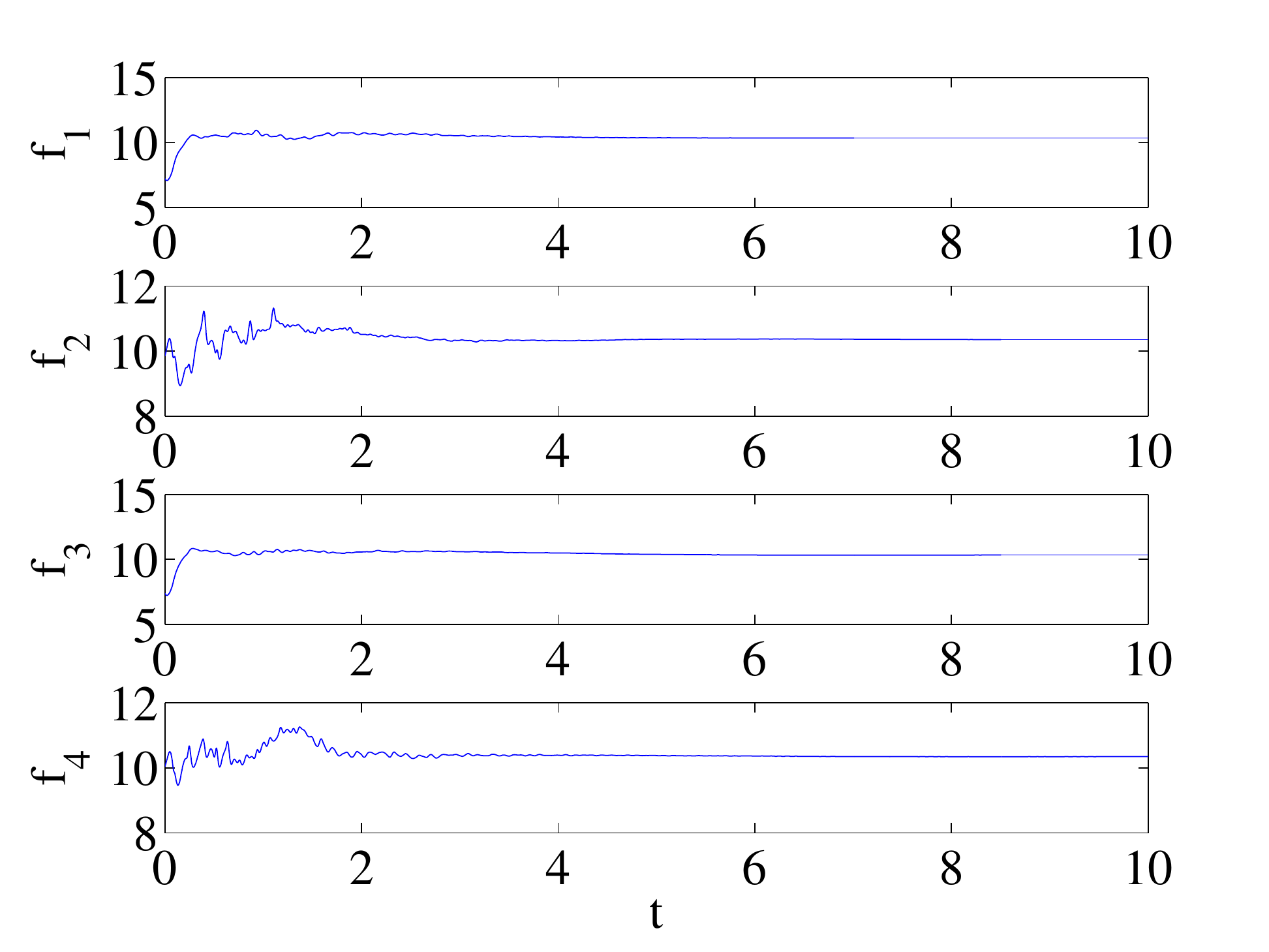}\label{fig:case2ui}}
	\subfigure[Direction error $e_{q}$, and angular velocity error $e_{\omega}$ for the links]{
		\includegraphics[width=0.5\columnwidth]{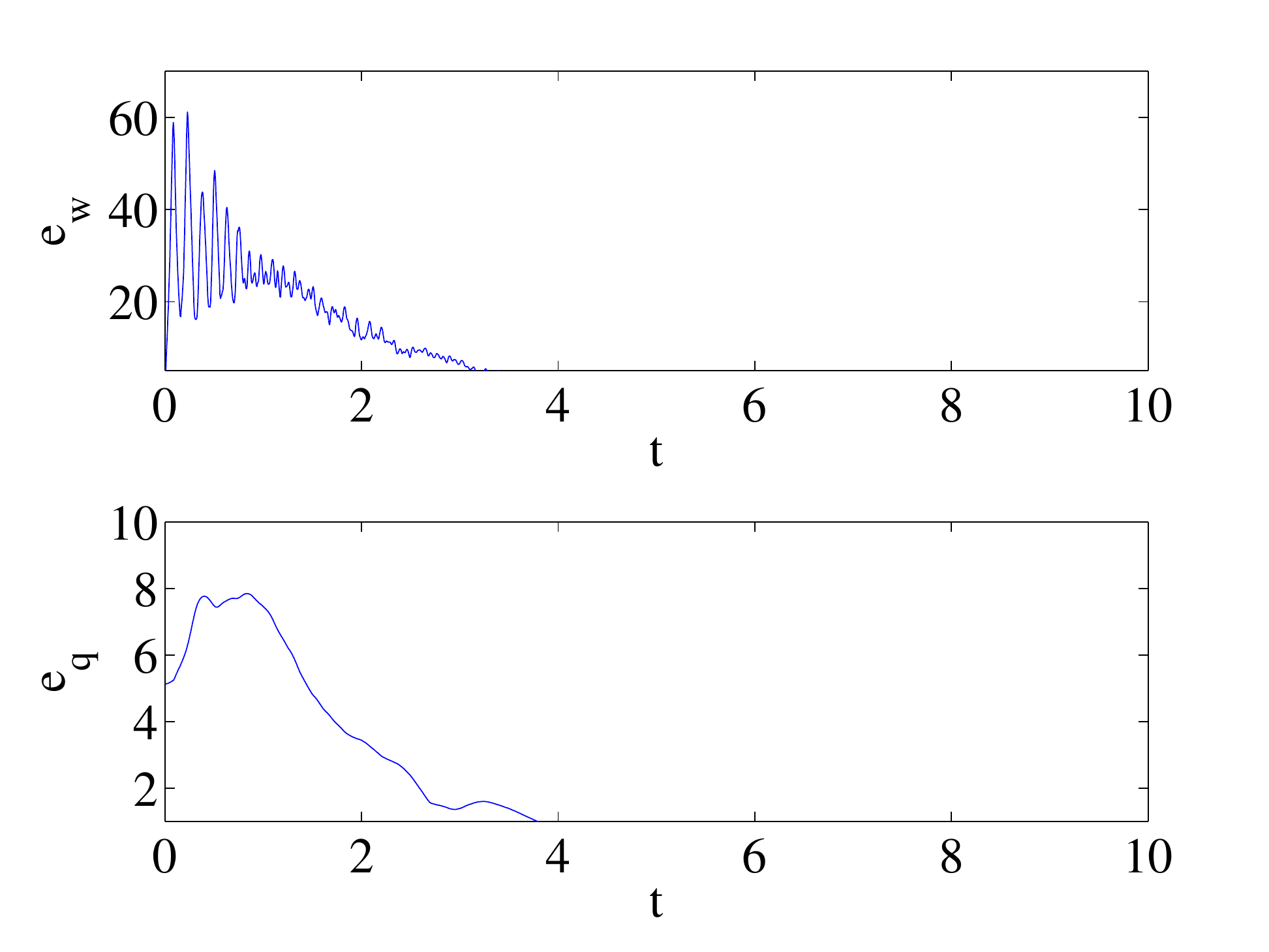}\label{fig:case2errors}}
}
\caption{Stabilization of a payload with multiple quadrotors connected with flexible cables.}\label{fig:simresults2}
\end{figure}
%%%%%%%%%%%%%%%%%%%%%%%%%%%%%%%%%%%%%%%%%
\subsection{Payload Stabilization with Large Initial Attitude Errors}
In the second case, we consider large initial errors for the attitude of the payload and quadrotors. Initially, the rigid body is tilted about its $b_{1}$ axis by $30$ degrees, and the initial direction of the links are chosen such that two cables are curved along the horizontal direction. The initial conditions are given by
\begin{gather*}
x_{0}(0)=[2.4,\; 0.8,\; -1.0]^{T},\; v_{0}(0)=0_{3\times 1},\\
\omega_{ij}(0)=0_{3\times 1},\; \Omega_{i}(0)=0_{3\times 1}\\
R_{0}(0)=R_{x}(30^\circ),\; \Omega_{0}=0_{3\times 1},
\end{gather*}
where $R_x(30^\circ)$ denotes the rotation about the first axis by $30^\circ$. The initial attitude of quadrotors are chosen as
\begin{gather*}
R_{1}(0)=R_{y}(-35^\circ),\; R_{2}(0)=I_{3\times 3},\\ 
R_{3}(0)=R_{y}(-35^\circ),\; R_{4}(0)=I_{3\times 3}.
\end{gather*}
The mass properties of quadrotors are chosen as pervious example. The payload is a box with mass $m_{0}=0.5\,\mathrm{kg}$, and its length, width, and height are $0.6$, $0.8$, and $0.2\,\mathrm{m}$, respectively. Each cable connecting the rigid body to the $i$-th quadrotor is considered to be $n_{i}=5$ rigid links. All the links have the same mass of $m_{ij}=0.01\,\mathrm{kg}$ and length of $l_{ij}=0.15\,\mathrm{m}$. Each cable is attached to the following points of the payload
\begin{gather*}
\rho_{1}=[0.3,\; -0.4,\; -0.1]^T\,\mathrm{m},\; \rho_{2}=[0.3,\; 0.4,\; -0.1]^T\,\mathrm{m},\\
\rho_{3}=[-0.3,\; -0.4,\; -0.1]^T\,\mathrm{m},\; \rho_{4}=[-0.3,\; 0.4,\; -0.1]^T\,\mathrm{m}.
\end{gather*}

The payload mass is $m=1.0\,\mathrm{kg}$ , and its length, width, and height are $1.0$, $1.2$, and $0.2\,\mathrm{m}$, respectively.

\begin{figure}[h]
\centerline{
	\subfigure[$t=0$ Sec.]{
		\includegraphics[width=0.3\columnwidth]{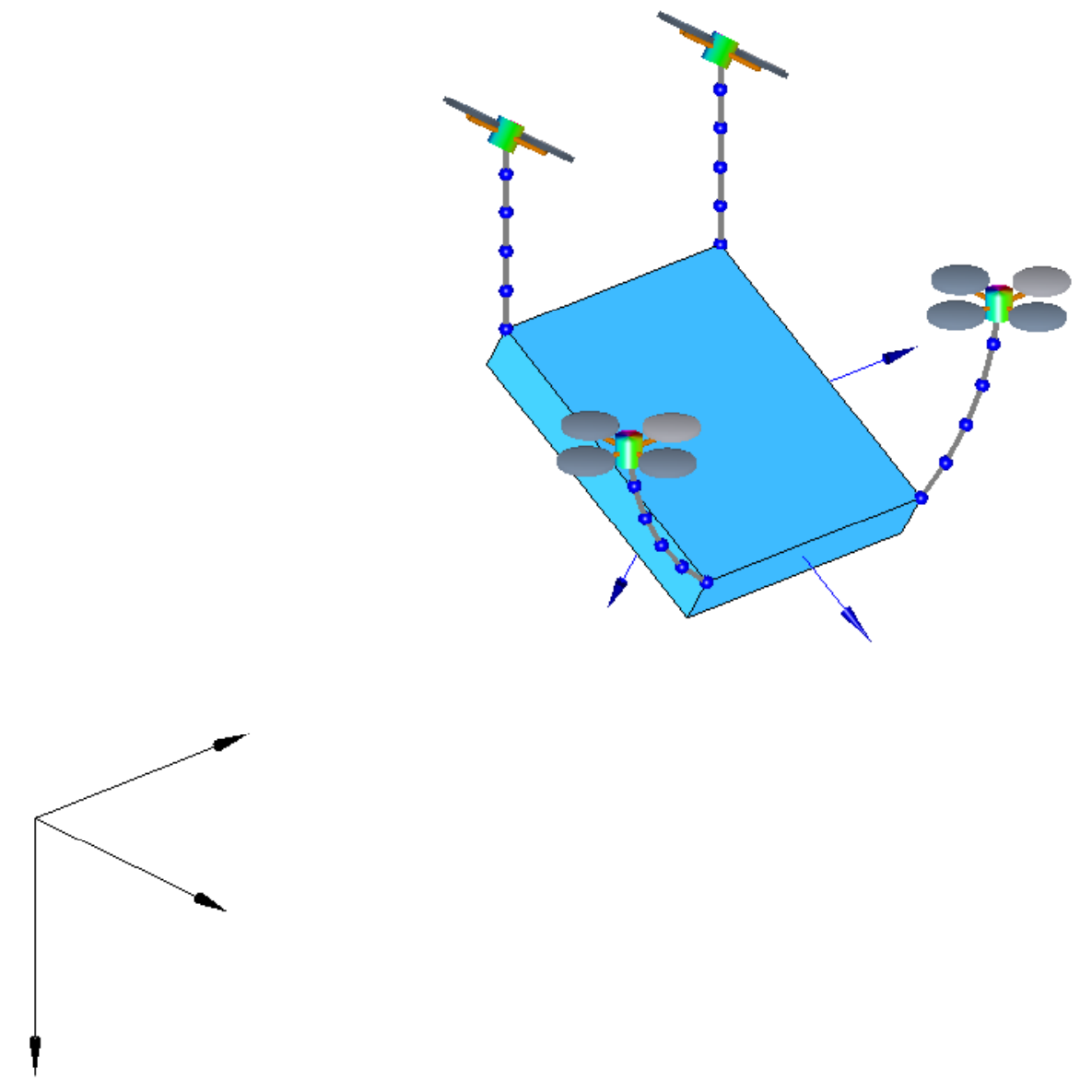}}
	\subfigure[$t=0.14$ Sec.]{
		\includegraphics[width=0.3\columnwidth]{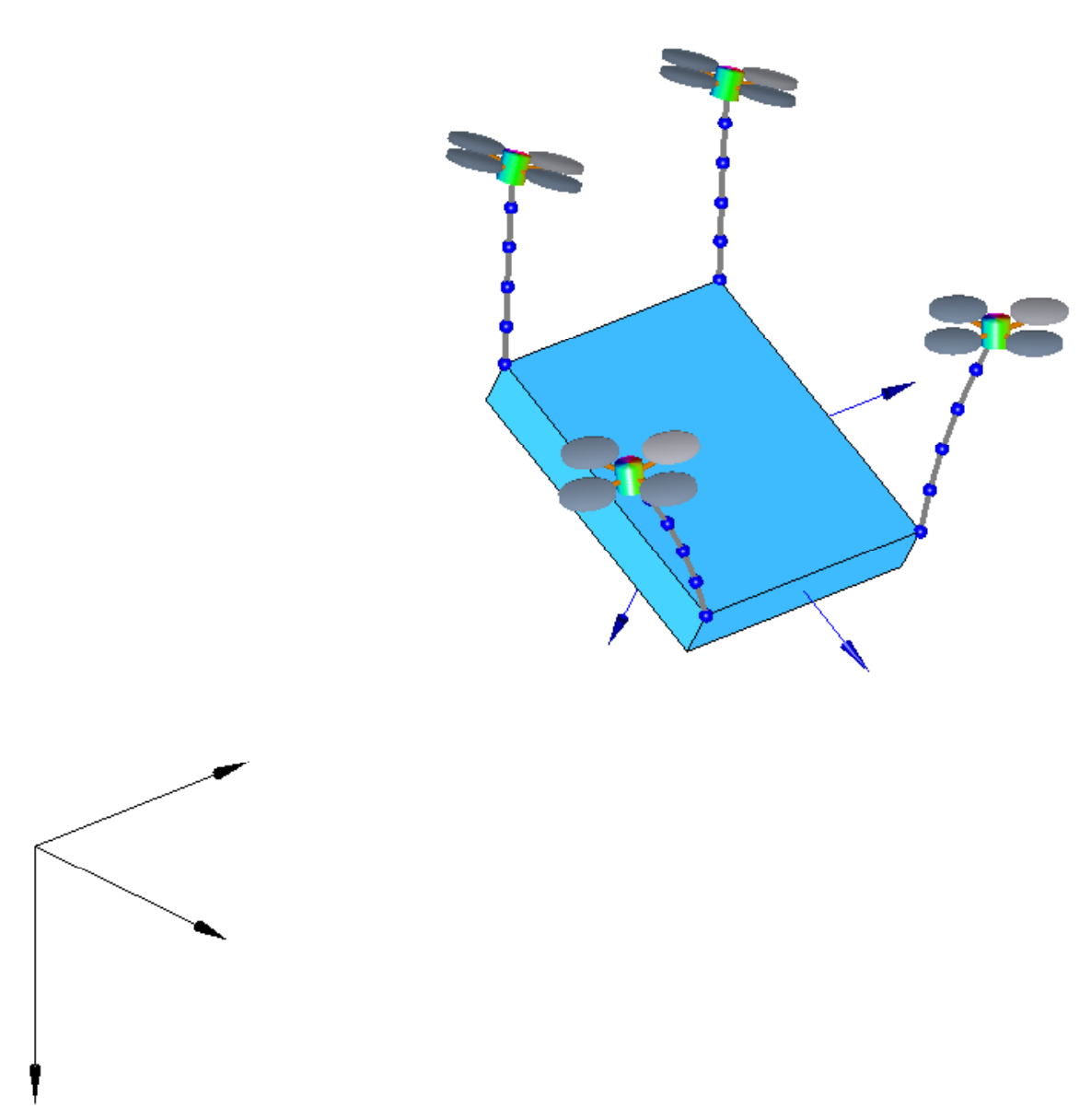}}
		\subfigure[$t=0.30$ Sec.]{
		\includegraphics[width=0.3\columnwidth]{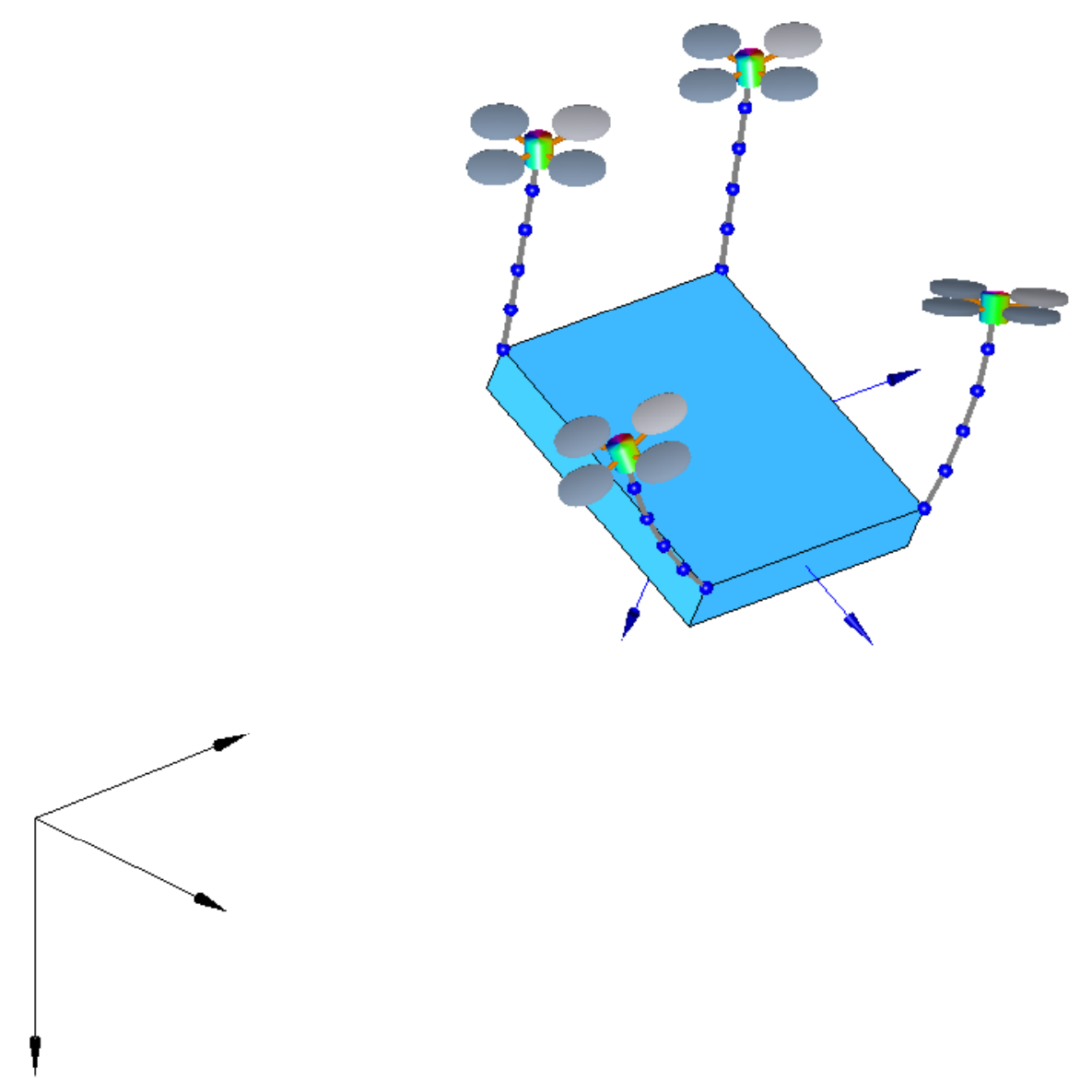}}
}
\centerline{
	\subfigure[$t=0.68$ Sec.]{
		\includegraphics[width=0.3\columnwidth]{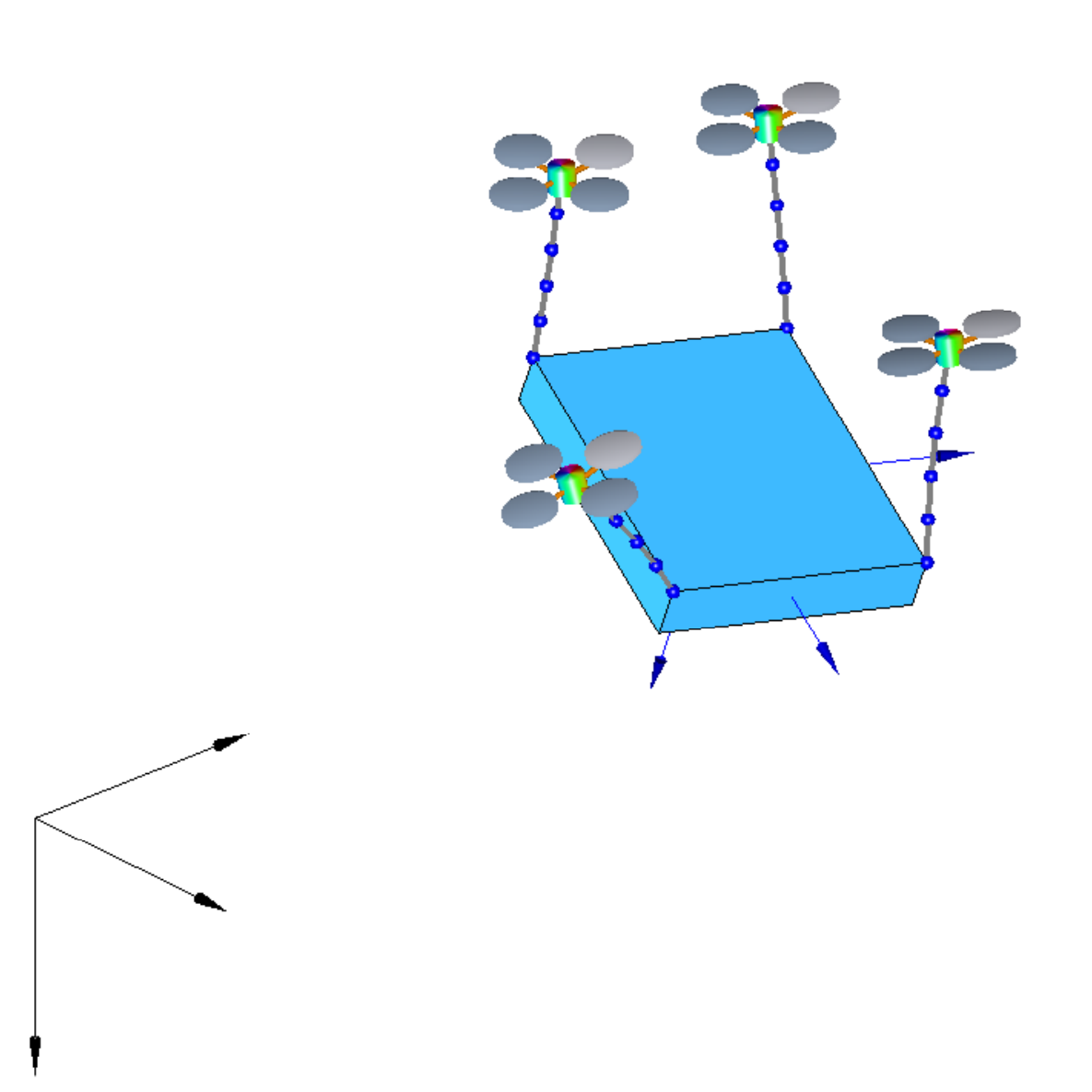}}
	\subfigure[$t=1.10$ Sec.]{
		\includegraphics[width=0.3\columnwidth]{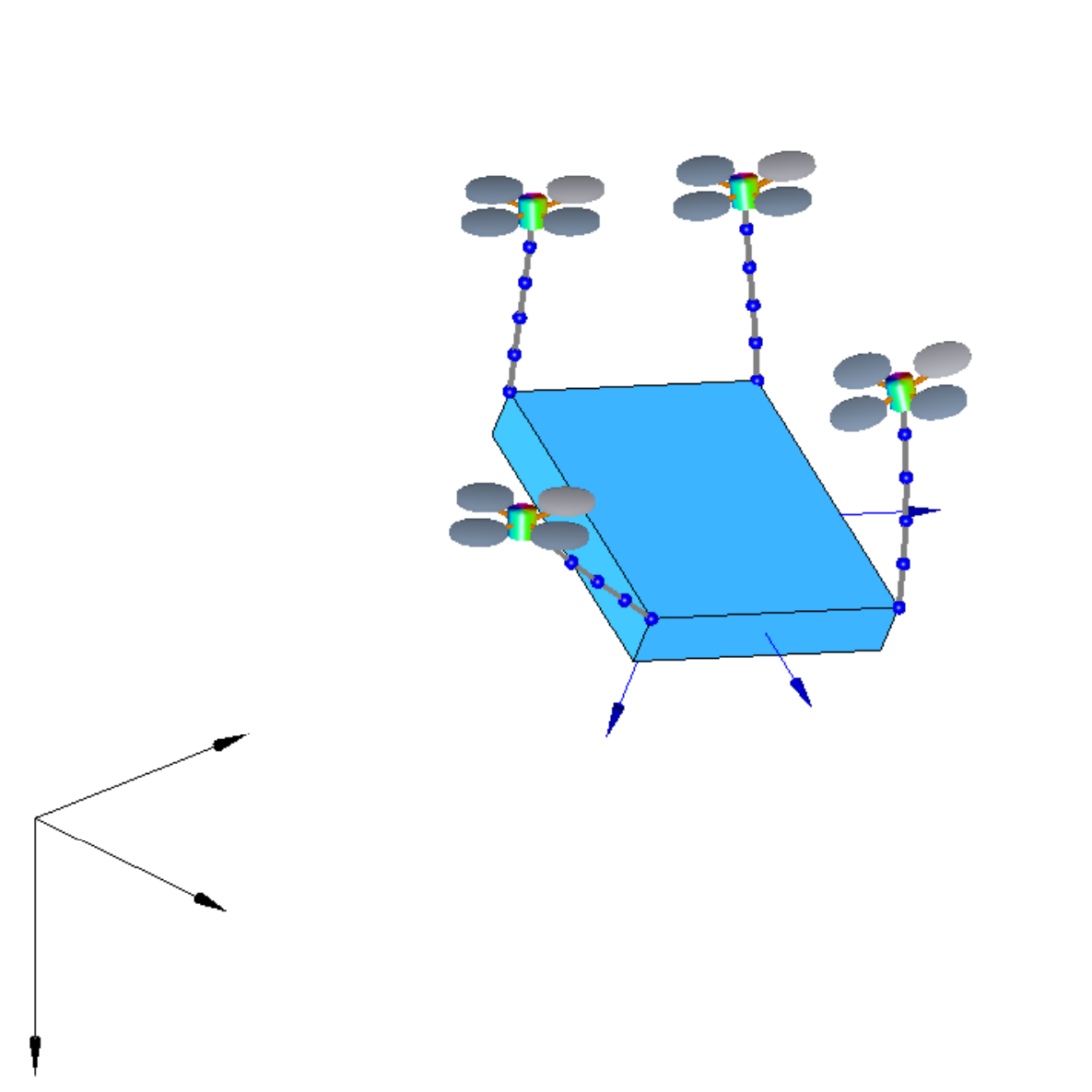}}
		\subfigure[$t=1.36$ Sec.]{
		\includegraphics[width=0.3\columnwidth]{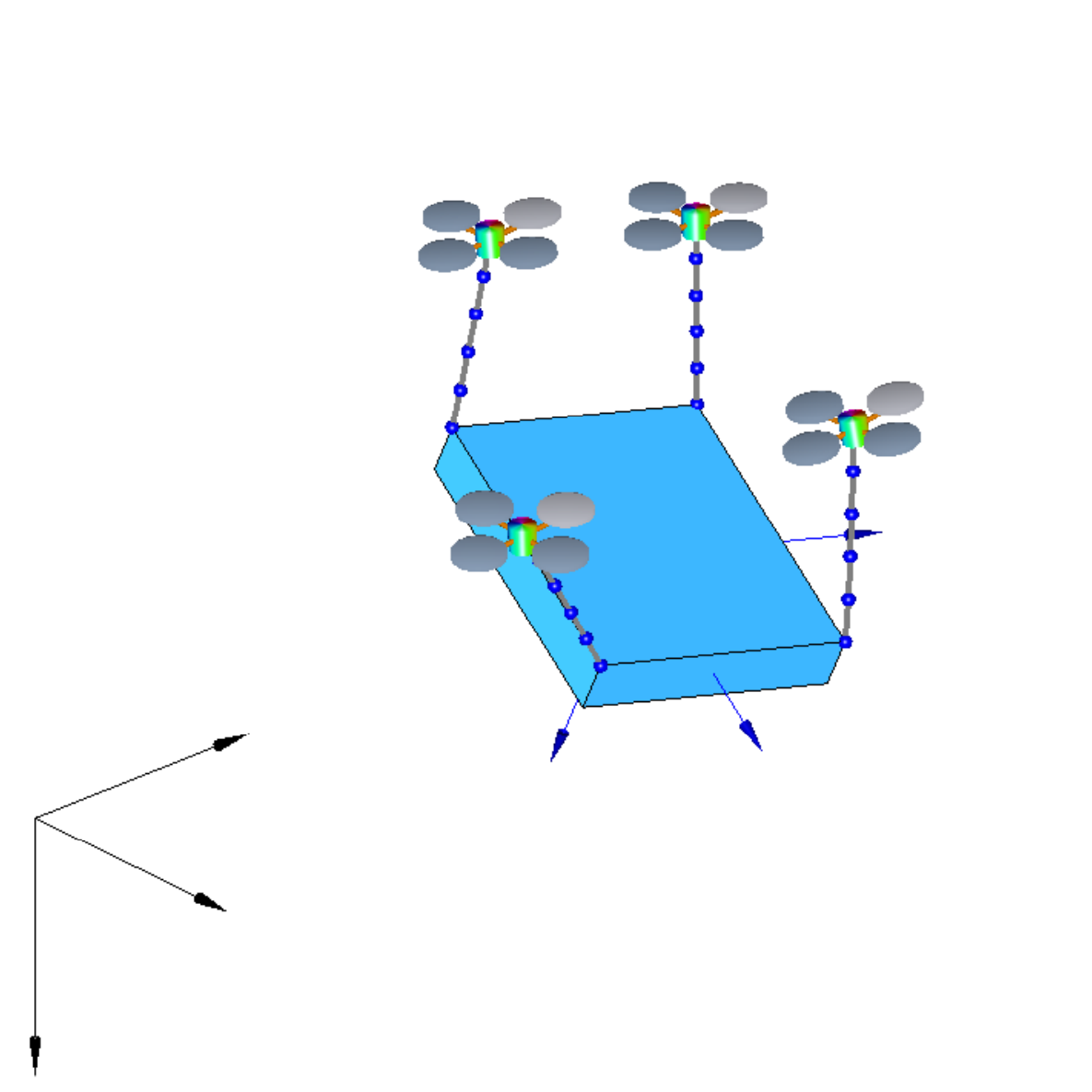}}
}
\centerline{
	\subfigure[$t=1.98$ Sec.]{
		\includegraphics[width=0.3\columnwidth]{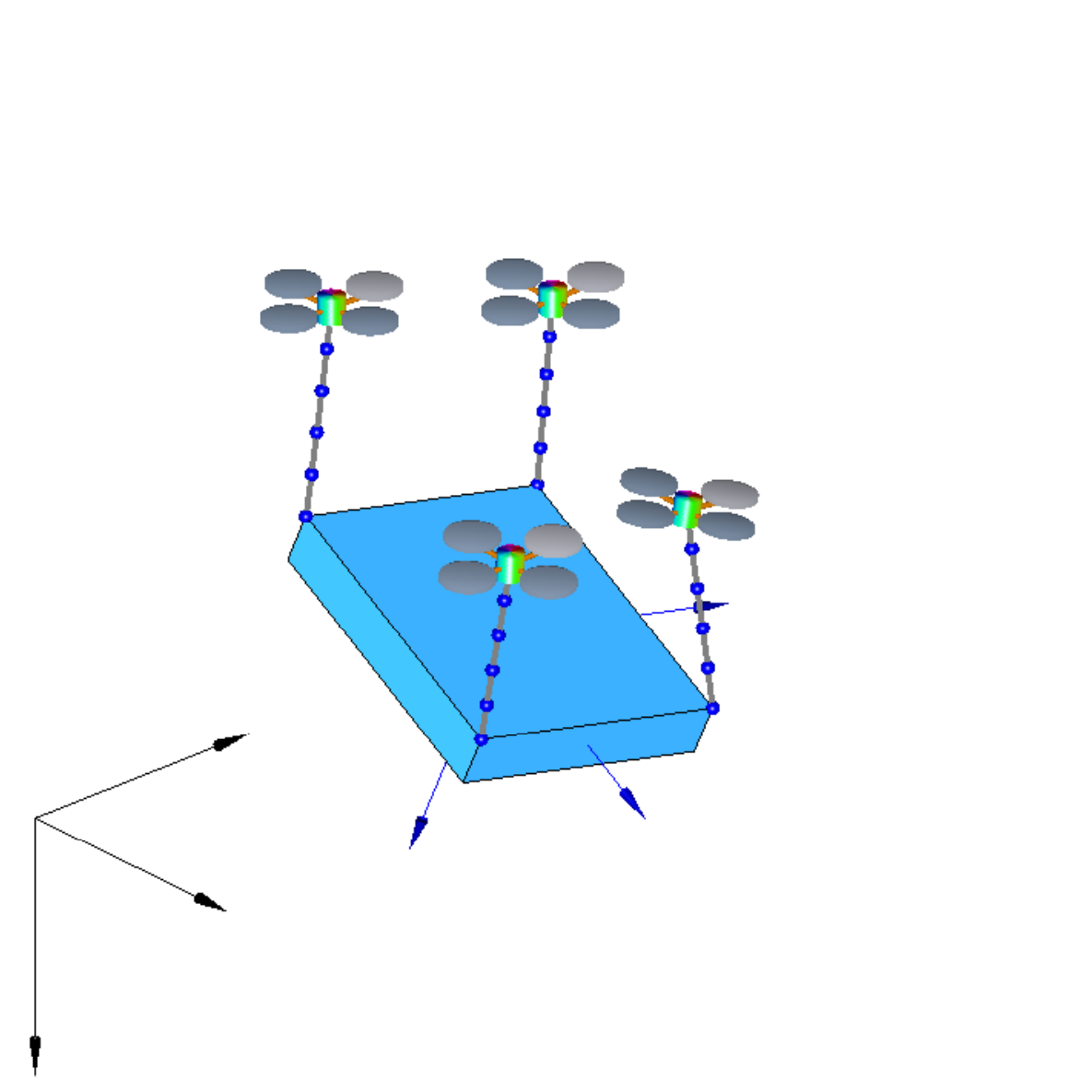}}
	\subfigure[$t=3.48$ Sec.]{
		\includegraphics[width=0.3\columnwidth]{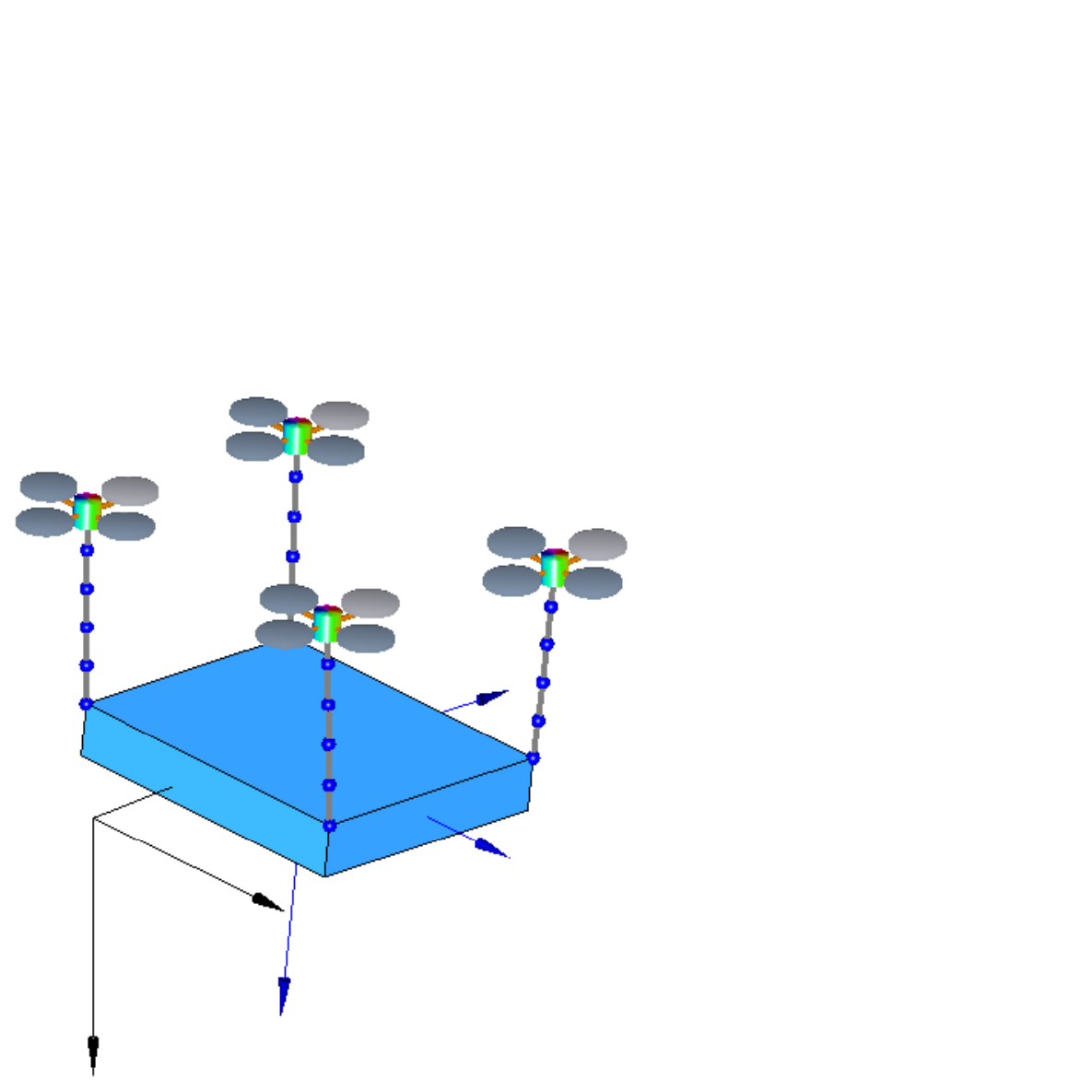}}
		\subfigure[$t=10$ Sec.]{
		\includegraphics[width=0.3\columnwidth]{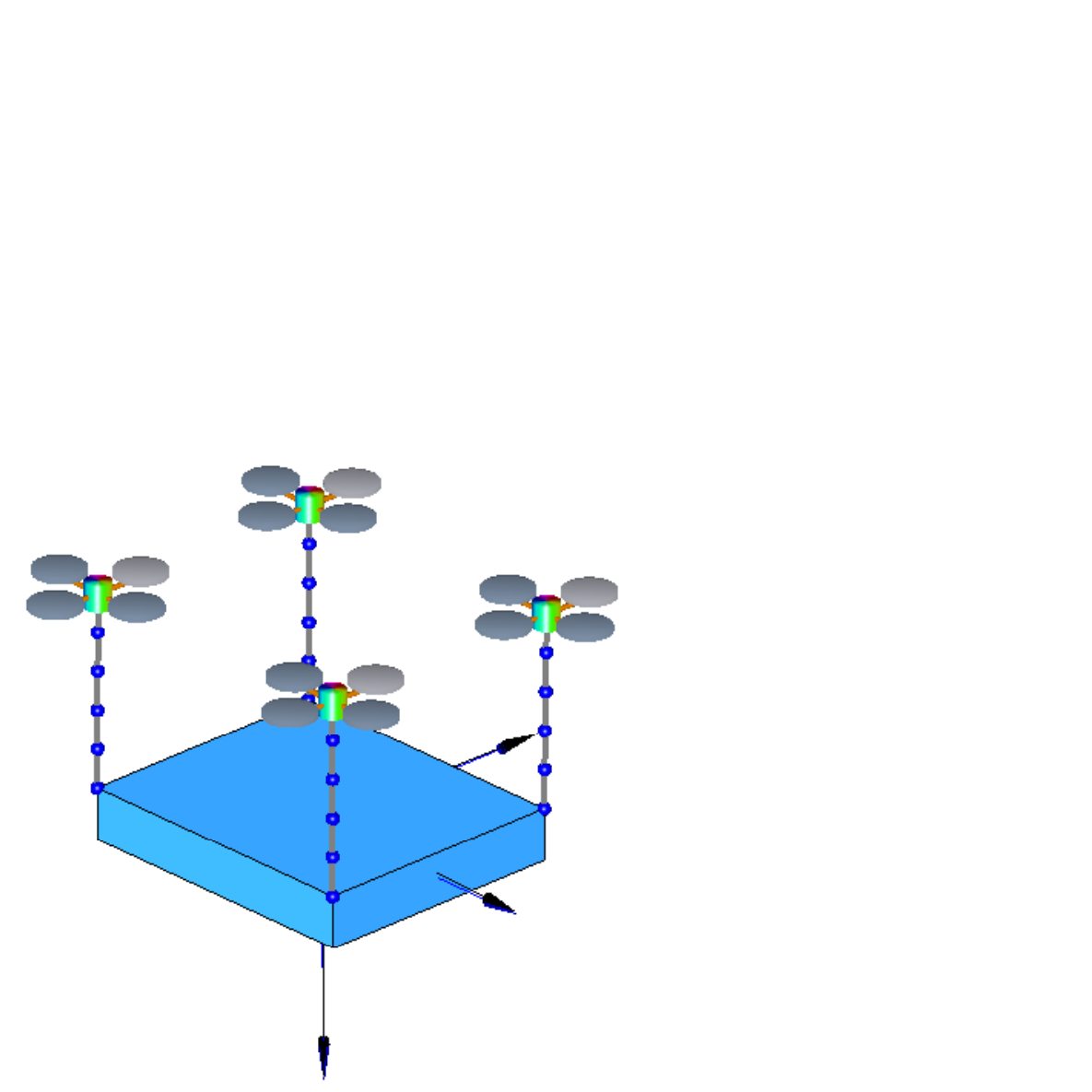}}
}
\caption{Snapshots of the controlled maneuver. A short animation is also available at  \href{https://www.youtube.com/watch?v=j14tSuHd8oA}{https://www.youtube.com/watch?v=j14tSuHd8oA}}
\label{fig:simresults3snap}
\end{figure}

Figure \ref{fig:simresults2} illustrates the tracking errors, and the total thrust of each quadrotor. Snapshots of the controlled maneuvers is also illustrated at Figure \ref{fig:simresults3snap}. It is shown that the proposed controller is able to stabilize the payload and cables at their desired configuration even from the large initial attitude errors.

\section{EXPERIMENT}\label{sec:ER}
In this section, an experimental setup is described and the proposed geometric nonlinear controller is validated with experiments.
\subsection{Hardware Description}\label{HD}
The quadrotor UAV developed at the flight dynamics and control laboratory at the George Washington University is shown at Figure~\ref{fig:Quad}, and its parameters are the same as described as the previous section. 
The angular velocity is measured from inertial measurement unit (IMU) and the attitude is obtained from IMU data. Position of the UAV is measured from motion capture system (Vicon) and the velocity is estimated from the measurement. Ground computing system receives the Vicon data and send it to the UAV via XBee. The Gumstix is adopted as micro computing unit on the UAV. The flight control software has three main threads, namely Vicon thread, IMU thread, and control thread. The Vicon thread receives the Vicon measurement and estimates linear velocity of the quadrotor. In IMU thread, it receives the IMU measurement and estimates the attitude. The last thread handles the control outputs at each time step. Also, control outputs are calculated at 120Hz which is fast enough to run any kind of aggressive maneuvers. Information flow of the system is illustrated in Figure \ref{fig:information_flow}. 

We developed an accurate CAD model as shown in Figure~\ref{fig:cadmodel} to identify several parameters of the quadrotor, such as moment of inertia and center of mass. Furthermore, a precise rotor calibration is performed for each rotor, with a custom-made thrust stand as shown in Figure~\ref{fig:stand} to determine the relation between the command in the motor speed controller and the actual thrust. For various values of motor speed commands, the corresponding thrust is measured, and those data are fitted with a second order polynomial. 
\begin{figure}[h]
\centerline{
	\subfigure[Hardware configuration]{
\setlength{\unitlength}{0.1\columnwidth}\scriptsize
\begin{picture}(7,4)(0,0)
\put(0,0){\includegraphics[width=0.7\columnwidth]{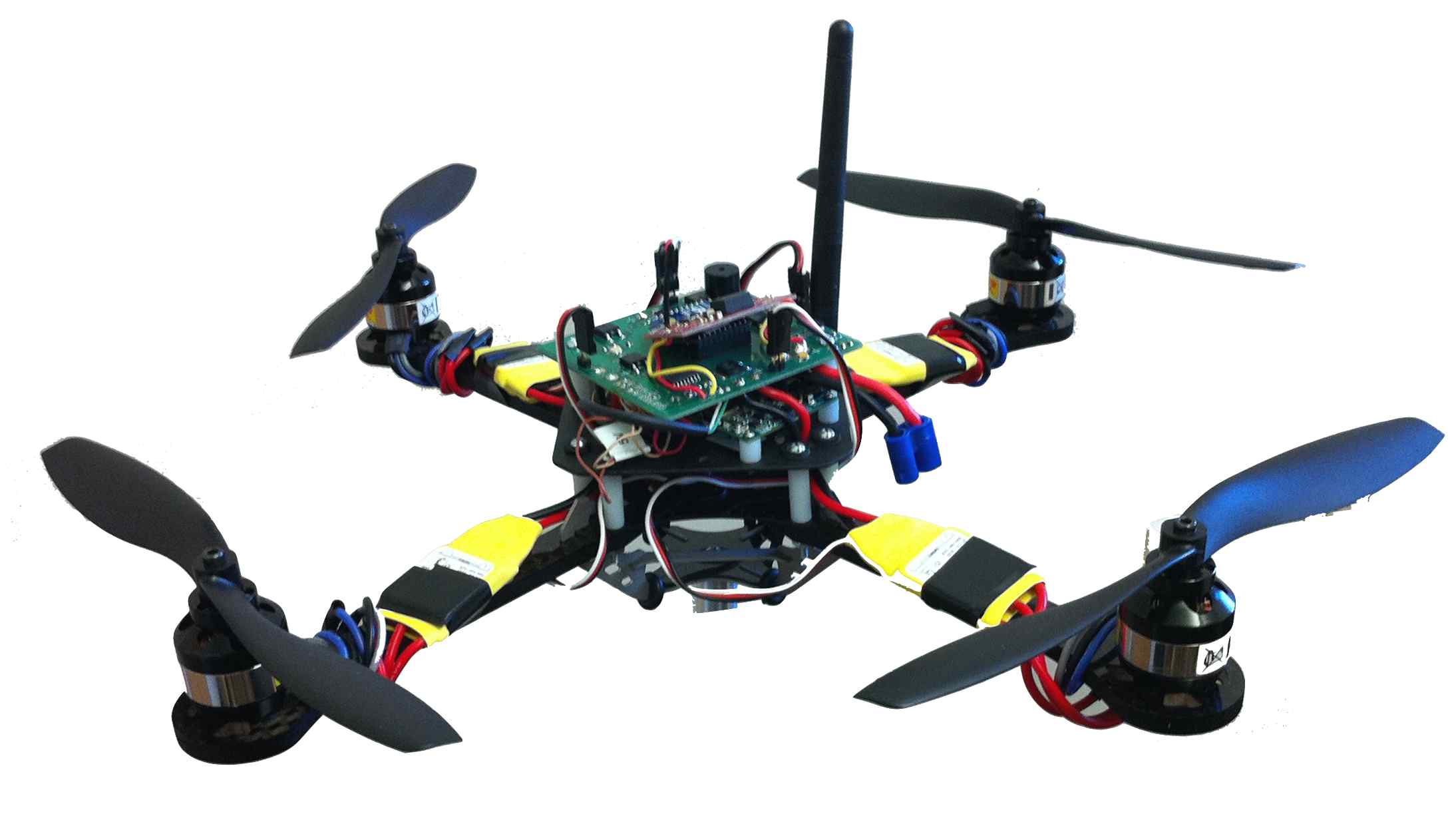}}
\put(1.95,3.2){\shortstack[c]{OMAP 600MHz\\Processor}}
\put(2.3,0){\shortstack[c]{Attitude sensor\\3DM-GX3\\ via UART}}
\put(0.85,1.4){\shortstack[c]{BLDC Motor\\ via I2C}}
\put(0.1,2.5){\shortstack[c]{Safety Switch\\XBee RF}}
\put(4.3,3.2){\shortstack[c]{WIFI to\\Ground Station}}
\put(5,2.0){\shortstack[c]{LiPo Battery\\11.1V, 2200mAh}}
\end{picture}\label{fig:Quad}}
	\subfigure[Motor calibration setup]{
	\includegraphics[width=0.27\columnwidth]{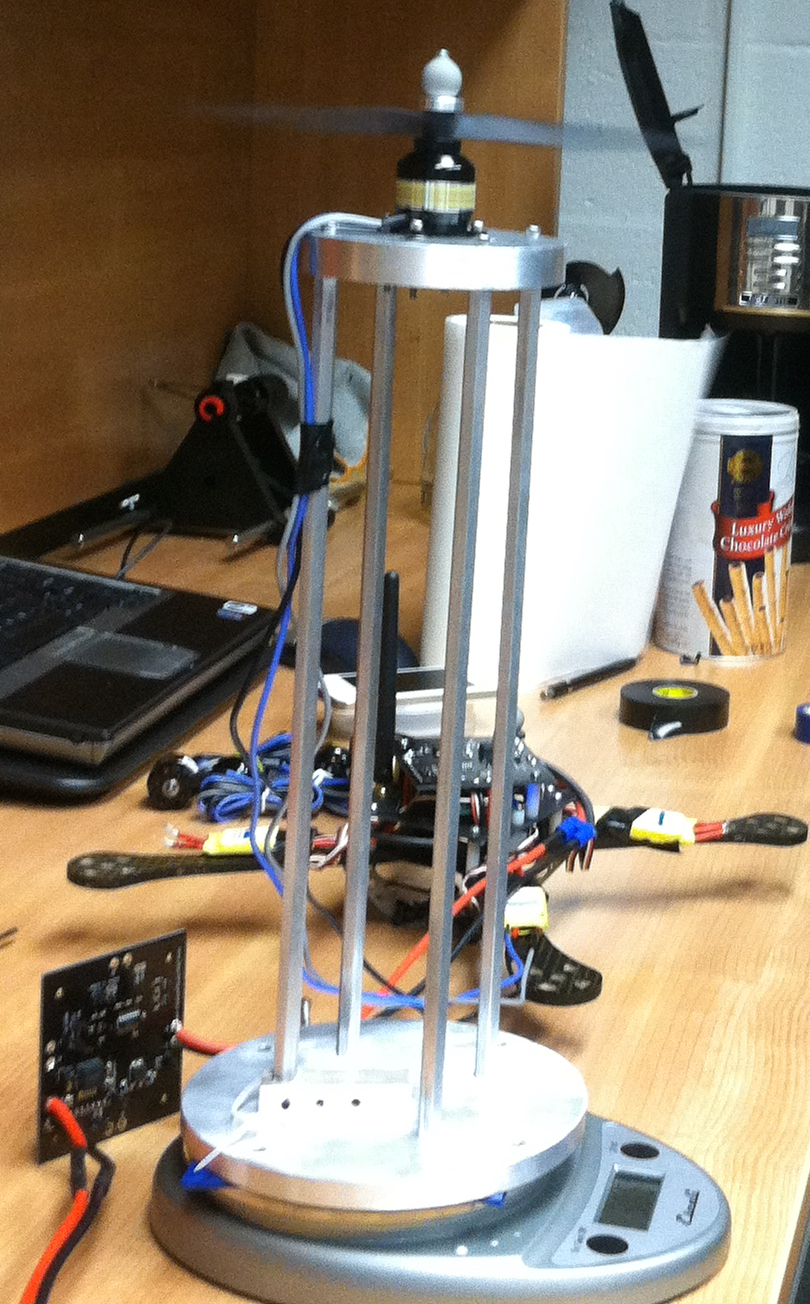}\label{fig:stand}}
}
\caption{Hardware development for each quadrotor UAV}
\end{figure}
\begin{figure}[h]
\centerline{
	\includegraphics[width=0.8\columnwidth]{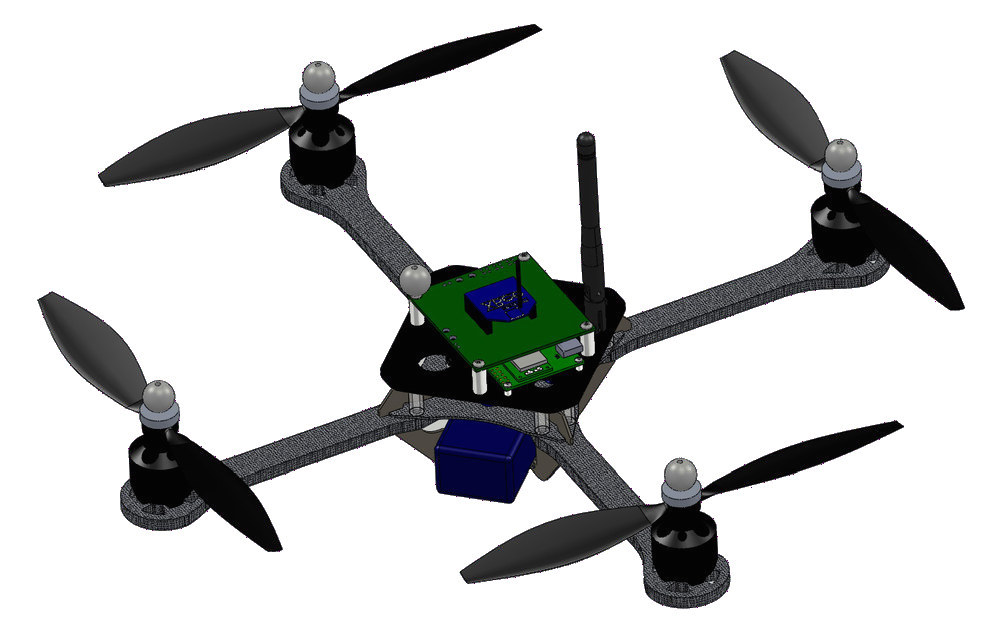}}
\caption{CAD Model}\label{fig:cadmodel}
\end{figure}
\begin{figure}[h]
\centerline{
%	\subfigure[Attitude control testbed]{
	\includegraphics[width=1.0\columnwidth]{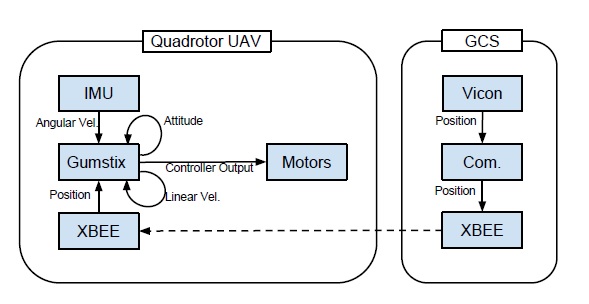}}
\caption{Information flow of overall system}\label{fig:information_flow}
\end{figure}

\subsection{Stabilizing a Rod with Two Quadrotors}
As a special case of rigid body payload, we considered a rod as a payload for experiment as shown in the Figures~\ref{fig:experimentstarts}. Two quadrotors are enough for this experiment to control the position of the payload. 

\begin{figure}[h]
\centerline{
\subfigure[Experiment]{
		\includegraphics[width=0.45\columnwidth]{Figures/fig00}}
	\subfigure[Simulation]{
		\includegraphics[width=0.5\columnwidth]{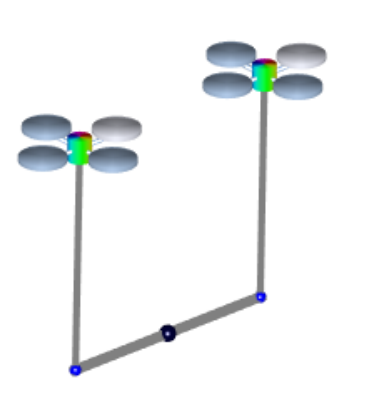}
		\put(-115,105){\shortstack[c]{$m_{1},J_{1},x_{1}$}}
		\put(-90,46){\shortstack[c]{$\downarrow q_{1},l_{1}$}}
		\put(-65,10){\shortstack[c]{$m_{0},x_{0},q_{0}\rightarrow$}}
		\put(-55,35){\shortstack[c]{$l_{0}$}}
		\put(-85,25){\shortstack[c]{$l_{0}$}}
		\put(-30,60){\shortstack[c]{$\downarrow q_{2},l_{2}$}}
		\put(-55,125){\shortstack[c]{$m_{2},J_{2},x_{2}$}}}
}
\caption{Two quadrotors transporting a rod}\label{fig:experimentstarts}
\end{figure}

We prepared a benchmark and a proposed controller case for this experiment. In both cases, two quadrotors are employed to hover at a fixed position initially while holding a rigid body rod which is at the equilibrium of the whole system. Then we utilized a wire to pull the payload and releasing it to simulate the disturbance. The performance of the proposed controller is then compared to the situation where there is no active controller specially works to stabilize the payload (benchmark). Both cables have length of $l_{1},l_{2}=1.3$ $m$ and rod has mass and length of $m_{0}=0.52$ $kg$ and $2l_{0}=2.05$ $m$ respectively. Each quadrotor has mass of $m_{1}=m_{2}=0.755$ $kg$ and the following moment of inertia which is obtained from the CAD model
\begin{align*}
J_{1},J_{2} =
\begin{bmatrix}
    5.5711 & 0.0618 & -0.0251\\
    0.06177 & 5.5757 & 0.0101\\
    -0.02502 & 0.01007 & 1.05053
\end{bmatrix}\times 10^{-2}\quad\mathrm{kgm}^2 .
 \end{align*}
The following relations and initial conditions is applied for both cases for this equilibrium condition
\begin{gather}
q_{0}(0)=e_{1},\quad q_{1}(0)=q_{2}(0)=e_{3},\nonumber\\
x_{1}(0)=x_{0}(0)-l_{0}q_{0}(0)-l_{1}q_{1}(0),\nonumber\\
x_{2}(0)=x_{0}(0)+l_{0}q_{0}(0)-l_{1}q_{2}(0),\nonumber\\
R_{1}=R_{2}=I_{3},\nonumber\\ 
\Omega_{1}=\Omega_{2}=\Omega_{0}=0,\nonumber
\end{gather}
\subsubsection{Benchmark}
We choose this case as a comparison benchmark and it's not the result of the proposed controller in this paper. In this case, two quadrotors are hovering above the payload while cables are aligned to the vertical direction and we apply a disturbance to the payload. Here, the cables and payload dynamics are not considered into the control system and geometric nonlinear controller is used for each quadrotor to maintain quadrotors hovering at the fixed position and considers the forces applied to the quadrotors form the payload as disturbance. The payload is pulled by $30^{\circ}$ in the direction of $y$-axis of inertial frame and releases. The payload is oscillating bellow the quadrotors and forces applies to each quadrotor from the cables are just considered as disturbances to the quadrotors, so we experience large oscillations of the payload and cables.

\begin{figure}[h]
\centerline{
	\subfigure[Rod's actual and desired positions, $x_{0}$, $x_{0_{d}}$]{
		\includegraphics[width=0.5\columnwidth]{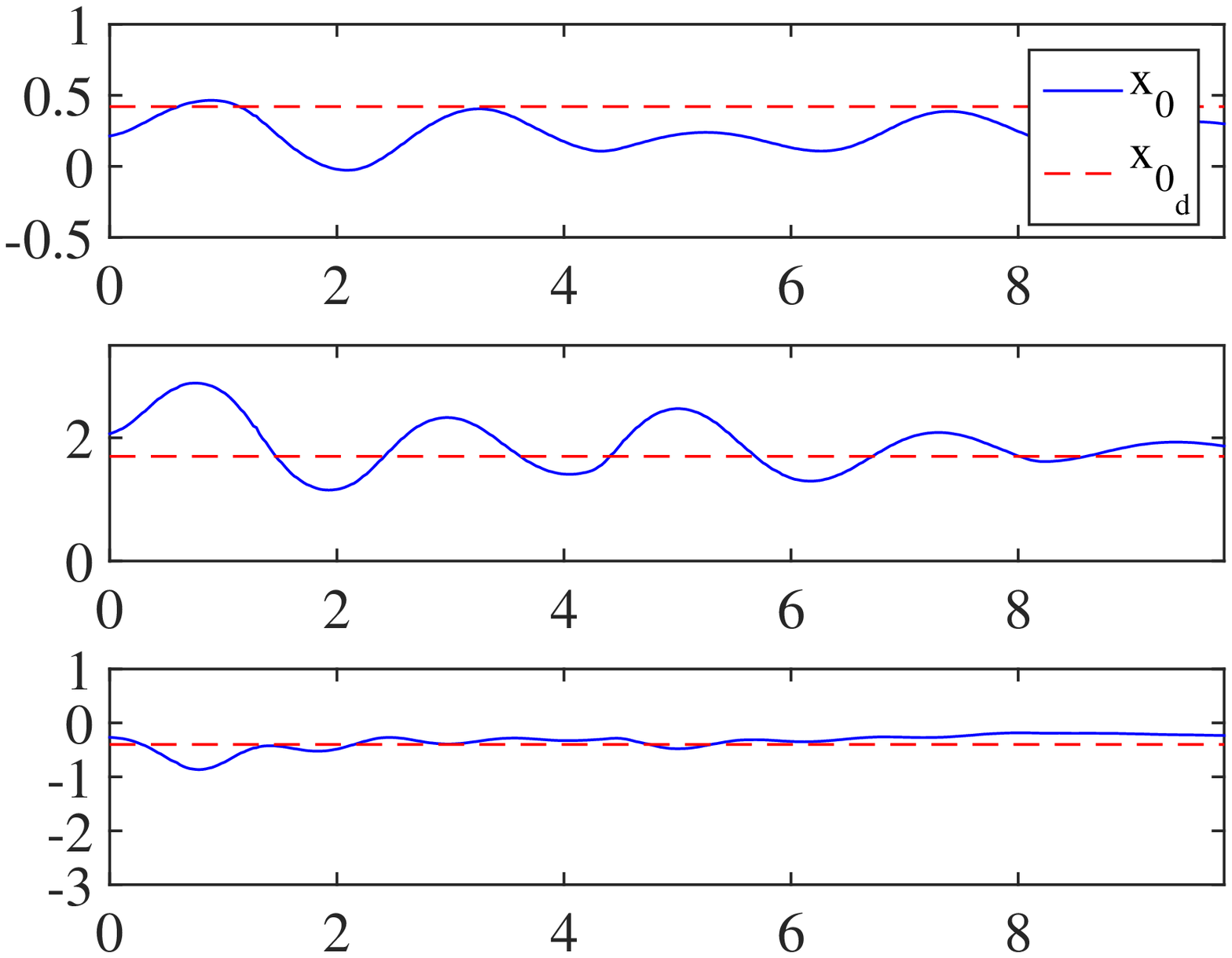}}
	\subfigure[Quadrotor's positions, $x_{1}$, $x_{2}$]{
		\includegraphics[width=0.5\columnwidth]{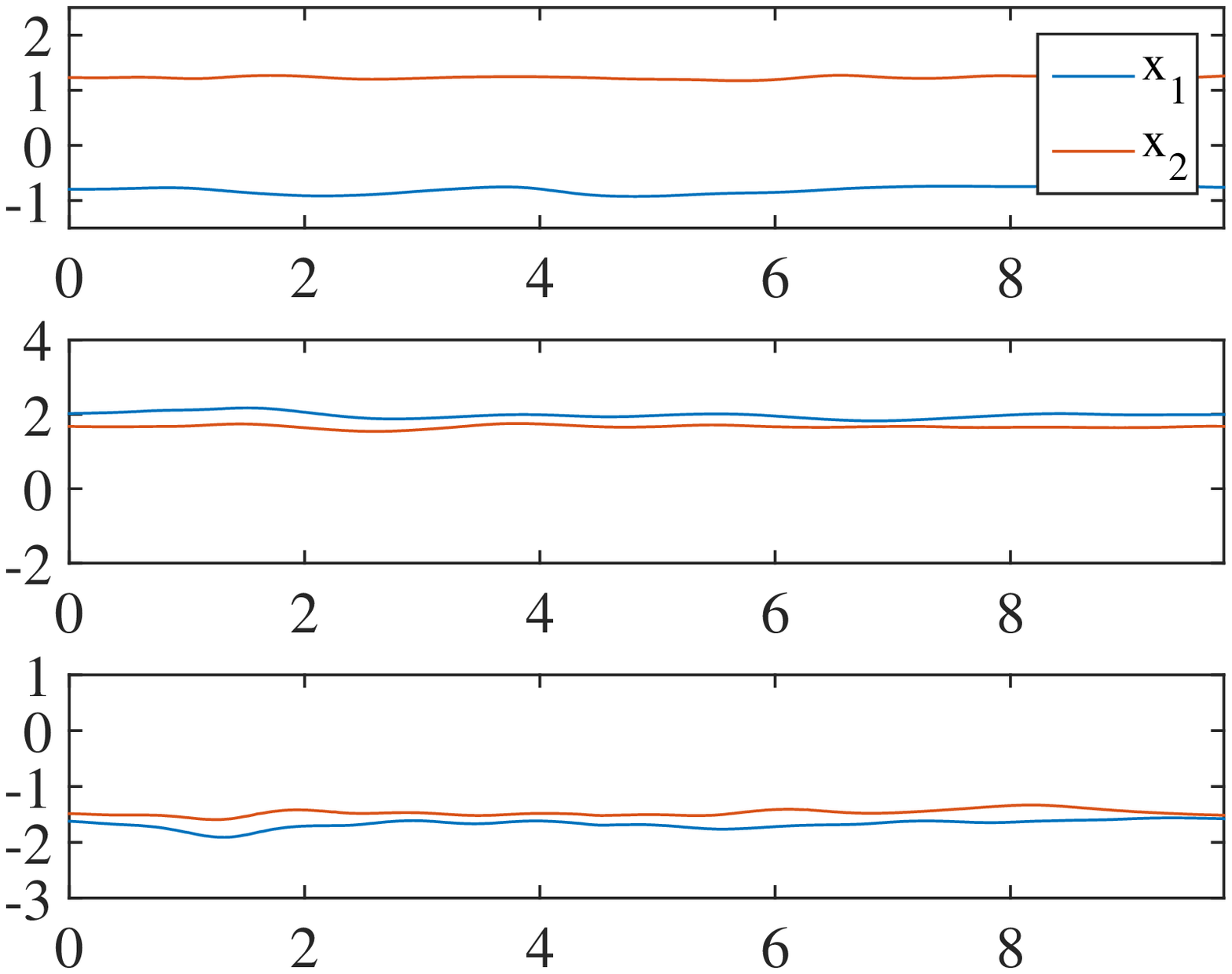}}
}
\centerline{
	\subfigure[First cable's direction, $q_{1}$]{
		\includegraphics[width=0.5\columnwidth]{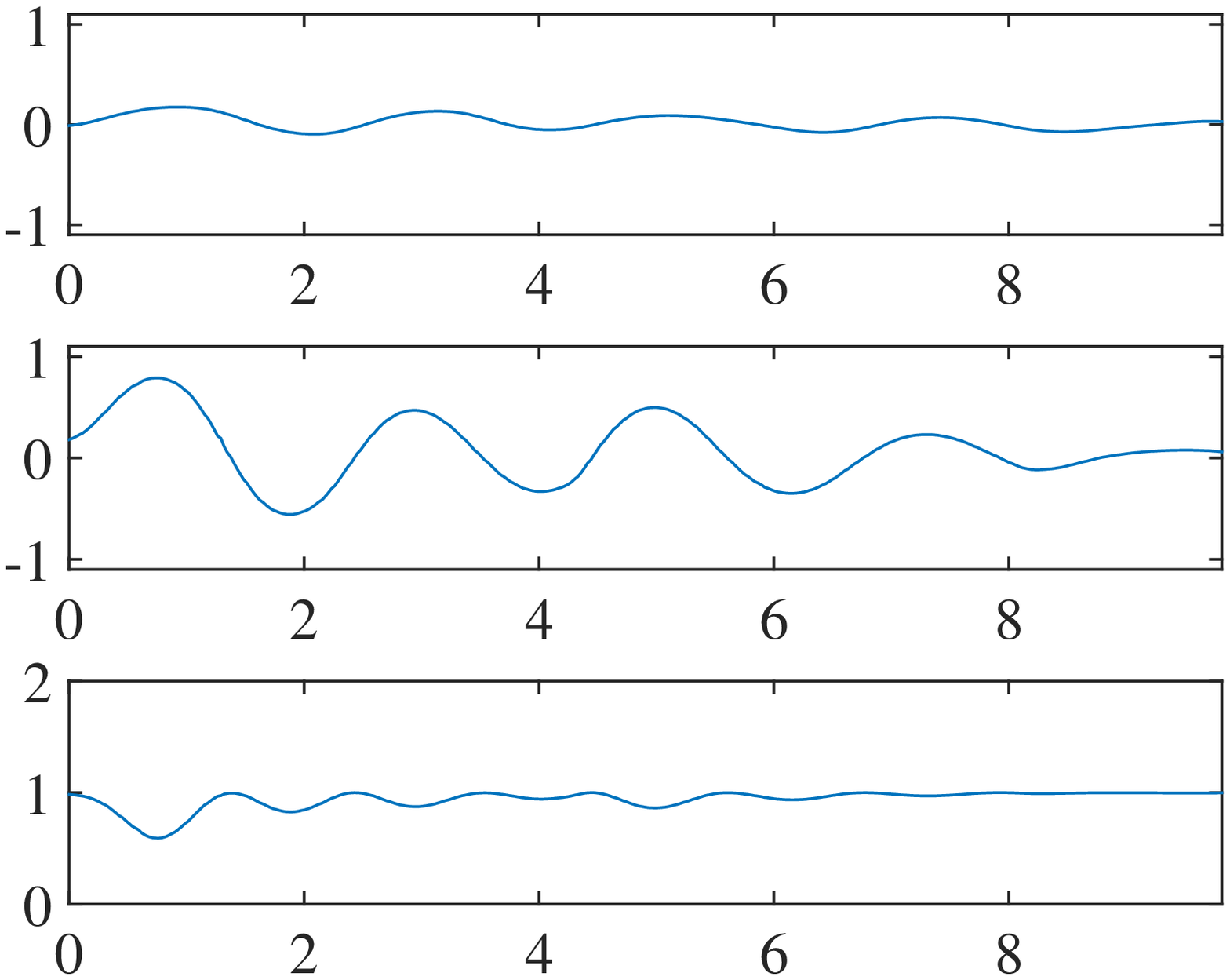}}
	\subfigure[Second cable's direction, $q_{2}$]{
		\includegraphics[width=0.5\columnwidth]{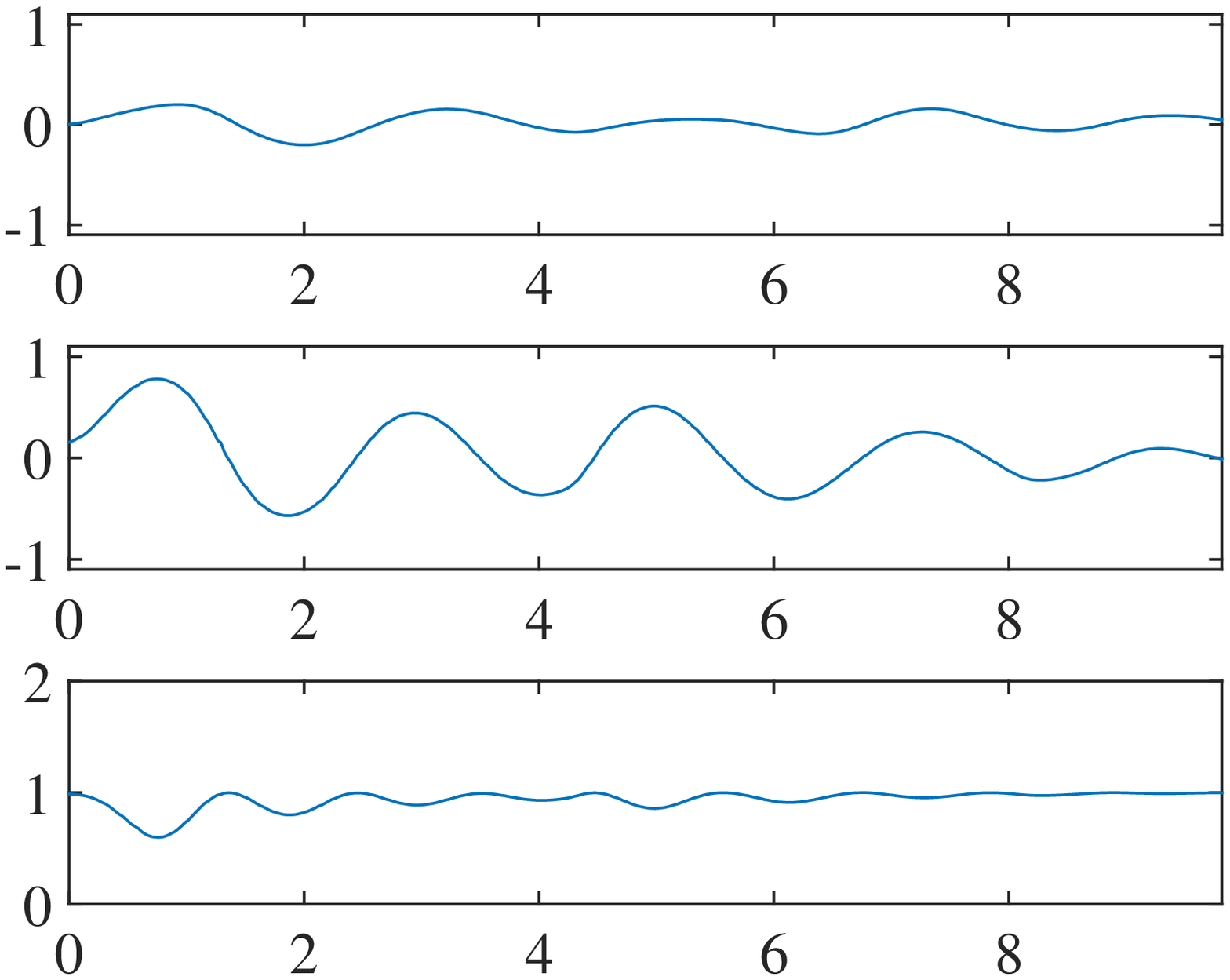}}
}
\caption{Benchmark: quadrotor position control system~\cite{Farhad2013}}\label{fig:expcase1}
\end{figure}

Numerical results for this experiment are presented in Figures~\ref{fig:expcase1} which presents the first and second's link directions, position of the payload and positions of the quadrotors during this test. 

\subsubsection{Proposed Dynamical System and Controller}
In this case, quadrotors are hovering at a fixed position using the geometric nonlinear controller while holding the payload. The payload is pulled with an external wire up to $30^\circ$ angle same as the first case and then releases. Then, the proposed controller is switched in to stabilize the system. In this scenario, dynamic of the cables and payload are considered into the control system and quadrotors cooperatively work to stabilize the payload to the desired fixed position while aligning the cables in the vertical directions and the payload in the desired direction of first inertial axis, $e_{1}$.

\begin{figure}[h]
\centerline{
	\subfigure[Rod's actual and desired positions, $x_{0}$, $x_{0_{d}}$]{
		\includegraphics[width=0.5\columnwidth]{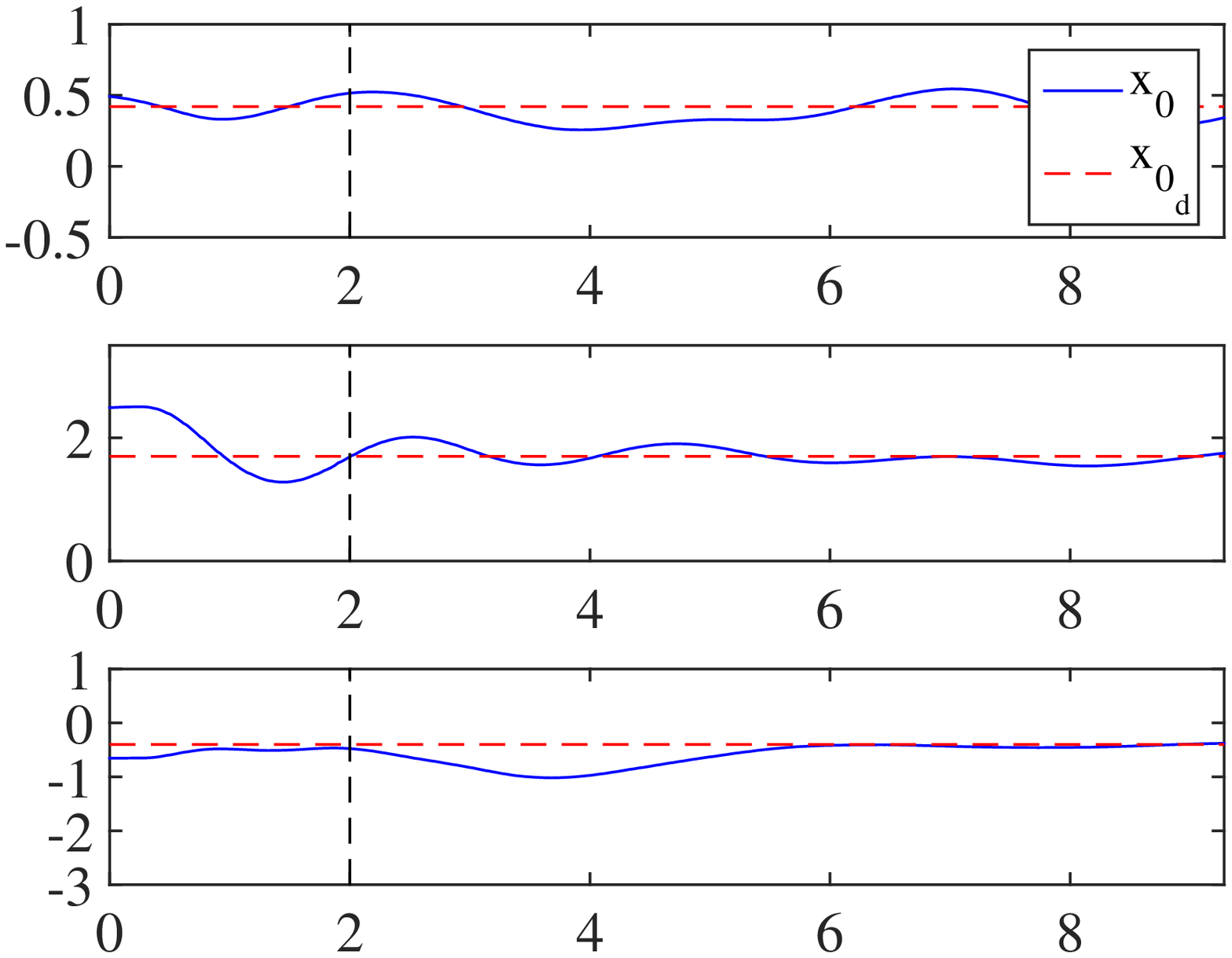}}
	\subfigure[Quadrotor's positions, $x_{1}$, $x_{2}$]{
		\includegraphics[width=0.5\columnwidth]{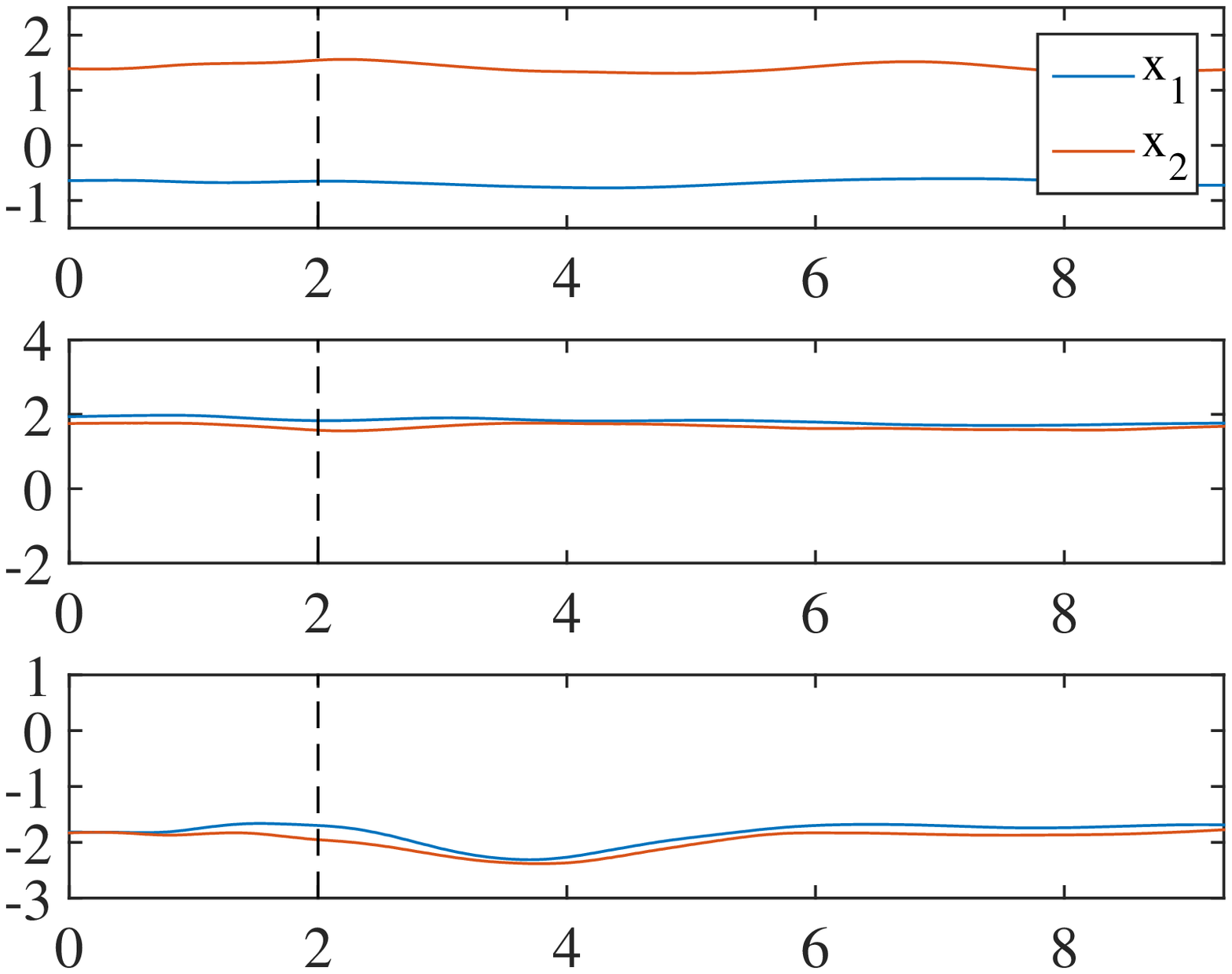}}
}
\centerline{
	\subfigure[First cable's direction, $q_{1}$]{
		\includegraphics[width=0.5\columnwidth]{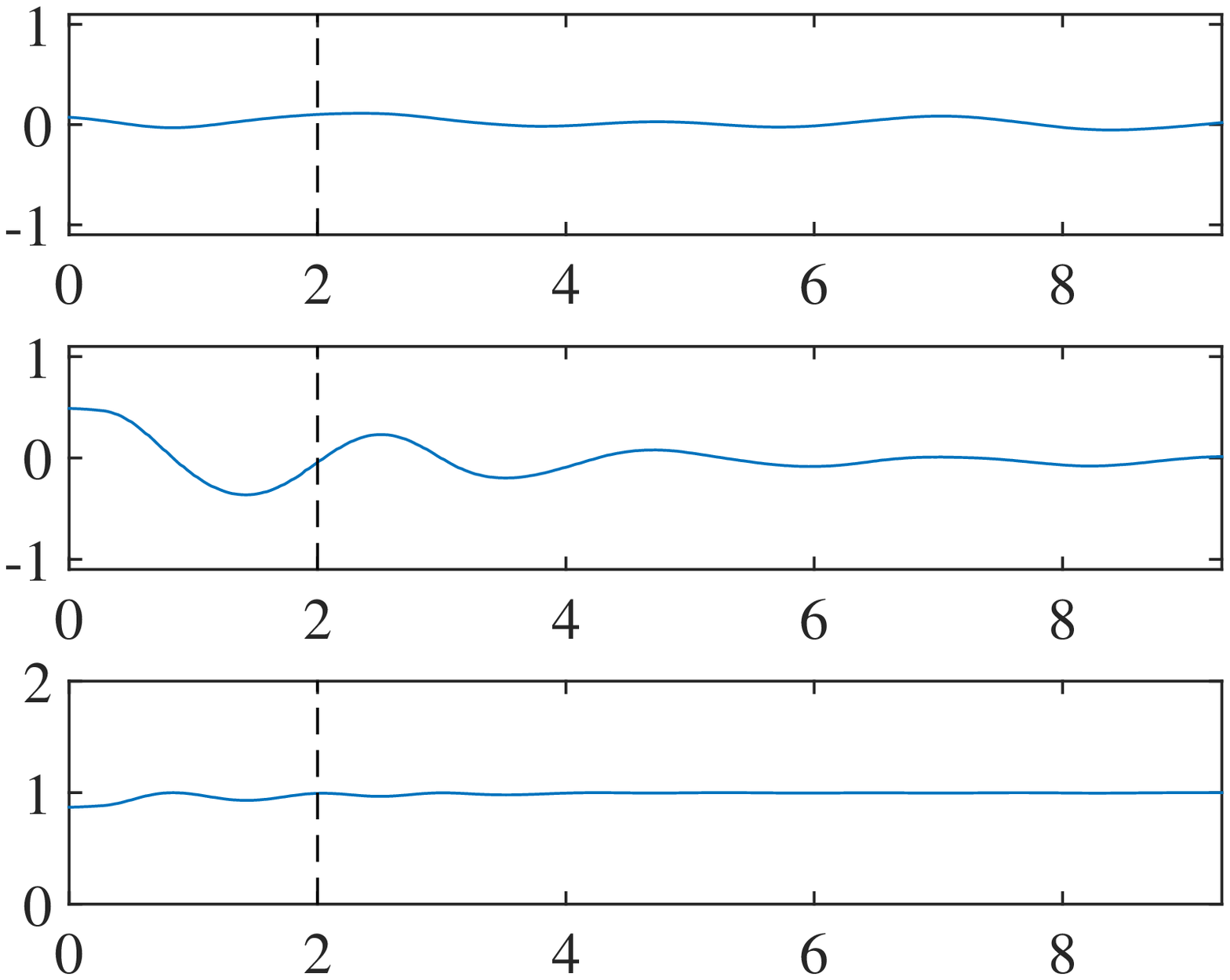}}
	\subfigure[Second cable's direction, $q_{2}$]{
		\includegraphics[width=0.5\columnwidth]{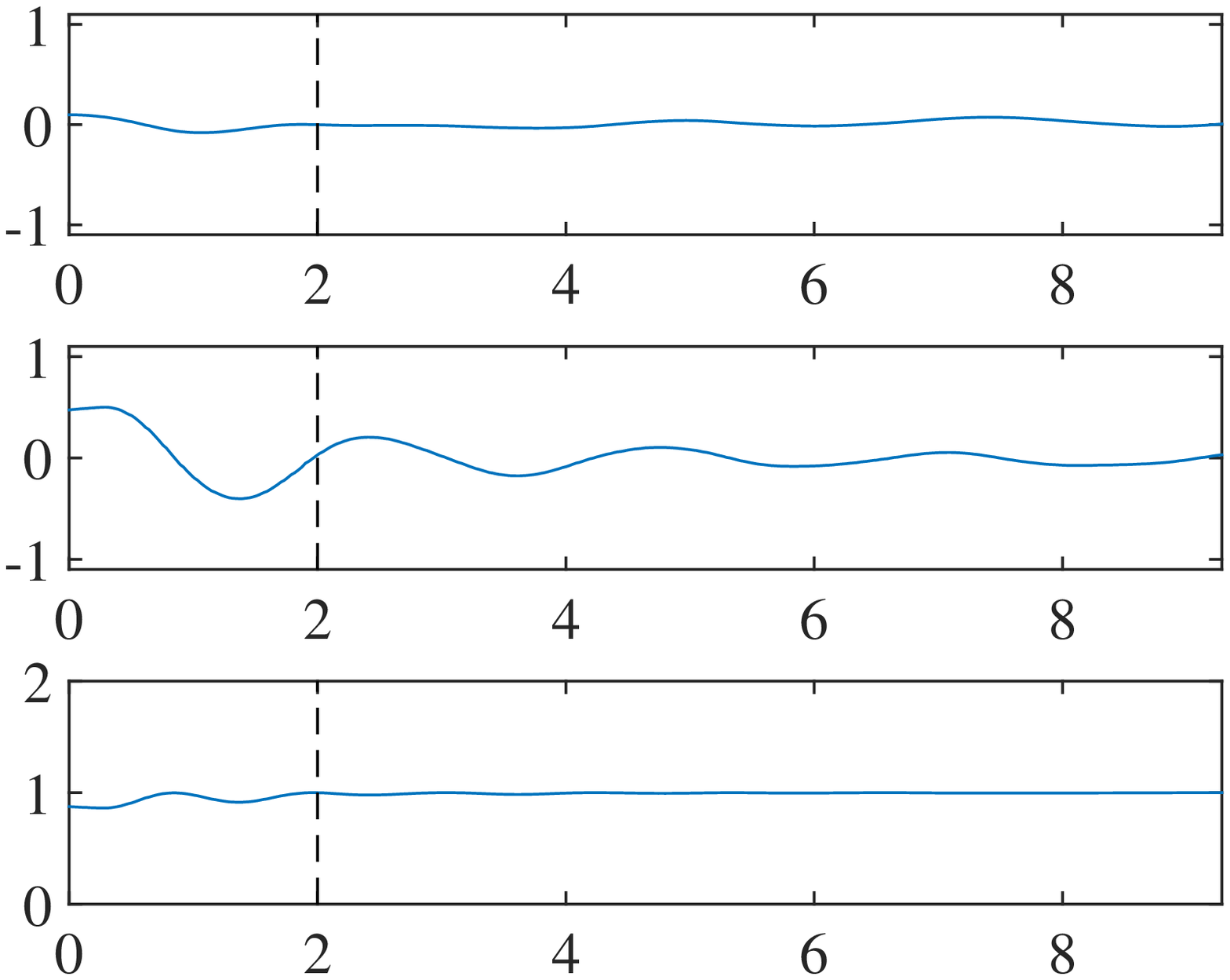}}
}
\caption{Proposed Controller: two quadrotors with rigid body payload. (The vertical dotted line indicates the time when controller switched in and stabilizes the system.)}\label{fig:expcase2}
\end{figure}

Figure~\ref{fig:expcase2} illustrates the position of the payload and quadrotors during this experiment where we applied the proposed controller. The vertical dotted line indicates the time when geometric nonlinear controller is switched with the proposed controller and stabilizes the system. 

As shown in this figure, the proposed controller reduces and eliminates the oscillations of the cables and payload much effectively while considering the payload and cables dynamics. The desired cables directions are along the vertical direction $e_{3}=[0,\; 0,\; 1]^{T}$ and the desired rod's directions is along the $e_{1}$ axis. 

\begin{figure}[h]
\centerline{
	\subfigure[]{
		\includegraphics[width=0.45\columnwidth]{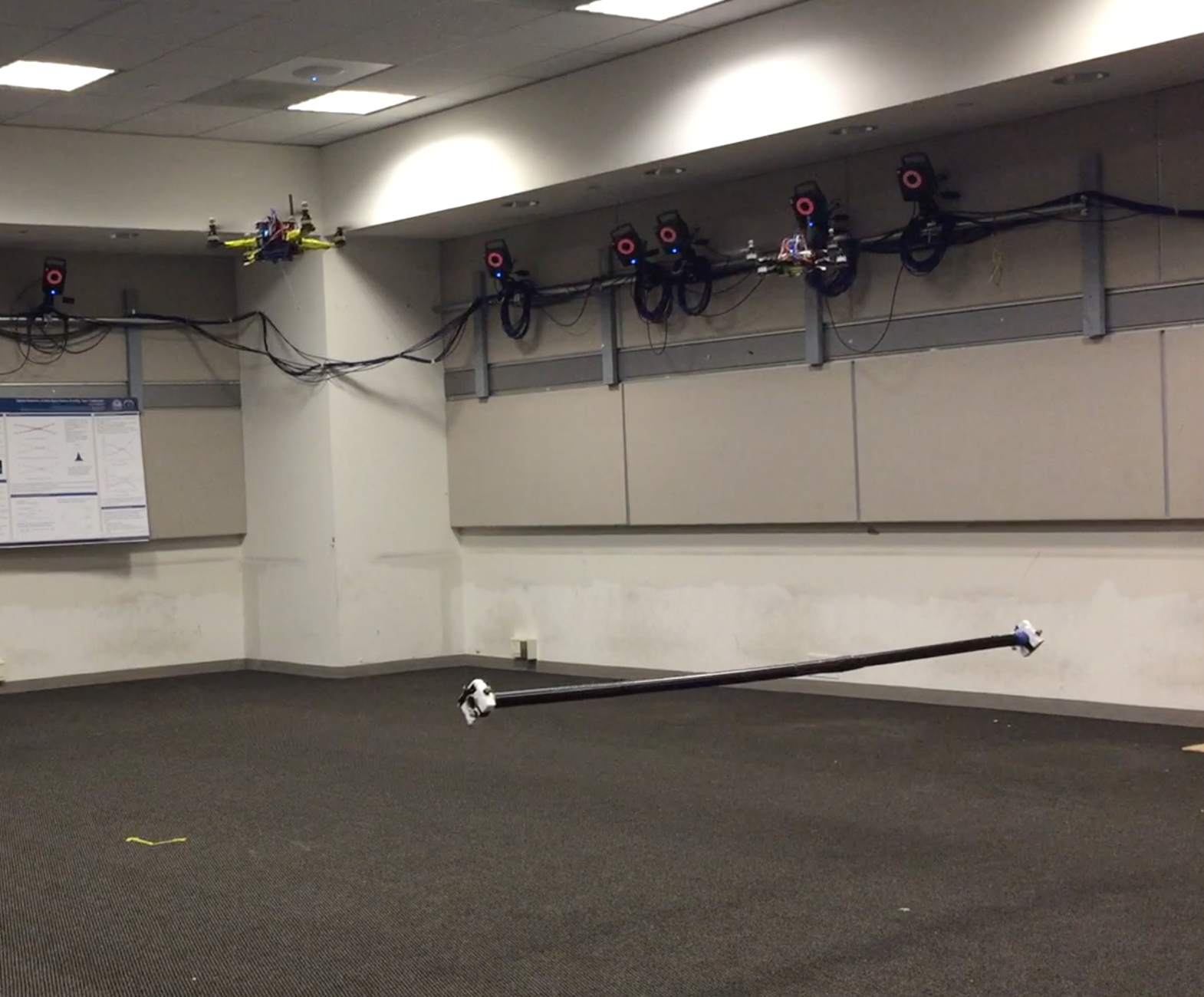}}
	\subfigure[]{
		\includegraphics[width=0.45\columnwidth]{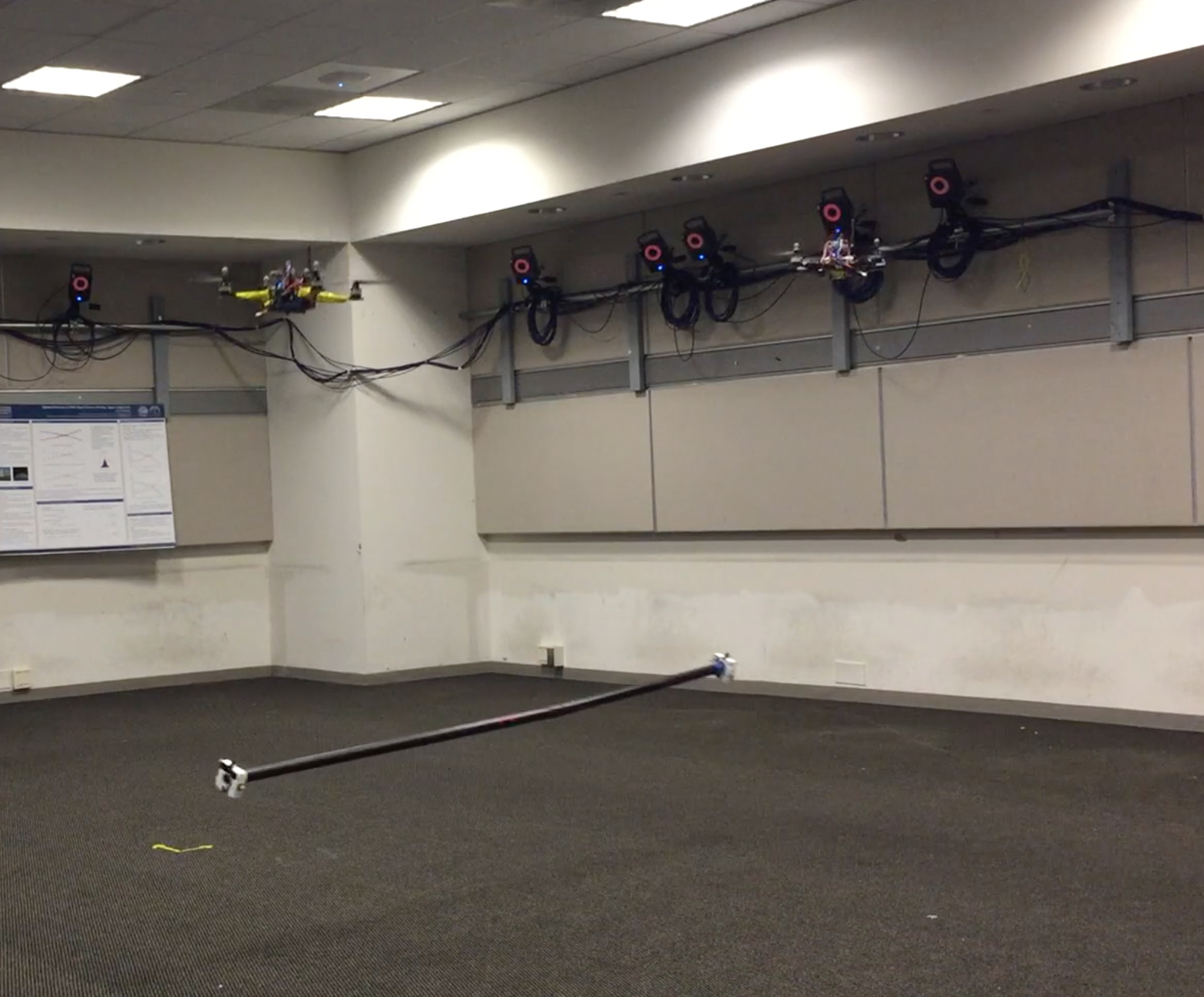}}
}
\centerline{
	\subfigure[]{
		\includegraphics[width=0.45\columnwidth]{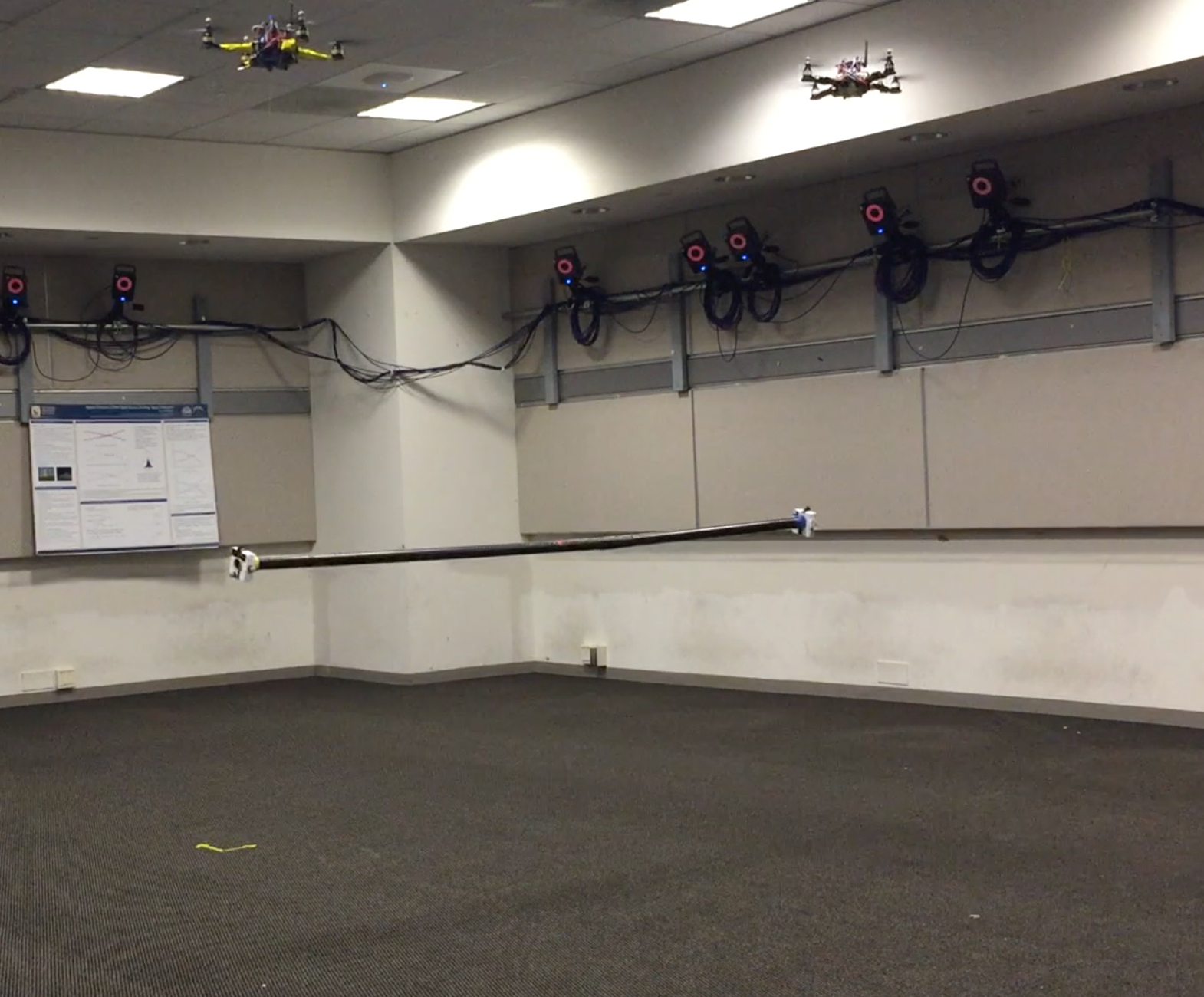}}
	\subfigure[]{
		\includegraphics[width=0.45\columnwidth]{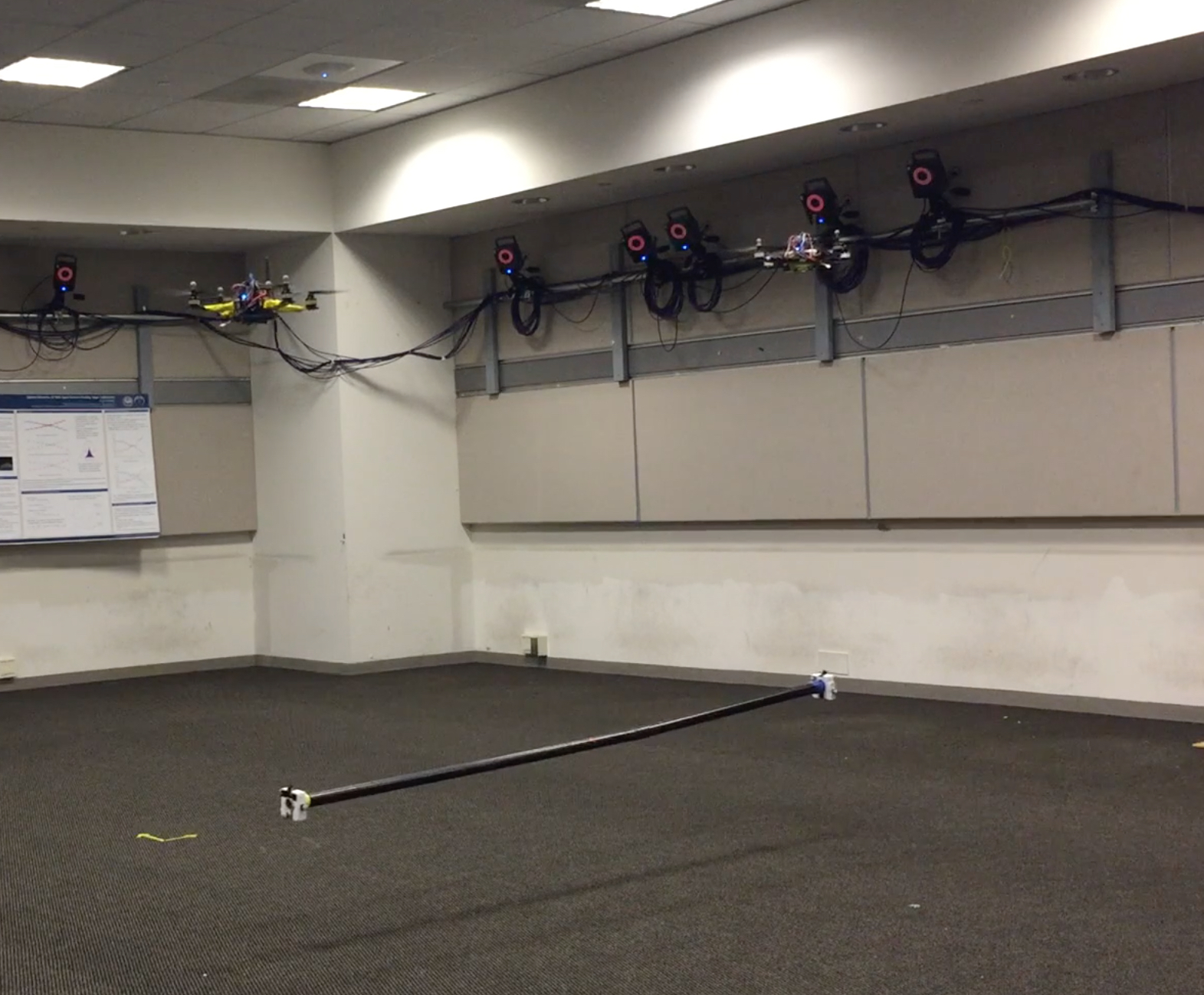}}
}
\caption{Snapshots of controlled stabilization of a rod with two quadrotors. A short video of this comparison is available at \href{https://www.youtube.com/watch?v=u65GqIl2skY}{https://www.youtube.com/watch?v=u65GqIl2skY}}\label{fig:expsnap}
\end{figure}
Snapshots of the controlled maneuvers is also illustrated at Figure \ref{fig:expsnap}. 
%%%%%%%%%%%%%%%%%%%%%%%%%%%%%%%%%%%%%%%%%%%%%%%%%%%%%%%%%%%%%%%%%%%%%%
\section{Conclusions}
We utilized Euler-Lagrange equations to derive the complete model of multiple quadrotor UAVs transporting a rigid-body connected via flexible cables in 3D space. These derivations are developed in a remarkably compact form which allow us to choose an arbitrary number and any configuration of the links and an arbitrary number of quadrotors. We developed a geometric nonlinear controller to stabilize the links below the quadrotors and payload in the equilibrium position from an initial condition. We expanded these derivations in such a way that there is no need of using local angle coordinates which is an advantageous technique to signalize our derivations. A rigorous Lyapunov stability analysis is also presented to illustrate the stability properties without any time-scale separation. Numerical simulation and experimental results and for multiple cooperative quadrotors stabilizing a rigid-body performed and presented in this paper and shows the the accuracy and performance of the purposed model and control system.

%%%%%%%%%%%%%%%%%%%%%%%%%%%%%%%%%%%%%%%%%%%%%%%%%%%%%%%%%%%%%%%%%%%%%%
\begin{acknowledgment}
This research has been supported in part by NSF under the grant CMMI-1243000 (transferred from 1029551), CMMI-1335008, and CNS-1337722.
\end{acknowledgment}
%%%%%%%%%%%%%%%%%%%%%%%%%%%%%%%%%%%%%%%%%%%%%%%%%%%%%%%%%%%%%%%%%%%%%%
\bibliographystyle{asmems4}
\bibliography{asme2e}
%%%%%%%%%%%%%%%%%%%%%%%%%%%%%%%%%%%%%%%%%%%%%%%%%%%%%%%%%%%%%%%%%%%%%%
\appendix
\section{Proof for Proposition \ref{prop:FDM}}\label{sec:PfFDM}
\subsection{Kinetic Energy}
The kinetic energy of the whole system is composed of the kinetic energy of quadrotors, cables and the rigid body, as
\begin{align}
T=&\frac{1}{2}m_{0}\|\dot{x}_{0}\|^{2}+\sum_{i=1}^{n}\sum_{j=1}^{n_{i}}{\frac{1}{2}m_{ij}\|\dot{x}_{ij}\|^{2}}+\frac{1}{2}\sum_{i=1}^{n}{m_{i}\|\dot{x}_{i}\|^{2}}\nonumber\\
&+\frac{1}{2}\sum_{i=1}^{n}{\Omega_{i}\cdot J_{i}\Omega_{i}}+\frac{1}{2}\Omega_{0}\cdot J_{0}\Omega_{0}.
\end{align}
Substituting the derivatives of \refeqn{xi} and \refeqn{xij} into the above expression we have
\begin{align}
T=&\frac{1}{2}m_{0}\|\dot{x}_{0}\|^{2}+\sum_{i=1}^{n}\sum_{j=1}^{n_{i}}{\frac{1}{2}m_{ij}\|\dot{x}_{0}+\dot{R}_{0}\rho_{i}-\sum_{a=j+1}^{n_{i}}{l_{ia}\dot{q}_{ia}}\|^{2}} \nonumber \\
&+\frac{1}{2}\sum_{i=1}^{n}{m_{i}\|\dot{x}_{0}+\dot{R}_{0}\rho_{i}-\sum_{a=1}^{n_{i}}{l_{ia}\dot{q}_{ia}}\|^{2}} \nonumber\\
&+\frac{1}{2}\sum_{i=1}^{n}{\Omega_{i}\cdot J_{i}\Omega_{i}}+\frac{1}{2}\Omega_{0}\cdot J_{0}\Omega_{0}.
\end{align}
We expand the above expression as follow
\begin{align}\label{eqn:kineticbs}
T=&\frac{1}{2}(m_{0}\|\dot{x}_{0}\|^{2}+\sum_{i=1}^{n}\sum_{j=1}^{n_{i}}{m_{ij}\|\dot{x}_{0}\|^{2}}+\sum_{i=1}^{n}{m_{i}\|\dot{x}_{0}\|^2}) \nonumber\\
&+\frac{1}{2}\sum_{i=1}^{n}(\sum_{j=1}^{n_{i}}{m_{ij}\|\dot{R}_{0}\rho_{i}\|^2}+m_{i}\|\dot{R}_{0}\rho_{i}\|^2) \nonumber\\
&+\sum_{i=1}^{n}(\sum_{j=1}^{n_{i}}{m_{ij}\dot{x}_{0}\cdot\dot{R}_{0}\rho_{i}}+m_{i}\dot{x}_{0}\cdot\dot{R}_{0}\rho_{i}) \nonumber\\
&+\frac{1}{2}\sum_{i=1}^{n}(\sum_{j=1}^{n_{i}}{m_{ij}\|\sum_{a=j+1}^{n_{i}}{l_{ia}\dot{q}_{ia}}\|^2}+{m_{i}\|\sum_{a=1}^{n_{i}}{l_{ia}\dot{q}_{ia}}\|^2}) \nonumber\\
&-\sum_{i=1}^{n}(\sum_{j=1}^{n_{i}}{m_{ij}\dot{x}_{0}}\cdot\sum_{a=j+1}^{n_{i}}{{l_{ia}\dot{q}_{ia}}}+\dot{x}_{0}\cdot\sum_{a=1}^{n_{i}}{l_{ia}\dot{q}_{ia}}) \nonumber\\
&-\sum_{i=1}^{n}(\sum_{j=1}^{n_{i}}{m_{ij}\dot{R}_{0}\rho_{i}}\cdot\sum_{a=j+1}^{n_{i}}{l_{ia}\dot{q}_{ia}}+m_{i}\dot{R}_{0}\rho_{i}\cdot\sum_{a=1}^{n_{i}}{l_{ia}\dot{q}_{ia}}) \nonumber\\
&+\frac{1}{2}\sum_{i=1}^{n}{\Omega_{i}\cdot J_{i}\Omega_{i}}+\frac{1}{2}\Omega_{0}\cdot J_{0}\Omega_{0},
\end{align}
and substituting \refeqn{def1}, \refeqn{def3}, it is rewritten as
\begin{align}\label{eqn:kinetic}
T=&\frac{1}{2}M_{T}\|\dot{x}_{0}\|^2+\frac{1}{2}\sum_{i=1}^{n}{M_{iT}\|\dot{R}_{0}\rho_{i}\|^{2}}+\sum_{i=1}^{n}({M_{iT}\dot{x}_{0}\cdot\dot{R}_{0}\rho_{i}}) \nonumber\\
&+\sum_{i=1}^{n}\sum_{j,k=1}^{n_{i}}{M_{0ij}l_{ik}\dot{q}_{ij}\cdot\dot{q}_{ik}}-\sum_{i=1}^{n}({\dot{x}_{0}\cdot\sum_{j=1}^{n_{i}}{M_{0ij}l_{ij}\dot{q}_{ij}}}) \nonumber\\
&-\sum_{i=1}^{n}({\dot{R}_{0}\rho_{i}}\cdot\sum_{j=1}^{n_{i}}{M_{0ij}l_{ij}\dot{q}_{ij}}) \nonumber\\
&+\frac{1}{2}\sum_{i=1}^{n}{\Omega_{i}\cdot J_{i}\Omega_{i}}+\frac{1}{2}\Omega_{0}\cdot J_{0}\Omega_{0}.
\end{align}
%%%%%%%%%%%%%%%%%%%%%%%%%%%%%%%%%%%%%%%%%%%%%%%%%%%%%%%%%%%%%%%
%%%%%%%%%%%%%%%%%%%%%%%%%%%%%%%%%%%%%%%%%%%%%%%%%%%%%%%%%%%%%%%
\subsection{Potential Energy}
We can derive the potential energy expression by considering the gravitational forces on each part of system as given
\begin{align}
V=-m_{0}ge_{3}\cdot x_{0}-\sum_{i=1}^{n}{m_{i}ge_{3}}\cdot x_{i}-\sum_{i=1}^{n}\sum_{j=1}^{n_{i}}{m_{ij}ge_{3}}\cdot x_{ij}.
\end{align}
Using \refeqn{xi} and \refeqn{xij}, we obtain
\begin{align}
V=&-m_{0}ge_{3}\cdot x_{0}-\sum_{i=1}^{n}{m_{i}ge_{3}}\cdot (x_{0}+R_{0}\rho_{i}-\sum_{a=1}^{n_{i}}{l_{ia}q_{ia}}) \nonumber\\
&-\sum_{i=1}^{n}\sum_{j=1}^{n_{i}}{m_{ij}ge_{3}}\cdot (x_{0}+R_{0}\rho_{i}-\sum_{a=j+1}^{n_{i}}{l_{ia}q_{ia}}),
\end{align}
and utilizing \refeqn{def3}, we can simplify the potential energy as
\begin{align}
V=-M_{T}ge_{3}\cdot x_{0}-\sum_{i=1}^{n}{M_{iT}ge_{3}\cdot R_{0}\rho_{i}}+\sum_{i=1}^{n}\sum_{j=1}^{n_{i}}{M_{0ij}l_{ij}q_{ij}\cdot e_{3}}.
\end{align}
%%%%%%%%%%%%%%%%%%%%%%%%%%%%%%%%%%%%%%%%%%%%%%%%%%%%%%%%%%%%%%%
%%%%%%%%%%%%%%%%%%%%%%%%%%%%%%%%%%%%%%%%%%%%%%%%%%%%%%%%%%%%%%%
\subsection{Derivatives of Lagrangian}
We develop the equation of motion for the Lagrangian $L=T-V$. The derivatives of the Lagrangian are given by
\begin{align}
&D_{\dot{x}_{0}}L=M_{T}\dot{x}_{0}+\sum_{i=1}^{n}{M_{iT}\dot{R}_{0}\rho_{i}}-\sum_{i=1}^{n}\sum_{j=1}^{n_{i}}{M_{0ij}l_{ij}\dot{q}_{ij}},\\
&D_{x_{0}}L=M_{T}ge_{3},\\
&D_{\dot{q}_{ij}}L=\sum_{i=1}^{n}\sum_{j=1}^{n_{i}}{M_{0ij}l_{ik}\dot{q}_{ik}}-\sum_{i=1}^{n}{M_{0ij}l_{ij}(\dot{x}_{0}}+{\dot{R}_{0}\rho_{i}}),\\
&D_{q_{ij}}L=-\sum_{i=1}^{n}{M_{0ij}l_{ij}e_{3}},
\end{align}
where $D_{\dot x_0}$ denote the derivative with respect to $\dot x_0$, and other derivatives are defined similarly. We also have
\begin{align}
D_{\Omega_{0}}L=&J_{0}\Omega_{0}+\sum_{i=1}^{n}{M_{iT}\hat{\rho}_{i}R_{0}^{T}\dot{x}_{0}},\nonumber\\
&-\sum_{i=1}^{n}\sum_{j=1}^{n_{i}}{M_{0ij}l_{ij}\hat{\rho}_{i}R_{0}^{T}\dot{q}_{ij}}-\sum_{i=1}^{n}{M_{iT}\hat{\rho}_{i}^{2}\Omega_{0}},
\end{align}
which can be rewritten as
\begin{align}
&D_{\Omega_{0}}L=\bar{J}_{0}\Omega_{0}+\sum_{i=1}^{n}{\hat{\rho}_{i}R_{0}^{T}(M_{iT}\dot{x}_{0}-\sum_{j=1}^{n_{i}}{M_{0ij}l_{ij}\dot{q}_{ij}})},
\end{align}
where $\bar{J}_{0}$ is defined as
\begin{align}
\bar{J}_{0}=J_{0}-\sum_{i=1}^{n}{M_{iT}\hat{\rho}_{i}^{2}}.
\end{align}
The derivative with respect ti $\Omega_{i}$ is simply given by
\begin{align}
D_{\Omega_{i}}L=\sum_{i=1}^{n}{J_{i}\Omega_{i}}.
\end{align}
The derivative of the Lagrangian with respect to $R_{0}$ along $\delta R_{0}=R_{0}\hat{\eta}_{0}$ is given by
\begin{align}
D_{R_{0}}L\cdot\delta R_{0}=&\sum_{i=1}^{n}{M_{iT}R_{0}\hat{\eta}_{0}\hat{\Omega}_{0}\rho_{i}\cdot\dot{x}_{0}}\nonumber\\
&-\sum_{i=1}^{n}{R_{0}\hat{\eta}_{0}\hat{\Omega}_{0}\rho_{i}\cdot\sum_{j=1}^{n_{i}}{M_{0ij}l_{ij}\dot{q}_{ij}}}\nonumber\\
&+\sum_{i=1}^{n}{M_{iT}ge_{3}\cdot R_{0}\hat{\eta}_{0}\rho_{i}},
\end{align}
which can be rewritten as
\begin{align}
&D_{R_{0}}L\cdot\delta R_{0}=d_{R_{0}}\cdot \eta_{0},
\end{align}
where
\begin{align}
d_{R_{0}}=\sum_{i=1}^{n}&(((\widehat{\hat{\Omega}_{0}\rho_{i}}R_{0}^{T}(M_{iT}\dot{x}_{0})-\sum_{j=1}^{n_{i}}{M_{0ij}l_{ij}\dot{q}_{ij}})\nonumber\\
&+M_{iT}g\hat{\rho}_{i}R_{0}^{T}e_{3})).
\end{align}
\subsection{Lagrange-d'Alembert Principle}
Consider $\mathfrak{G}=\int_{t_{0}}^{t_{f}}{L}$ be the action integral. Using the equations derived in previous section, the infinitesimal variation of the action integral can be~\cite{SchMurIICRA12} written as
\begin{align}
\delta \mathfrak{G}=&\int_{t_{0}}^{t_{f}}D_{\dot{x}_{0}}L\cdot\delta\dot{x}_{0}+D_{x_{0}}\cdot\delta x_{0} \nonumber\\
&+D_{\Omega_{0}}L(\dot{\eta}_{0}+\Omega_{0}\times \eta_{0})+d_{R_{0}}L\cdot\eta_{0} \nonumber\\
&+\sum_{i=1}^{n}\sum_{j=1}^{n_{i}}{D_{\dot{q}_{ij}}L(\dot{\xi}_{ij}\times q_{ij}+\xi_{ij}\times\dot{q}_{ij})}\nonumber\\
&+\sum_{i=1}^{n}\sum_{j=1}^{n_{i}}{D_{q_{ij}}L\cdot(\xi_{ij}\times q_{ij})} \nonumber\\
&+\sum_{i=1}^{n}{D_{\Omega_{i}}L\cdot(\dot{\eta}_{i}+\Omega_{i}\times\eta_{i})}.
\end{align}
The total thrust at the $i$-th quadrotor with respect to the inertial frame is denoted by $u_{i}=-f_{i}R_{i}e_{3}\in\Re^{3}$ and the total moment at the $i$-th quadrotor is defined as $M_{i}\in\Re^{3}$. The corresponding virtual work due to the controls and disturbances is given by
\begin{align}
\delta W=\int_{t_{0}}^{t_{f}}&{\sum_{i=1}^{n}{(u_{i}+\Delta_{x_i})\cdot\{\delta x_{0}+R_{0}\hat{\eta}_{0}\rho_{i}-\sum_{j=1}^{n_{i}}{l_{ij}\dot{\xi}_{ij}\times q_{ij}}}}\} \nonumber\\
&+(M_{i}+\Delta_{R_i})\cdot \eta_{i}\; dt.
\end{align}
According to Lagrange-d Alembert principle, we have $\delta \mathfrak{G}=-\delta W$ for any variation of trajectories with fixes end points. By using integration by parts and rearranging, we obtain the following Euler-Lagrange equations
\begin{gather}
\frac{d}{dt}D_{\dot{x}_{i}}L-D_{x_{0}}L=\sum_{i=1}^{n}{(u_{i}+\Delta_{x_i})},\\
\frac{d}{dt}D_{\Omega_{0}}+\Omega_{0}\times D_{\Omega_{0}}-d_{R_{0}}=\sum_{i=1}^{n}{\hat{\rho}_{i}R_{0}^{T}(u_{i}+\Delta_{x_i})},\\
\hat{q}_{ij}\frac{d}{dt}D_{\dot{q}_{ij}}L-\hat{q}_{ij}D_{q_{i}}L=-l_{ij}\hat{q}_{ij}(u_{i}+\Delta_{x_i}),\\
\frac{d}{dt}D_{\Omega_{i}}L+\Omega_{i}\times D_{\Omega_{i}}L=M_{i}+\Delta_{R_i}.
\end{gather}
Substituting the derivatives of Lagrangians into the above expression and rearranging, the equations of motion are given by \refeqn{EOMM1}, \refeqn{EOMM2}, \refeqn{EOMM3}, \refeqn{EOMM4}.

\section{Proof for Proposition \ref{prop:stability1}}\label{sec:P1stability}
The variations of $x$ and $q$ are given by \refeqn{xlin} and \refeqn{qlin}. From the kinematics equation $\dot q_{ij}=\omega_{ij}\times q_{ij}$ and
\begin{align*}
\delta \dot q_{ij} = \dot\xi_{ij} \times e_3 =\delta\omega_{ij} \times e_3 + 0\times (\xi_{ij}\times e_3)=\delta\omega_{ij} \times e_3.
\end{align*}
Since both sides of the above equation is perpendicular to $e_3$, this is equivalent to $e_3\times(\dot\xi_{ij}\times e_3) = e_3\times(\delta\omega_{ij}\times e_3)$, which yields
\begin{gather*}
\dot \xi_{ij} - (e_3\cdot\dot\xi_{ij}) e_3 = \delta\omega_{ij} -(e_3\cdot\delta\omega_{ij})e_3.
\end{gather*}
Since $\xi_{ij}\cdot e_3 =0$, we have $\dot\xi_{ij}\cdot e_3=0$. As $e_3\cdot\delta\omega_{ij}=0$ from the constraint, we obtain the linearized equation for the kinematics equation of the link as
\begin{align}
\dot\xi_{ij} = \delta\omega_{ij}.\label{eqn:dotxii}
\end{align}
The infinitesimal variation of $R_{0}\in\SO$ in terms of the exponential map
\begin{align}
\delta R_{0} = \frac{d}{d\epsilon}\bigg|_{\epsilon = 0} R_{0}\exp (\epsilon \hat\eta_{0}) = R_{0}\hat\eta_{0},\label{eqn:delR0}
\end{align}
for $\eta_{0}\in\Re^3$. Substituting these into \refeqn{EOMM1}, \refeqn{EOMM2}, and \refeqn{EOMM3}, and ignoring the higher order terms, we obtain the following sets of linearized equations of motion 
\begin{gather}
M_{T}\delta \ddot{x}_{0}-\sum_{i=1}^{n}{M_{iT}\hat{\rho}_{i}}\delta\dot{\Omega}_{0}\nonumber\\
+\sum_{i=1}^{n}\sum_{j=1}^{n_{i}}M_{0ij}l_{ij}\hat{e}_{3}C(C^{T}\ddot{\xi}_{ij})=\sum_{i=1}^{n}{\delta u_{i}}\\
\sum_{i=1}^{n}{M_{iT}\hat{\rho}_{i}\delta\ddot{x}_{0}}+\bar{J}_{0}\delta\dot{\Omega}_{0}+\sum_{i=1}^{n}\sum_{j=1}^{n_{i}}M_{0ij}l_{ij}\hat{\rho}_{i}\hat{e}_{3}C(C^{T}\ddot{\xi}_{ij})\nonumber\\
+\sum_{i=1}^{n}\frac{m_{0}}{n}g\hat{\rho}_{i}\hat{e}_{3}\eta_{0}=\sum_{i=1}^{n}{\hat{\rho}_{i}\delta u_{i}},\\
-M_{0ij}C^{T}\hat{e}_{3}\delta\ddot{x}_{0}+M_{0ij}C^{T}\hat{e}_{3}\hat{\rho}_{i}\delta\dot{\Omega}_{0}+\sum_{k=1}^{n_{i}}{M_{0ij}l_{ik}I_{2}(C^{T}\ddot{\xi}_{ij})}\nonumber\\
=-C^{T}\hat{e}_{3}\delta u_{i}+(-M_{iT}-\frac{m_{0}}{n}+M_{0ij})ge_{3} I_{2}(C^{T}\xi_{ij}),\\
\dot{\eta}_{i}=\delta\Omega_{i},\quad \dot{\eta}_{0}=\delta\Omega_{0},\quad J_{i}\delta\Omega_{i}=\delta M_{i},
\end{gather}
which can be written in a matrix form as presented in \refeqn{EOMLin}. We used $C^{T}\hat{e}_{3}^{2}C=-I_{2}$ to simplify these derivations. 

\section{Proof for Proposition \ref{prop:stability}}\label{sec:Pstabilityddd}
We first show stability of the rotational dynamics of each quadrotor, and later it is combined with the stability analysis for the remaining parts.
\subsection{Attitude Error Dynamics}
Here, the attitude error dynamics for $e_{R_{i}}$, $e_{\Omega_{i}}$ are derived and we find conditions on control parameters to guarantee the stability. The time-derivative of $J_{i}e_{\Omega_{i}}$ can be written as
\begin{align}
J_{i}\dot e_{\Omega_{i}} & = \{J_{i}e_{\Omega_{i}} + d_{i}\}^\wedge e_{\Omega_{i}} - k_R e_{R_{i}}-k_\Omega e_{\Omega_{i}}-k_{I}e_{I_{i}}+\Delta_{R_{i}},\label{eqn:JeWdot}
\end{align}
where $d_{i}=(2J_{i}-\trs{[J_{i}]I})R_{i}^TR_{i_{d}}\Omega_{i_d}\in\Re^3$~\cite{Farhad2013}. The important property is that the first term of the right hand side is normal to $e_{\Omega_{i}}$, and it simplifies the subsequent Lyapunov analysis.

\subsection{Stability for Attitude Dynamics}
Define a configuration error function on $\SO$ as follows
\begin{align}
\Psi_{i}= \frac{1}{2}\trs{I- R_{{i}_c}^T R_{i}}.
\end{align}
We introduce the following Lyapunov function
\begin{align}
\mathcal{V}_{2}=\sum_{i=1}^{n}{\mathcal{V}_{2_{i}}},
\end{align}
where 
\begin{align}
\mathcal{V}_{2_i}=&\frac{1}{2}e_{\Omega_{i}}\cdot J_{i}\dot{e}_{\Omega_{i}}+k_{R}\Psi_{i}(R_{i},R_{d_{i}})+c_{2_i}e_{R_{i}}\cdot e_{\Omega_{i}}\\
&+\frac{1}{2}k_{I}\|e_{I_{i}}-\frac{\Delta_{R_{i}}}{k_{I}}\|^{2}.
\end{align}
Consider a domain $D_{2}$ given by
\begin{align}
D_2 = \{ (R_{i},\Omega_{i})\in \SO\times\Re^3\,|\, \Psi_{i}(R_{i},R_{d_{i}})<\psi_{2_i}<2\}.\label{eqn:D2}
\end{align}
In this domain we can show that $\mathcal{V}_{2}$ is bounded as follows~\cite{Farhad2013}
\begin{align}\label{eqn:ffff1}
\begin{split}
z_{2_i}^{T}M_{i_{21}}z_{2_i}+&\frac{k_{I}}{2}\|e_{I_{i}}-\frac{\Delta_{R_i}}{k_{I}}\|^{2}\leq\mathcal{V}_{2_i}\\
&\leq z_{2_i}^{T}M_{i_{22}}z_{2_i}+\frac{k_{I}}{2}\|e_{I_{i}}-\frac{\Delta_{R_i}}{k_{I}}\|^{2},
\end{split}
\end{align}
where $z_{2_i}=[\|e_{R_{i}}\|,\|e_{\Omega_{i}}\|]^{T}\in \Re^{2}$ and matrices $M_{i_{21}}$, $M_{i_{22}}\in\Re^{2\times 2}$ are given by
\begin{align}
M_{i_{21}}=&\frac{1}{2}\begin{bmatrix}
k_{R}&-c_{2_i}\lambda_{M_{i}}\\
-c_{2_i}\lambda_{M_{i}}&\lambda_{m_{i}}
\end{bmatrix},\nonumber\\
M_{i_{22}}=&\frac{1}{2}\begin{bmatrix}
\frac{2k_{R}}{2-\psi_{2_i}}&c_{2_i}\lambda_{M_{i}}\\
c_{2_i}\lambda_{M_{i}}&\lambda_{M_{i}}\nonumber
\end{bmatrix}.
\end{align}
The time derivative of $\mathcal{V}_2$ along the solution of the controlled system is given by
\begin{align*}
\dot{\mathcal{V}}_2 =&\sum_{i=1}^{n}-k_\Omega\|e_{\Omega_{i}}\|^2 -e_{\Omega_{i}}\cdot(k_{I}e_{I_i}-\Delta_{R_{i}})\\
&+ c_{2_i} \dot e_{R_{i}} \cdot J_{i}e_{\Omega_{i}}+ c_{2_i} e_{R_{i}} \cdot J_{i}\dot e_{\Omega_{i}}+(k_{I}e_{I_i}-\Delta_{R_i})\dot{e}_{I_i}.
\end{align*}
We have $\dot{e}_{I_i}=c_{2}e_{R_i}+e_{\Omega_{i}}$. Substituting \refeqn{JeWdot}, the above equation becomes
\begin{align*}
\dot{\mathcal{V}}_2 =&
\sum_{i=1}^{n}-k_\Omega\|e_{\Omega_{i}}\|^2  + c_{2_i} \dot e_{R_{i}} \cdot J_{i}e_{\Omega_{i}}-c_{2_i} k_R \|e_{R_{i}}\|^2 \\
&+ c_{2_i} e_{R_{i}} \cdot ((J_{i}e_{\Omega_{i}}+d_{i})^\wedge e_{\Omega_{i}} -k_\Omega e_{\Omega_{i}}).
\end{align*}
We have $\|e_{R_{i}}\|\leq 1$, $\|\dot e_{R_{i}}\|\leq \|e_{\Omega_{i}}\|$~\cite{TFJCHTLeeHG}, and choose a constant $B_{2_i}$ such that $\|d_{i}\|\leq B_{i_2}$. Then we obtain
\begin{align}
\dot{\mathcal{V}}_2 \leq -\sum_{i=1}^{n} z_{2_i}^T W_{2_i} z_{2_i},\label{eqn:dotV2}
\end{align}
where the matrix $W_{2_i}\in\Re^{2\times 2}$ is given by
\begin{align*}
W_{2_i} = \begin{bmatrix} c_{2_i}k_R & -\frac{c_{2_i}}{2}(k_\Omega+B_{2_i}) \\ 
-\frac{c_{2_i}}{2}(k_\Omega+B_{2_i}) & k_\Omega-2c_{2_i}\lambda_{M_{i}} \end{bmatrix}.
\end{align*}
The matrix $W_{2_i}$ is a positive definite matrix if 
\begin{align}\label{eqn:c2}
c_{2_i}<\min\{\frac{\sqrt{k_{R}\lambda_{m_i}}}{\lambda_{M_i}},\frac{4k_{\Omega}}{8k_{R}\lambda_{M_i}+(k_{\Omega}+B_{i_2})^{2}} \}.
\end{align}
This implies that
\begin{align}\label{eqn:eq2}
\dot{\mathcal{V}}_{2}\leq -\sum_{i=1}^{n} {\lambda_{m}(W_{2_i})\|z_{2_i}\|^{2}},
\end{align} 
which shows stability of the attitude dynamics of quadrotors.
%%%%%%%%%%%%%%%%%%%%%%%%%%%%%%%%%%%%%%%%
\subsection{Error Dynamics of the Payload and Links}
We derive the tracking error dynamics and a Lyapunov function for the translational dynamics of a payload and the dynamics of links. Later it is combined with the stability analyses of the rotational dynamics. From \refeqn{EOMM1}, \refeqn{EOMLin}, \refeqn{Ai}, and \refeqn{fi}, the equation of motion for the controlled dynamic model is given by
\begin{align}\label{eqn:salam}
\Mb\ddot \xb  + \Gb\xb=\Bb(u-u_{d})+\g(\xb,\dot{\xb})+\Bb\Delta_{x},
\end{align}
where $\Delta_x\in\Re^{3n\times 1}$ is
\begin{align}
\Delta_x=\begin{bmatrix}
\Delta_{x_1} & \Delta_{x_2} &\cdots & \Delta_{x_n}
\end{bmatrix}^{T},
\end{align}
and
\begin{align}
u=\begin{bmatrix}
u_{1}\\
u_{2}\\
\vdots\\
u_{n}
\end{bmatrix},\; u_{d}=\begin{bmatrix}
-(M_{1T}+\frac{m_{0}}{n})ge_{3}\\
-(M_{2T}+\frac{m_{0}}{n})ge_{3}\\
\vdots\\
-(M_{nT}+\frac{m_{0}}{n})ge_{3}
\end{bmatrix},
\end{align}
and $\g(\xb,\dot{\xb})$ corresponds to the higher order terms. As $u_i=-f_i R_i e_3$ for the full dynamic model, $\delta u=u-u_{d}$ is given by
\begin{align}\label{eqn:deltauu}
\delta u=
\begin{bmatrix}
-f_{1}R_{1}e_{3}+(M_{1T}+\frac{m_{0}}{n})ge_{3}\\
-f_{2}R_{2}e_{3}+(M_{2T}+\frac{m_{0}}{n})ge_{3}\\
\vdots\\
-f_{n}R_{n}e_{3}+(M_{nT}+\frac{m_{0}}{n})ge_{3}
\end{bmatrix}.
\end{align}
The subsequent analyses are developed in the domain $D_{1}$
\begin{align}
D_1=\{&(\xb,\dot{\xb},R_i,e_{\Omega_i})\in\Re^{D_{\xb}}\times\Re^{D_{\xb}}\times \SO\times\Re^3\,|\,\nonumber\\
& \Psi_{i}< \psi_{1_i} < 1\}.\label{eqn:D}
\end{align}
In the domain $D_{1}$, we can show that 
\begin{align}
\frac{1}{2} \norm{e_{R_{i}}}^2 \leq  \Psi_i(R_i,R_{c_i}) \leq \frac{1}{2-\psi_{1_i}} \norm{e_{R_i}}^2\label{eqn:eRPsi1}.
\end{align}
Consider the quantity $e_{3}^{T}R_{c_i}^{T}R_{i}e_{3}$, which represents the cosine of the angle between $b_{3_i}=R_{i}e_{3}$ and $b_{3_{c_i}}=R_{c_i}e_{3}$. Since $1-\Psi_i(R_i,R_{c_i})$ represents the cosine of the eigen-axis rotation angle between $R_{c_i}$ and $R_i$, we have $e_{3}^{T}R_{c_i}^{T}Re_{3}\geq 1-\Psi_i(R_i,R_{c_i})>0$ in $D_{1}$. Therefore, the quantity $\frac{1}{e_{3}^{T}R_{c_i}^{T}R_i e_{3}}$ is well-defined. We add and subtract $\frac{f_i}{e_{3}^{T}R_{c_i}^{T}R_i e_{3}}R_{c_i}e_{3}$ to the right hand side of \refeqn{deltauu} to obtain
\begin{align}\label{eqn:hallaa}
\delta u=
\begin{bmatrix}
\frac{-f_1}{e_{3}^{T}R_{c_1}^{T}R_1 e_{3}}R_{c_1}e_{3}-X_1+(M_{1T}+\frac{m_{0}}{n})ge_{3}\\
\frac{-f_2}{e_{3}^{T}R_{c_2}^{T}R_2 e_{3}}R_{c_2}e_{3}-X_2+(M_{2T}+\frac{m_{0}}{n})ge_{3}\\
\vdots\\
\frac{-f_n}{e_{3}^{T}R_{c_n}^{T}R_n e_{3}}R_{c_n}e_{3}-X_n+(M_{nT}+\frac{m_{0}}{n})ge_{3}
\end{bmatrix}.
\end{align}
where $X_i \in \Re^{3}$ is defined by
\begin{align}\label{eqn:Xdef}
X_i=\frac{f_i}{e_{3}^{T}R_{c_i}^{T}R_i e_{3}}((e_{3}^{T}R_{c_i}^{T}R_i e_{3})R_i e_{3}-R_{c_i}e_{3}).
\end{align}
Using 
\begin{align}
-\frac{f_i}{e_{3}^{T}R_{c_i}^{T}R_i e_{3}}R_{c_i}e_{3}=-\frac{(\|A_i\|R_{c_i}e_{3})\cdot R_i e_{3}}{e_{3}^{T}R_{c_i}^{T}R_i e_{3}}\cdot -\frac{A_i}{\|A_i\|}=A_i,
\end{align}
the equation \refeqn{hallaa} becomes
\begin{align}
\delta u=
\begin{bmatrix}
A_{1}-X_1+(M_{1T}+\frac{m_{0}}{n})ge_{3}\\
A_{2}-X_2+(M_{2T}+\frac{m_{0}}{n})ge_{3}\\
\vdots\\
A_{n}-X_n+(M_{nT}+\frac{m_{0}}{n})ge_{3}
\end{bmatrix}.
\end{align}
Substituting \refeqn{Ai} into the above equation, \refeqn{salam} becomes
\begin{align}
\Mb\ddot \xb  + \Gb\xb=\Bb(-K_{\xb}\xb-K_{\dot{\xb}}\dot{\xb}-X-K_{z}\sat_{\sigma}(e_{\xb})+\Delta_x)+\g(\xb,\dot{\xb}),
\end{align}
where $X=[X_{1}^T,\; X_{2}^T,\; \cdots,\; X_{n}^T]^{T}\in\Re^{3n}$. This can be rearranged as
\begin{align}
\begin{split}
\ddot{\xb}=&-(\Mb^{-1}\Gb+\Mb^{-1}\Bb K_{x})\xb-(\Mb^{-1}\Bb K_{\dot{x}})\dot{\xb}\\
&-\Mb^{-1}\Bb X-\Mb^{-1}\Bb K_{z}\sat_{\sigma}(e_{\xb})+\Mb^{-1}\g(\xb,\dot{\xb})+\Mb^{-1}\Bb\Delta_{x}.
\end{split}
\end{align}
Using the definitions for $\mathds{A}$, $\mathds{B}$, and $z_{1}$ presented before, the above expression can be rearranged as
\begin{align}\label{eqn:zdot1}
\dot{z}_{1}=&\mathds{A} z_{1}+\mathds{B}(-\Bb X+\g(\xb,\dot{\xb})-\Bb K_{z}\sat_{\sigma}(e_{\xb})+\Bb\Delta_{x}).
\end{align}

%%%%%%%%%%%%%%%%%%%%%%%%%%%%%%%%%%%%%%%%%%%%%%%%%%%%%
\subsection{Lyapunov Candidate for Simplified Dynamics}
From the linearized control system developed at section 3, we use matrix $P$ to introduce the following Lyapunov candidate for translational dynamics
\begin{align}
\mathcal{V}_{1}=z_{1}^{T}Pz_{1}+2\int_{p_{eq}}^{e_{\xb}}{(\Bb K_{z}\satr_{\sigma}(\mu)-\Bb\Delta_{x})}\cdot d \mu.
\end{align}
The last integral term of the above equation is positive definite about the equilibrium point $e_{\xb}=p_{eq}$ where
\begin{align}
p_{eq}=[\frac{\Delta_{x}}{k_{z}},0,0,\cdots],
\end{align}
if $\delta< k_z\sigma$, considering the fact that $\sat_{\sigma}{y}=y$ if $y<\sigma$. The time derivative of the Lyapunov function using the Leibniz integral rule is given by
\begin{align}\label{eqn:devrr}
\dot{\mathcal{V}_{1}}=\dot{z}_{1}^{T}Pz_{1}+z_{1}^{T}P\dot{z}_{1}+2\dot{e}_{\xb}\cdot(\Bb K_{z}\satr_{\sigma}(e_{\xb})-\Bb\Delta_{x}).
\end{align}
Since $\dot{e}_{\xb}^{T}=((P\mathds{B})^{T}z_{1})^{T}=z_{1}^{T}P\mathds{B}$ from \refeqn{exterm}, the above expression can be written as
\begin{align}\label{eqn:devvcv}
\dot{\mathcal{V}_{1}}=\dot{z}_{1}^{T}Pz_{1}+z_{1}^{T}P\dot{z}_{1}+2z_{1}^{T}P\mathds{B}(\Bb K_{z}\satr_{\sigma}(e_{\xb})-\Bb\Delta_{x}).
\end{align}
Substituting \refeqn{zdot1} into \refeqn{devvcv}, it reduces to
\begin{align}\label{eqn:beforsimp}
\dot{\mathcal{V}}_{1}=z_{1}^{T}(\mathds{A}^{T}P+P\mathds{A})z_{1}+2z_{1}^{T}P\mathds{B}(-\Bb X+\g(\xb,\dot{\xb})).
\end{align}
Let $c_{3}=2\|P\mathds{B}\Bb\|_{2}\in\Re$ and using $\mathds{A}^{T}P+P\mathds{A}=-Q$, we have
\begin{align}\label{eqn:test}
\dot{\mathcal{V}}_{1}\leq-z_{1}^{T}Qz_{1}+c_{3}\|z_{1}\|\|X\|+2z_{1}^{T}P\mathds{B}\g(\xb,\dot{\xb}).
\end{align}
The second term on the right hand side of the above equation corresponds to the effects of the attitude tracking error on the translational dynamics. We find a bound of $X_{i}$, defined at \refeqn{Xdef}, to show stability of the coupled translational dynamics and rotational dynamics in the subsequent Lyapunov analysis. Since 
\begin{align}
f_{i}=\|A_{i}\|(e_{3}^{T}R_{c_i}^{T}R_i e_{3}), 
\end{align}
we have
\begin{align}\label{eqn:ssstr}
\|X_i\|\leq\|A_i\|\|(e_{3}^{T}R_{c_i}^{T}R_i e_{3})R_i e_{3}-R_{c_i}e_{3}\|.
\end{align}
The last term $\|(e_{3}^{T}R_{c_i}^{T}R_i e_{3})R_i e_{3}-R_{c_i}e_{3}\|$ represents the sine of the angle between $b_{3_i}=R_i e_{3}$ and $b_{3_{c_i}}=R_{c_i}e_{3}$, since $(b_{3_{c_i}}\cdot b_{3_i})b_{3_i}-b_{3_{c_i}}=b_{3_i}\times(b_{3_i}\times b_{3_{c_i}})$. The magnitude of the attitude error vector, $\|e_{R_i}\|$ represents the sine of the eigen-axis rotation angle between $R_{c_i}$ and $R_i$. Therefore, $\|(e_{3}^{T}R_{c_i}^{T}R_i e_{3})R_i e_{3}-R_{c_i}e_{3}\|\leq\|e_{R_i}\|$ in $D_{1}$. It follows that
\begin{align}
\|(e_{3}^{T}R_{c_i}^{T}R_i e_{3})R_i e_{3}-R_{c_i}e_{3}\|&\leq\|e_{R_i}\|=\sqrt{\Psi_i(2-\Psi_i)}\nonumber\\
&\leq\{\sqrt{\psi_{1_i}(2-\psi_{1_i})}\triangleq\alpha_i\}<1,
\end{align}
therefore
\begin{align}
\|X_i\|&\leq \|A_i\|\|e_{R_i}\|\nonumber\\
&\leq\|A_i\|\alpha_i.
\end{align}
We find an upper boundary for
\begin{align}\label{eqn:AA}
A_i=-K_{\xb} \xb-K_{\dot{\xb}}\dot\xb -K_{z}\satr_{\sigma}(e_{\xb})+u_{i_{d}}.
\end{align}
We define $K_{max}, K_{z_{m}}\in\Re$
\begin{align*}
&K_{\max}=\max\{\|K_{\xb}\|,\|K_{\dot{\xb}}\|\}, \\
&K_{z_{m}}=\|K_{z}\|,
\end{align*}
by defining $\|u_{i_{d}}\|\leq B_{1_i}$, the upper bound of $A_i$ is given by
\begin{align}
\|A_i\| & \leq K_{\max}(\|\xb\|+\|\dot{\xb}\|)+\sigma K_{z_{m}}+B_{1_i}\\
&\leq 2K_{\max}\|z_{1}\|+(B_{1_i}+\sigma K_{z_{m}}),\label{eqn:normA}
\end{align}
Using the above steps we can show that
\begin{align}
\|X\|&\leq \sum_{i=1}^{n}((2K_{\max}\|z_{1}\|+(B_{1_i}+\sigma K_{z_{m}}))\|e_{R_{i}}\|)\nonumber\\
&\leq (2K_{\max}\|z_{1}\|+(B_{1_i}+\sigma K_{z_{m}}))\alpha,
\end{align}
where $\alpha=\sum_{i=1}^{n}\alpha_{i}$. Then, we can simplify \refeqn{test} as 
\begin{align}\label{eqn:eq1}
\dot{\mathcal{V}}_{1} \leq& -(\lambda_{\min}(Q)-2c_{3}K_{\max}\alpha) \|z_{1}\| ^{2}\nonumber\\
&+\sum_{i=1}^{n}+c_{3}(B_{1_i}+\sigma K_{z_{m}})\|z_{1}\|\|e_{R_i}\|+2z_{1}^{T}P\mathds{B}\g(\xb,\dot{\xb}).
\end{align}
%%%%%%%%%%%%%%%%%%%%%%%%%%%%%%%%%%%%%%%%
\subsection{Lyapunov Candidate for the Complete System}
Let $\mathcal{V}=\mathcal{V}_{1}+\mathcal{V}_{2}$ be the Lyapunov function for the complete system. The time derivative of $\mathcal{V}$ is given by
\begin{align}
\dot{\mathcal{V}}=\dot{\mathcal{V}}_{1}+\dot{\mathcal{V}}_{2}.
\end{align}
Substituting \refeqn{eq1} and \refeqn{eq2} into the above equation
\begin{align}
\dot{\mathcal{V}}\leq& -(\lambda_{\min}(Q)-2c_{3}K_{\max}\alpha) \|z_{1}\| ^{2}+2z_{1}^{T}P\mathds{B}\g(\xb,\dot{\xb})\nonumber\\
&+\sum_{i=1}^{n}+c_{3}(B_{1_i}+\sigma K_{z_{m}})\|z_{1}\|\|e_{R_i}\|-\sum_{i=1}^{n}\lambda_{m}(W_{2_i})\|z_{2_i}\|^{2},
\end{align}
and using $\|e_{R_{i}}\|\leq \|z_{2_i}\|$, it can be written as
\begin{align}\label{eqn:finalsimp}
\dot{\mathcal{V}}\leq& -(\lambda_{\min}(Q)-2c_{3}K_{\max}\alpha) \|z_{1}\| ^{2}+2z_{1}^{T}P\mathds{B}\g(\xb,\dot{\xb})\nonumber\\
&+\sum_{i=1}^{n}c_{3}(B_{1_i}+\sigma K_{z_{m}}) \|z_{1}\|\|z_{2_i}\|-\sum_{i=1}^{n}\lambda_{m}(W_{2_i})\|z_{2_i}\|^{2}.
\end{align}
The $2z_{1}^{T}P\mathds{B}\g(\xb,\dot{\xb})$ term in the above equation is indefinite. But, the function $\g(\xb,\dot{\xb})$ satisfies
\begin{align*}
\frac{\|\g(\xb,\dot{\xb})\|}{\|z_{1}\|}\rightarrow 0\quad \mbox{as}\quad \|z_{1}\|\rightarrow 0.
\end{align*}
Then, for any $\gamma>0$ there exists $r>0$ such that
\begin{align*}
\|\g(\xb,\dot{\xb})\|<\gamma\|z_{1}\|\quad \forall\|z_{1}\|<r.
\end{align*}
Therefore
\begin{align}
2z_{1}^{T}P\mathds{B}\g(\xb,\dot{\xb})\leq 2\gamma\|P\|_{2}\|z_{1}\|^{2}.
\end{align}
Substituting the above inequality into \refeqn{finalsimp}
\begin{align}
\dot{\mathcal{V}}\leq& -(\lambda_{\min}(Q)-2c_{3}K_{\max}\alpha) \|z_{1}\| ^{2}+2\gamma\|P\|_{2}\|z_{1}\|^{2}\nonumber\\
&+\sum_{i=1}^{n}c_{3}(B_{1_i}+\sigma K_{z_{m}}) \|z_{1}\|\|z_{2_i}\|-\sum_{i=1}^{n}\lambda_{m}(W_{2_i})\|z_{2_i}\|^{2},
\end{align}
and rearranging
\begin{align}
\dot{\mathcal{V}}\leq -\sum_{i=1}^{n}(&\frac{\lambda_{\min}(Q)-2c_{3}K_{\max}\alpha}{n} \|z_{1}\| ^{2}\nonumber\\
&-c_{3}(B_{1_i}+\sigma K_{z_{m}}) \|z_{1}\|\|z_{2_i}\|+\lambda_{m}(W_{2_i})\|z_{2_i}\|^{2})\nonumber\\
&+2\gamma\|P\|_{2}\|z_{1}\|^{2},
\end{align}
we obtain
\begin{align}
\dot{\mathcal{V}}\leq-\sum_{i=1}^{n}(\zb_{i}^{T}W_{i}\zb_{i})+2\gamma\|P\|_{2}\|z_{1}\|^{2},
\end{align}
where $\zb_{i}=[\|z_{1}\|,\|z_{2_{i}}\|]^{T}\in\Re^{2}$ and
\begin{align}
W_i=\begin{bmatrix}
\frac{\lambda_{\min}(Q)-2c_{3}(B_{1_i}+\sigma K_{z_{m}})\alpha}{n}&-\frac{c_{3}B_{1_i}}{2}\\
-\frac{c_{3}(B_{1_i}+\sigma K_{z_{m}})}{2}&\lambda_{m}(W_{2_i})
\end{bmatrix}.
\end{align}
By using $\|z_{1}\|\leq\|\zb_i\|$, we obtain
\begin{align}
\dot{\mathcal{V}}\leq -\sum_{i=1}^{n}(\lambda_{\min}(W_i)-\frac{2\gamma\|P\|_{2}}{n})\|\zb_{i}\|^{2}.
\end{align}
Choosing $\gamma<n(\lambda_{\min}(W_i))/2\|P\|_{2}$, and
\begin{align}
\lambda_{m}(W_{2_i})>\frac{n\|\frac{c_{3}(B_{1_i}+\sigma K_{z_{m}})}{2}\|^2}{\lambda_{\min}(Q)-2c_{3}K_{\max}\alpha},
\end{align}
ensures that $\dot{\mathcal{V}}$ is negative semi-definite. This implies that the zero equilibrium of tracking errors is stable in the sense of Lyapunov and $\mathcal{V}$ is non-increasing. Therefore all of error variables $z_{1}$, $z_{2_i}$ and integral control terms $e_{I_i}$, $e_{\xb}$ are uniformly bounded. Also, from Lasalle-Yoshizawa theorem~\cite[Thm 3.4]{Kha96}, we have $\zb_i\rightarrow 0$ as $t\rightarrow \infty$.
\end{document}